\newtheorem{theorem}{Theorem}[section]
\newtheorem{lemma}[theorem]{Lemma}
\theoremstyle{definition}
\newtheorem{definition}[theorem]{Definition}
 \newtheorem{proposition}[theorem]{Proposition}
\newtheorem{corollary}[theorem]{Corollary}
\theoremstyle{remark}
\newtheorem{remark}[theorem]{Remark}
\newcommand{\R}{\mathbb{R}}
\numberwithin{equation}{section}
\begin{document}

\title[Blowup solutions for the shadow Gierer-Meinhardt system]{Diffusion-induced blowup solutions for the shadow limit model of a singular Gierer-Meinhardt  system}

 \author{G. K. DUONG}
\address{Department of Mathematics, New York University in Abu Dhabi, Saadiyat Insland. P. O. Box 129188, Abu Dhabi, United Arab Emirates }

  \author{N. I. Kavallaris }
  \address{Department of Mathematical and Physical Sciences,
Faculty of Science  and Engineering, University of Chester, Thornton Science Park, Pool Lane, Ince, CH2 4NU, Chester, UK  }
\author{H.  Zaag}
\address{LAGA,  Sorbonne Paris Nord University,  CNRS (UMR 7539), F- 93430, Villetaneuse, France}


\subjclass[2000]{Primary 54C40, 14E20; Secondary 46E25, 20C20}

\bigskip
\begin{center}\thanks{\today}\end{center}

\keywords{Diffusion driven blowup,  blowup profile,  Gierer-Meinhardt model, shadow system, Turing instability}

\maketitle
 
 \textit{Abstract:} 
 In the current paper,  we  provide a thorough investigation of the blowing up behaviour induced via diffusion of the solution of the following  non local problem
 \begin{equation*}
 \left\{\begin{array}{rcl}
   \partial_t u    &=&  \Delta u  -  u + \displaystyle{\frac{u^p}{ \left(\mathop{\,\rlap{-}\!\!\int}\nolimits_\Omega    u^r dx  \right)^\gamma }}\quad\text{in}\quad   \Omega \times  (0,T), \\[0.2cm]
\frac{ \partial u}{ \partial \nu}  & = &  0   \text{ on  }    \Gamma =  \partial \Omega \times  (0,T),\\
u(0) & = & u_0,
\end{array} \right.
\end{equation*}
where $\Omega$ is a bounded domain in $\R^N$ with smooth boundary $\partial \Omega;$ such problem is derived  as the shadow limit of a singular Gierer-Meinhardt system, cf. \cite{KSN17, NKMI2018}.  Under the Turing type condition
$$    \frac{r}{p-1} < \frac{N}{2},  \gamma r \ne p-1, p >1  $$
we construct a solution   which blows up in finite time and  only  at an interior point $x_0$  of  $\Omega,$ i.e.  
$$ u(x_0, t) \sim   (\theta^*)^{-\frac{1}{p-1}} \left[\kappa (T-t)^{-\frac{1}{p-1}} \right], $$
where 
$$ \theta^* := \lim_{t \to T} \left(\mathop{\,\rlap{-}\!\!\int}\nolimits_\Omega    u^r dx \right)^{- \gamma} \text{ and } \kappa = (p-1)^{-\frac{1}{p-1}}.    $$
More precisely, we also give a description on the  final asymptotic profile  at the blowup point
$$   u(x,T)  \sim   ( \theta^* )^{-\frac{1}{p-1}}  \left[  \frac{(p-1)^2}{8p} \frac{|x-x_0|^2}{ |\ln|x-x_0||}  \right]^{ -\frac{1}{p-1}}   \text{ as }  x  \to 0, $$
and thus we unveil the form of  the Turing patterns occurring in that case due to driven-diffusion instability.

The applied technique for the  construction of the preceding blowing up solution  mainly relies  on the approach developed in   \cite{MZnon97} and  \cite{DZM3AS19}.       
  
\newpage
\tableofcontents

\newpage

\section{Introduction}\label{section-intro}
In as early as 1952, A. Turing in his seminal paper \cite{TPTRS1952}
attempted, by using reaction-diffusion systems, to model the phenomenon of morphogenesis,
the regeneration of tissue structures in hydra, an animal of a few millimeters in length
made up of approximately 100,000 cells. Further observations on the morphogenesis in
hydra led to the assumption of the existence of two chemical substances (morphogens), a
slowly diffusing (short-range) activator and a rapidly diffusing (long-range) inhibitor. A.
Turing, in \cite{TPTRS1952}, indicates that although diffusion has a smoothing and trivializing effect
on a single chemical, for the case of the interaction of two or more chemicals different
diffusion rates could force the uniform steady states of the corresponding reaction-diffusion
systems to become unstable and to lead to non-homogeneous distributions of such
reactants. Since then, such a phenomenon is known as Turing-type instability or
diffusion-driven instability (DDI).
Scrutinizing  Turing’s idea further, Gierer and Meinhardt \cite{GMK1972}, proposed in 1972 the following
activator–inhibitor system to model the
regeneration phenomenon of hydra located in a domain $\Omega\subset \R^N, N\geq 1$  
\begin{equation}\label{Gierer-Meinhardt}
\left\{   \begin{array}{rcl}
\partial_t u    &=& 	  \epsilon^2 \Delta u   - u  + \frac{u^p}{v^q}  \text{ in }  \Omega  \times  (0,T),\\[0.2cm]
\tau \partial_t v   &=&  D \Delta  v    - v   + \frac{u^r}{v^s} \text{ in }  \Omega  \times  (0,T),\\[0.2cm]
\frac{ \partial u}{ \partial \nu }  & =&   \frac{ \partial v}{ \partial \nu } =0   \text{ in }  \partial \Omega  \times  (0,T),\\[0.2cm]
u( 0 ) &=&  u_0 > 0   \text{ and  }  v(0)  = v_0 > 0.
\end{array} 
 \right.
\end{equation}
Here $\nu$  stands for the  unit outer  normal  vector  to $\partial 
\Omega,$ whilst $u$ and  $v$ are the concentrations of  the    activator  and the   inhibitor, respectively. Besides,   $\epsilon$ and $ D$ represent the  diffusing  coefficients  and exponents $p,q,r,s$ measuring the morphogens interactions satisfy 
$$p>1, q,r   > 0   \text{ and }  s > -1.$$ 
A biologically interesting case arises when the activator diffuses much faster compared to the inhibitor. So in the case where $D \to +\infty,$ dividing the second one in \eqref{Gierer-Meinhardt} by  $D$,   we take formally that  for any $ t \in (0,T)$ and  thanks to  the  Neumann boundary condition, activator's concentration  $v$   will   be  spatial homogeneous, i.e. $ v(x,t)   =   \xi (t),$ (this system was first introduced by Keener \cite{KSAM78}; see also \cite{KSN17}, \cite{NKMI2018} and \cite{KBM20}; a rigorous proof for a version of Gierer-Meinhardt system can be found in  \cite{MCMVJM17} and \cite{MCHKSN18}, whilst for the case of general reaction-diffusion systems the interested reader can check \cite{BK19}).  Next, integrating   the second equation   in     \eqref{Gierer-Meinhardt},  we  finally derive  the shadow  system for  $u$ and $\xi$
\begin{equation}\label{Shadow-model-xi-homogeneous}
\left\{   \begin{array}{rcl}
\partial_t u    &=& 	  \epsilon^2 \Delta u   - u  + \frac{u^p}{v^q}  \text{ in }  \Omega  \times  (0,T),\\[0.2cm]
\tau   \partial_t \xi   &=&     - \xi      + \frac{ \def\avint{\mathop{\,\rlap{-}\!\!\int}\nolimits} \avint_\Omega
u^r\,dx}{ \xi^s} \text{ in }  \Omega  \times  (0,T),\\[0.2cm]
\frac{ \partial u}{ \partial \nu }  &=& 0   \text{ in }  \partial \Omega  \times  (0,T),\\[0.2cm]
u( 0 ) &=&  u_0 > 0, 
\end{array} 
 \right.
\end{equation}
where 
$$\def\avint{\mathop{\,\rlap{--}\!\!\int}\nolimits} \avint_\Omega
u^r\,dx =   \frac{1}{ |\Omega|}   \int_{\Omega}  u^r dx.   $$
We  now focus on the case $ \tau  = 0,$ that is when inhibitor's response  rate is quite small  against inhibitor's growth, and thus by the second  equation in \eqref{Shadow-model-xi-homogeneous} we derive 
\begin{equation}\label{solution-xi-t}
\xi(t)   =     \left(  \def\avint{\mathop{\,\rlap{--}\!\!\int}\nolimits} \avint_\Omega
u^r\,dx  \right)^\frac{1}{s+1}.
\end{equation}     
Next plugging \eqref{solution-xi-t} into  \eqref{Shadow-model-xi-homogeneous}, we  finally obtain the following non-local  system
\begin{equation}\label{equa-u-non-local}
\left\{\begin{array}{rcl}
   \partial_t u    &=&  \Delta u  -  u + {\displaystyle\frac{u^p}{ \left(\mathop{\,\rlap{-}\!\!\int}\nolimits_\Omega    u^r dx  \right)^\gamma }}\quad\text{in}\quad   \Omega \times  (0,T),  \\
\frac{ \partial u}{ \partial \nu}    &=&  0   \text{ on  }      \partial \Omega \times  (0,T)\\
u(0) & = & u_0 \geq 0,
\end{array} \right.
\end{equation}
where $  \gamma  =  \frac{1}{s+1}$  and $\epsilon = 1$ for simplicity.  

Notably under the Turing condition
\begin{equation}\label{Turing-cond}
p-r\gamma<1
\end{equation}
the spatial homogeneous solution of \eqref{equa-u-non-local} given by
\begin{equation*}
\frac{du}{dt}=-u+u^{p-r\gamma},\quad u(0)=u_0\geq 0,
\end{equation*}
never exhibits blow-up, since the non-linearity is sublinear, and its unique stationary state
$u = 1$ is asymptotically stable, cf. \cite{KSN17}. On the other hand, under condition \eqref{Turing-cond} the solution of \eqref{equa-u-non-local} when domain $\Omega$ is the unit $N-$dimensional sphere, $N\geq 3,$  exhibits a Type I finite-time blow-up only at the center of the sphere, cf. \cite[Theorem 3.7]{KSN17}, and thus a Turing type instability (in the form of diffusion-driven blow-up) occurs. Notably, the blowup time $T$ can be done relatively small  by reducing properly the size of the initial data, cf. \cite{KSN17} and \cite{NKMI2018}. For analogous blowup results for the Gierer-Meinhardt system on an evolving domain and for a non-local Fisher-KPP equation one can see \cite{KBM20} and \cite{KL20} respectively. 

The main purpose of the current work is to describe the form of the developing Turing instability (blowup) patterns for the solution of problem  \eqref{equa-u-non-local} in a region of any blowup point.
To this end we first note that the non-local equation in \eqref{equa-u-non-local} is closely associated to the standard heat equation given in \eqref{classical-heat-equation-intro}.   Indeed,  if  we ignore  the linear term $-u$ and  take $\gamma =0$, then it turns out the classical nonlinear heat equation
\begin{equation}\label{classical-heat-equation-intro}
  \partial_t u = \Delta u + u^p.  
\end{equation}
Let us now recall some well known results link with the blowup behaviour of \eqref{classical-heat-equation-intro}. 
Firstly, \eqref{classical-heat-equation-intro}  is well-posed in $L^\infty(\Omega),$ hence, for any $u_0 \in L^\infty (\Omega) $, either the solution is global or it blows up in finite time $T=T(u_0),$ i.e
\begin{equation}\label{defi-norm-L-infty}
    \|  u (\cdot, t)\|_{L^\infty(\Omega)}    \to  + \infty  \text{ as }   t \to T.
\end{equation}

There is also a great amount of  works related  to  the blowing up behaviour of \eqref{classical-heat-equation-intro} and its various generalizations. In particular, the construction of blowup solutions of \eqref{classical-heat-equation-intro} is discussed in \cite{Brejde92}, \cite{BKnon94}, \cite{MZdm97}, \cite{MZnon97} whilst one can find in \cite{TZpre15}, \cite{DNZtunisian-2017}, \cite{NZsns16} analogous constructions for perturbed nonlinear source terms. Besides, in \cite{MZjfa08} \cite{NZ2017}, \cite{DNZMAMS20} such blowup solutions are build for complex Ginzburg-Landau equations and in  \cite{NZcpde15},  \cite{DJFA2019}, \cite{DJDE2019}  for   complex-valued heat equations which lack  variational structure; the case of parabolic systems is examined in \cite{GNZsysparabolic2016}. Additionally, in  \cite{DZM3AS19} a singular solution associated with a duality concept to blowup phenomenon, called quenching (or touch-down in MEMS literature), is constructed.  In particular, the   authors in \cite{DZM3AS19} developed further the idea of \cite{MZnon97} to describe the quenching behaviour of a non-local problem arising from MEMS industry (see more in  \cite{DKN20}, \cite{GKDCDS2012}, \cite{GSSAIM2015}, \cite{NIKN16}  and the references therein). 

\medskip
\noindent
\textbf{Acknowledgement:}
G. K.  Duong would like to thank Professor for his constant support and valuable comments during his appointment at New York University in Abu Dhabi, UAE.       

\section{Main results}
In the current paper our main aim is to  construct  a solution to  \eqref{equa-u-non-local}   which blows up in finite time $0<T< 1$ and  a point $x_0 \in  \Omega,$ that is
$$ u(x_0, t) \to   +\infty, \text{ as } t \to T. $$
In that case \eqref{defi-norm-L-infty} holds, and  our result holds true under the Turing condition, i.e. when 
\begin{equation}\label{subcritical-condition}
    \frac{r}{p-1}  < \frac{N}{2}\quad \mbox{and}\quad  \gamma r \neq p-1, p>1.
\end{equation}
 \begin{theorem}\label{theorem-existence}    Let  $\Omega$ be a smooth and bounded domain in $ \mathbb{R}^N$ and also assume  \eqref{subcritical-condition}  is satisfied.  Then, for any arbitrary   $x_0  \in \Omega$ there exist    initial data  $u_0   \geq  0    $  such that the solution of   \eqref{equa-u-non-local}    blows up in finite time $T(u_0)<1$,  only  at $x_0$. Moreover, we have the following     blowup profiles
 \begin{itemize}
 \item[$(i)$] The   intermediate profile   for  all $ t \in (0,T)$
 \begin{eqnarray}\label{estima-T-t-u-theorem-varphi-0}
 & & \left\|    (T-t)^{\frac{1}{p-1}} u(\cdot  , t  )  - ( \theta^*)^{-\frac{1}{p-1}}    \varphi_0 \left(   \frac{x-x_0}{ \sqrt{(T-t) |\ln(T-t)|}}  \right) \right\|_{L^\infty(\Omega)} \label{intermediate-profile}\\
  &&   \hspace{7cm}  \leq  \frac{C}{1 + \sqrt{|\ln(T-t)|}} \nonumber,
 \end{eqnarray}
where 
\begin{eqnarray}
\theta^*  =   \lim_{t \to T }       \left(  \def\avint{\mathop{\,\rlap{--}\!\!\int}\nolimits} \avint_\Omega
u^r\,dx  \right)^{-\gamma}   > 0,
\end{eqnarray}
and 
\begin{equation}\label{defi-varphi-0}
\varphi_0 \left(  z \right)    =  \left( p -1   +    \frac{(p-1)^2}{4p} |z|^2    \right)^{ -\frac{1}{p-1}}.
\end{equation}
\item[$(ii)$] The final profile at $t =T$ is provided by $ u(x,t)   \to    u^* (x) \in C^2( \Omega  \setminus  \{ x_0\})$ uniformly     on  compact sets   of  $\Omega  \setminus  \{ x_0\}$.  In particular, near $x_0$,  solution $u^*$ behaves  as
$$   u^*(x)      \sim   (\theta^*)^{ -\frac{1}{p-1}} \left[ \frac{(p-1)^2}{8p}  \frac{|x- x_0|^2}{|\ln|x -x_0||}\right]^{ -\frac{1}{p-1}}  \text{ as }  x \to x_0.   $$
 \end{itemize}
 \end{theorem}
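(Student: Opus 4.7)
The plan is to adapt the Merle--Zaag topological shooting method, as developed for nonlocal problems in \cite{DZM3AS19}, to the present setting. The crucial structural feature is that the nonlocal coefficient
$$ \theta(t) := \left(\mathop{\,\rlap{-}\!\!\int}\nolimits_\Omega u^r\,dx\right)^{-\gamma} $$
depends only on time, so after establishing its convergence to a positive limit $\theta^*$, the problem reduces to a perturbation of the classical semilinear heat equation. First I would translate the blowup point to the origin, introduce similarity variables $y=(x-x_0)/\sqrt{T-t}$, $s=-\ln(T-t)$, and set $w(y,s)=(T-t)^{1/(p-1)}u(x,t)$. Since $\Omega$ is bounded and one expects blowup only at $x_0$, I would multiply by a smooth cutoff supported in a small ball $B(x_0,\delta)\subset\Omega$, reducing the analysis to an equation on $\mathbb{R}^N$ of the form
$$ \partial_s w = \Delta w - \tfrac{1}{2}y\cdot\nabla w - \tfrac{w}{p-1} + \theta(t(s))\, w^p + (\text{lower-order and cutoff errors}). $$
The lower-order linear term $-u$ from the original equation is subdominant under the similarity scaling and is absorbed as a perturbation, as in \cite{DZM3AS19}.

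Next I would linearize around the anticipated profile $\bar w(y)=(\theta^*)^{-1/(p-1)}\varphi_0(y/\sqrt s)$ and expand $w-\bar w$ in the Hermite basis of $L^2_\rho(\mathbb{R}^N)$, which diagonalizes $\mathcal{L}=\Delta-\tfrac12 y\cdot\nabla+1$ with eigenvalues $1-m/2$. The modes $m=0,1$ are unstable and $m=2$ is neutral, giving a finite-dimensional parameter space of dimension $1+N+N(N+1)/2$. Following \cite{MZnon97}, I would define a shrinking set $V_A(s)$ that prescribes the right decay rate for each mode (slower for unstable/neutral, faster than $s^{-1/2}$ for the stable infinite-dimensional part and the outer region), and prepare a finite-dimensional family of initial data $u_0 = u_0(d_0,d_1,\dots)$ matched to these modes at an initial time $s_0=-\ln T$. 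A Brouwer-type argument then produces parameters for which $w(s)\in V_A(s)$ for every $s\geq s_0$; the resulting estimate is exactly the intermediate profile~\eqref{intermediate-profile}.

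The genuinely new step, and the hardest one, is the bootstrap on $\theta(t)$. One needs to show self-consistently that while $w$ stays in $V_A(s)$, the quantity $\int_\Omega u^r\,dx$ stays bounded and converges. Inside $V_A(s)$ the solution satisfies the pointwise upper bound $u(x,t)\lesssim(T-t)^{-1/(p-1)}\varphi_0((x-x_0)/\sqrt{(T-t)|\ln(T-t)|})$; integrating $u^r$ against this envelope, the subcritical Turing condition $r/(p-1)<N/2$ gives a finite, $t$-independent upper bound. Using the equation to derive a differential inequality for $\int_\Omega u^r\,dx$, together with a Cauchy argument, shows that the integral has a finite positive limit, whence $\theta(t)\to \theta^*>0$. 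The second condition $\gamma r\neq p-1$ excludes a degenerate resonance between the scaling of the nonlinearity and the nonlocal normalization and is used when verifying that the perturbations $\theta(t(s))-\theta^*$ are genuinely subdominant with respect to the $V_A(s)$ control (so that the contraction step closes).

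Finally, the existence of the limit $u^*(x)=\lim_{t\to T}u(x,t)$ in $C^2(\Omega\setminus\{x_0\})$ follows from standard parabolic regularity: away from $x_0$ the shrinking set provides uniform $L^\infty$ bounds on $u$, preventing blowup there, and the nonlocal term is a bounded multiplier. For the pointwise asymptotics of $u^*$ near $x_0$, I would follow the inner/outer matching scheme of \cite{MZdm97} and its nonlocal adaptation in \cite{DZM3AS19}: for $x$ close to $x_0$, choose $t_x<T$ with $|x-x_0|=K\sqrt{(T-t_x)|\ln(T-t_x)|}$, use the intermediate profile at time $t_x$ to evaluate $u(x,t_x)$, then propagate on $(t_x,T)$ via the associated ODE (in which the spatial diffusion is negligible at that scale), obtaining the stated $|x-x_0|^{-2/(p-1)}|\ln|x-x_0||^{1/(p-1)}$ behavior with the correct constant $(p-1)^2/(8p)$ and prefactor $(\theta^*)^{-1/(p-1)}$. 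The main obstacle throughout is the simultaneous control of the nonlocal coefficient $\theta(t)$ alongside the pointwise shrinking-set estimates; the Neumann condition and bounded geometry of $\Omega$ are handled cheaply by the cutoff and do not interact with the blowup analysis.
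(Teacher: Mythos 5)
Your overall strategy (similarity variables, Hermite decomposition, a Merle--Zaag shrinking set in three regions, topological shooting, and self-consistent control of the nonlocal coefficient) matches the paper's in outline, but there is one genuine technical divergence and one gap tied to it that would cause your argument to stall.

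\textbf{Normalization and circularity.} You propose to keep the coefficient $\theta(t(s))$ on the nonlinearity and linearize around the profile $\bar w(y)=(\theta^*)^{-1/(p-1)}\varphi_0(y/\sqrt s)$. But $\theta^*$ is a quantity that only emerges \emph{a posteriori} as the limit of $\theta(t)$, and it depends on the solution you are trying to construct; making it appear in the ansatz you linearize around creates a circularity you do not resolve (one would have to treat $\theta^*$ as an additional shooting parameter and match it at the end, which you do not set up). The paper sidesteps this cleanly by the \emph{dynamic} change of unknown $U(x,t)=\theta(t)^{1/(p-1)}u(x,t)$ (equation \eqref{defi-U-by-u}). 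Then $U$ satisfies \eqref{equa-U-theta-'-theta}, i.e. the nonlinearity has coefficient exactly $1$, the extra term is $\bigl(\frac{1}{p-1}\frac{\theta'}{\theta}-1\bigr)U$, and the profile $\varphi(y,s)$ in \eqref{defi-varphi} has no unknown amplitude. One recovers $u=\theta(t)^{-1/(p-1)}U$ at the end and uses $\theta(t)\to\theta^*$ to read off the $(\theta^*)^{-1/(p-1)}$ prefactor in the statement. This is not merely cosmetic: it is what makes the shooting argument close without an extra free parameter.

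\textbf{The role of $\gamma r\ne p-1$.} Your explanation of this condition (``excludes a degenerate resonance between the scaling of the nonlinearity and the nonlocal normalization'') is too vague to identify where it is actually used, and in your formulation (where you do not perform the dynamic normalization) it is unclear you need it at all. In the paper it has a precise and unavoidable role: after substituting $U=\theta^{1/(p-1)}u$ into the definition $\theta=(\mathop{\,\rlap{-}\!\!\int}\nolimits_\Omega u^r)^{-\gamma}$, one must solve for $\theta$ in terms of $U$, which gives \eqref{defi-theta-by-U},
\[
\theta(t)=\Bigl(\mathop{\,\rlap{-}\!\!\int}\nolimits_\Omega U^r\,dx\Bigr)^{-\gamma/(1-\frac{r\gamma}{p-1})},
\]
and this is only meaningful when $1-\frac{r\gamma}{p-1}\ne 0$. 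This is the whole point of the Remark in the paper following Corollary \ref{corollary-u-L-k}.

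\textbf{Control of $\theta$.} You state that a ``Cauchy argument'' shows $\theta(t)$ has a limit, but for that you need a quantitative rate, not just boundedness. The paper proves $|\theta'(t)|\le (T-t)^{-1+\varepsilon}$ (Proposition \ref{propo-bar-mu-bounded}), which is what gives convergence and, crucially, makes the term $\frac{1}{p-1}\frac{\theta'}{\theta}U$ (equivalently the $G$ term in \eqref{c4defini-N-term}) exponentially small in $s$, so that it does not perturb the $s^{-i}$-scale mode dynamics. Obtaining this $\theta'$ estimate requires the a priori bounds on $U$ and $\nabla U$ in \emph{all three} regions $P_1$, $P_2$, $P_3$ of the shrinking set, not only the pointwise envelope in the blowup region $P_1$ that you describe; the integral $\int_\Omega U^{p-1+r}$ and $\int_\Omega U^{r-2}|\nabla U|^2$ must be controlled on the whole of $\Omega$ to close the differential inequality for $\theta$.

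In short, the high-level architecture you propose is correct, and the final-profile matching and topological argument are as in the paper, but you would need to (a) adopt the dynamic normalization $U=\theta(t)^{1/(p-1)}u$ to remove the circular dependence on the unknown $\theta^*$, and in so doing recognize the concrete role of $\gamma r\ne p-1$, and (b) make the convergence of $\theta$ quantitative via a $(T-t)^{-1+\varepsilon}$ bound on $\theta'$, using shrinking-set estimates in all three regions.
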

 Consequently,   we also describe the asymptotic of $\|u\|^k_{L^k(\Omega)}$ in the following.
 
 \begin{corollary}[Behavior of $L^k$ norm at blowup time]\label{corollary-u-L-k} Let  $ u$ be the constructed blowup solution given by Theorem \ref{theorem-existence}, and $k >0$. Then, the following hold:
 \begin{itemize}
     \item[$(i)$] If  $ \frac{k}{p-1} < \frac{N}{2} $, then 
     $$ \| u\|_{L^k(\Omega)}^k \to C(k) < +\infty, \text{ as } t \to T.$$
     \item[$(ii)$] If $ \frac{k}{p-1} > \frac{N}{2} $, then 
     \begin{equation}\label{equivlence-norm-U-L-k}
     \|u(t)\|_{L^k(\Omega)}^k  = \left( (\theta^*)^{-\frac{k}{p-1}} \int_0^\infty \varphi_0^k(r) r^{N-1} dr + o_{t \to T}(1)\right) (T-t)^{\frac{N}{2} - \frac{k}{p-1}} |\ln(T-t)|^{\frac{N}{2}}. 
 \end{equation}
 \item[$(iii)$] If $ \frac{k}{p-1}  - \frac{N}{2} =0$, then 
 $$  \|u(t)\|_{L^k(\Omega)}^k  =  \left( (\theta^*)^{-\frac{k}{p-1}} \frac{k}{\frac{N}{2} +1}\int_0^\infty \varphi_0^{p-1+k}(r) r^{N-1} dr + o_{t \to T}(1)\right) |\ln(T-t)|^{\frac{N}{2} +1}.  $$
 \end{itemize}
 \end{corollary}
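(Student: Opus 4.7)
The plan is to decompose $\Omega = B(x_0,R) \cup (\Omega \setminus B(x_0,R))$ for a small fixed $R > 0$ and estimate $\|u(t)\|_{L^k(\Omega)}^k = \int_\Omega u^k\,dx$ on each region using the two halves of Theorem~\ref{theorem-existence}. On the outer region, part (ii) gives $u(\cdot,t) \to u^*$ uniformly and $u^* \in C^2(\Omega \setminus \{x_0\})$, so
\begin{equation*}
\int_{\Omega\setminus B(x_0,R)} u(x,t)^k \, dx \to \int_{\Omega\setminus B(x_0,R)} u^*(x)^k \, dx < \infty.
\end{equation*}
On the inner region, the intermediate profile \eqref{intermediate-profile} combined with the self-similar change of variables $z = (x-x_0)/\sqrt{(T-t)|\ln(T-t)|}$ produces
\begin{equation*}
\int_{B(x_0,R)} u^k \, dx = \bigl[(\theta^*)^{-k/(p-1)} + o(1)\bigr](T-t)^{N/2-k/(p-1)} |\ln(T-t)|^{N/2}\, J(M(t)),
\end{equation*}
where $J(M) := \int_{|z|\leq M} \varphi_0(z)^k\,dz$ and $M(t) := R/\sqrt{(T-t)|\ln(T-t)|} \to \infty$. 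The three cases then separate themselves by the asymptotics of $J(M)$, which is governed by the tail $\varphi_0(z) \sim c_0^{-1/(p-1)}|z|^{-2/(p-1)}$ with $c_0 = (p-1)^2/(4p)$.

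Case (ii) is the easiest: when $k/(p-1) > N/2$, one has $\varphi_0^k \in L^1(\mathbb{R}^N)$, so $J(M) \to \int_{\mathbb{R}^N}\varphi_0^k\,dz$ and the diverging prefactor $(T-t)^{N/2 - k/(p-1)}$ forces the inner contribution to dominate the bounded outer one, yielding \eqref{equivlence-norm-U-L-k}. Case (i), $k/(p-1) < N/2$, proceeds by noting that the singularity $|x-x_0|^{-2/(p-1)}|\ln|x-x_0||^{1/(p-1)}$ of $u^*$ is $L^k$-integrable in this regime, so $u^* \in L^k(\Omega)$; coupling the uniform convergence on $\Omega\setminus B(x_0,R)$ with pointwise upper bounds of the form $u(x,t) \lesssim \min\{(T-t)^{-1/(p-1)} \varphi_0(z), |x-x_0|^{-2/(p-1)}|\ln|x-x_0||^{1/(p-1)}\}$ inherited from the construction behind Theorem~\ref{theorem-existence}, dominated convergence on $B(x_0,R)$ allows one to pass to the limit $\int_\Omega u^k\,dx \to \|u^*\|_{L^k(\Omega)}^k =: C(k)$ as $t \to T$.

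The borderline case (iii), $k/(p-1) = N/2$, is the main difficulty: since $\varphi_0^k \sim |z|^{-N}$, the integral $J(M)$ is logarithmically divergent and its coefficient must be identified precisely. To bring it into the form stated in the corollary, I would use the leading-order profile identity $\varphi_0^p = \frac{1}{p-1}\varphi_0 + \frac{1}{2}y\cdot\nabla\varphi_0$, which $\varphi_0$ satisfies by direct computation from \eqref{defi-varphi-0}. Multiplying by $\varphi_0^{k-1}$, integrating over $B_M$, and integrating the drift term by parts yields
\begin{equation*}
\int_{B_M}\varphi_0^{p-1+k}\,dy = \left(\frac{1}{p-1}-\frac{N}{2k}\right)\int_{B_M}\varphi_0^k\,dy + \frac{|S^{N-1}|}{2k}\, M^N\, \varphi_0(M)^k,
\end{equation*}
in which the parenthesis vanishes exactly in case (iii). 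Sending $M\to\infty$ then identifies the leading-order coefficient of $\ln M$ in $J(M)$ in terms of the convergent radial integral $\int_0^\infty \varphi_0^{p-1+k}(r)\,r^{N-1}\,dr$, and the expansion $\ln M = \tfrac{1}{2}|\ln(T-t)| + O(\ln|\ln(T-t)|)$ converts this into the stated $|\ln(T-t)|^{N/2+1}$ asymptotic with the prescribed coefficient. The principal technical obstacle across all three cases is ensuring that the $O(1/\sqrt{|\ln(T-t)|})$ error in \eqref{intermediate-profile}, once amplified by the diverging prefactor $(T-t)^{-k/(p-1)}$, still integrates to an $o(1)$ correction after the change of variables—especially in the tail of the self-similar region where $\varphi_0(z)$ itself is small; this is handled precisely by confining the self-similar description to $|z|\leq M(t)$ and relying on the uniform-on-compacts convergence to $u^*$ outside, rather than trying to extend the self-similar representation globally.
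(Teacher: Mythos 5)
Your treatments of cases (i) and (ii) are essentially sound and line up with the paper's strategy (for (i), the paper instead proves convergence by showing $\left|\partial_t\|U\|_{L^k(\Omega)}^k\right|\lesssim (T-t)^{-1+\varepsilon}$ and integrating in time, but dominated convergence also works). Case (iii), however, contains a real gap, and it changes the constant.

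In the borderline case the $L^k$-mass is carried almost entirely by the intermediate region $P_2$. There $u(x,t)$ is not well approximated by $(\theta^*)^{-\frac1{p-1}}(T-t)^{-\frac1{p-1}}\varphi_0(z)$ with $z=(x-x_0)/\sqrt{(T-t)|\ln(T-t)|}$; it is approximated by $u^*(x)$, whose profile (Theorem \ref{theorem-existence}(ii)) carries $|\ln|x-x_0||$ where the self-similar formula carries $|\ln(T-t)|$. These two agree at the $P_1/P_2$ interface (where $|\ln|x||\approx\tfrac12|\ln(T-t)|$) but deviate deeper into $P_2$. The $L^\infty$ bound \eqref{intermediate-profile} controls only an \emph{additive} error of size $O\bigl(|\ln(T-t)|^{-1/2}\bigr)$, which dwarfs $\varphi_0(z)$ wherever $|z|$ is large, so it cannot be used to compute the log-divergent tail of $\int_{B_R}u^k\,dx$; your closing remark about ``confining the self-similar description to $|z|\le M(t)$'' does not address this, since the problematic tail lies inside $|z|\le M(t)$. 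The discrepancy is quantitative: inserting the self-similar formula over $P_2$ produces $|\ln(T-t)|^{N/2}\cdot\tfrac12|\ln(T-t)|$ after the radial integration, whereas inserting the true profile produces $\int |\ln r|^{N/2}\,dr/r\sim \tfrac{1}{N/2+1}\bigl(\tfrac12|\ln(T-t)|\bigr)^{N/2+1}$. Your computation therefore overshoots the claimed constant by exactly the factor $\tfrac{N}{2}+1$, so it actually contradicts the statement you set out to prove.

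The paper sidesteps this by differentiating in time: $\partial_t\|U\|_{L^k}^k = k\int_\Omega\Delta U\cdot U^{k-1} + k\int_\Omega U^{p-1+k} + (\ldots)\int_\Omega U^k$, where the middle term dominates, and since $\tfrac{p-1+k}{p-1}=1+\tfrac N2>\tfrac N2$ that term falls under case (ii). Integrating $\int(T-s)^{-1}|\ln(T-s)|^{N/2}\,ds$ in $t$ then produces the factor $\tfrac{1}{N/2+1}$ automatically. Your Pohozaev-type identity is correct as an algebraic fact and gives the useful relation $\int_0^\infty\varphi_0^{p-1+k}r^{N-1}\,dr=\tfrac{1}{2k}\bigl(\tfrac{(p-1)^2}{4p}\bigr)^{-k/(p-1)}$, but on its own it identifies only the coefficient of $\ln M$ in $J(M)$, not the coefficient of $|\ln(T-t)|^{N/2+1}$ in $\|u\|_{L^k}^k$; those two differ by precisely the $\tfrac{1}{N/2+1}$ you are missing, and the missing piece comes from integrating the $|\ln|x||$-dependence of the profile across $P_2$, which only the paper's time-derivative reduction captures.
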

 
 %
 
 \begin{remark}[Open problem] Note that in  \eqref{subcritical-condition}, we impose 
 $$ \gamma r  \neq p-1,  $$
since otherwise, due to \eqref{defi-theta-by-U},  our method does not  work.  
 \end{remark}
 \begin{remark} In Theorem \ref{theorem-existence} a solution blowing up at an arbitrary $x_0$ is constructed. However, by the translation $x -x_0$ we can always derive a blowing  up solution at $0.$ So, we need to prove Theorem \ref{theorem-existence} only for $x_0 =0.$ In addition, we can apply  the technique  of \cite{Mercpam92}, and we can establish a blowup solution  at only $k$ points $x_1,...,x_k$ with blowup profiles provided by Theorem \ref{theorem-existence} by replacing $L^{\infty}(\Omega)$ with $L^{\infty} (|x-x_j| \leq \epsilon_j)$ at each blowup point $x_j.$
\end{remark}
 \begin{remark}[Stability of the blowup profile] Let us consider $\hat u$, the constructed solution in Theorem \ref{theorem-existence} with initial data $\hat u_0$ which blows up at time $\hat T$ and at the point $\hat x_0.$  Then there exists an open neighborhood  of $\hat u_0$  in a sub-space of    $C(\bar \Omega) $ and with a suitable topology, named $\hat{\mathcal{U}}_0$ such that for all  $u_0 \in  \hat{\mathcal{U}}_0$, the corresponding solution of \eqref{equa-u-non-local}, blows up at $x(u_0)$ and only at $T(u_0)$ with blowup profiles given by in Theorem \ref{theorem-existence} by replacing $x_0$ to $x(u_0).$ In particular, we have
 $$  (x(u_0), T(u_0), \theta^*(u_0))  \to (\hat x_0, \hat T, \hat{\theta}^*) \text{ as } u_0 \to \hat u_0, $$
 where 
 $$ \theta^*(u_0) =\lim_{t \to T(u_0) }       \left(  \def\avint{\mathop{\,\rlap{--}\!\!\int}\nolimits} \avint_\Omega
u^r\,dx  \right)^{-\gamma}\quad\mbox{and}\quad \hat{\theta}^*=\lim_{t \to T(\hat u_0) }       \left(  \def\avint{\mathop{\,\rlap{--}\!\!\int}\nolimits} \avint_\Omega
\hat{u}^r\,dx  \right)^{-\gamma}.$$
The stability result follows by the interpretation of the parameters of the finite-dimensional problem in terms of the blowup time and the blowup point, see more in  \cite{MZdm97}.
 \end{remark}
 \section{Formal approach}
In this  section, we aim at giving a formal approach which explains how  the profile  in Theorem \ref{theorem-existence}  is derived.  Firstly, let us denote    
\begin{equation}\label{defi-theta-}
\theta (t)   =    \frac{1}{   \left(\mathop{\,\rlap{-}\!\!\int}\nolimits_\Omega    u^r(t) dx \right)^\gamma }. 
\end{equation} 
Henceforth,  we  rewrite   equation  \eqref{equa-u-non-local}  as follows
\begin{equation}\label{equa-main-short}
\partial_t u  = \Delta u - u     +  \theta(t) u^p. 
\end{equation}
We can see that  $\theta(t) $ strongly affects the blowup dynamic of $u$.    Let us assume  that $u$  blows  up in finite time $T$  and   at  the origin   $0 \in \Omega$ (without loss of generality).   
As we are proceeding formally, we believe the three following cases are important for our analysis:
  \begin{eqnarray}
  \theta(t)   & \to &  0     \text{ as }   t \to T   \label{case-theta-to-0},\\
  \theta(t)   & \to  &  \theta^* >0   \text{ as }  t \to T  \label{case-theta-to-thata-*},\\
  \theta  (t)  & \to &  +\infty   \text{ as }   t \to T  \label{case-theta-to-infty}.
  \end{eqnarray}
 In particular, \eqref{case-theta-to-infty}  is excluded   by   Theorem 3.1 and Remark 3.2  given in  \cite{KSN17}.  In the context of this work,  we  aim to handle only the convergent case i.e  \eqref{case-theta-to-thata-*}, whilst the case \eqref{case-theta-to-0} will be treated in a forthcoming paper.  We note  that the convergent  case \eqref{case-theta-to-thata-*} was already encountered in  \cite{DZM3AS19} where the authors managed to control the non-local term in a MEMS model, up to the quenching time. 
 
 We rewrite  \eqref{equa-main-short}  as  follows
 \begin{equation*}
 \partial_t  u  =  \Delta  u   +    \theta^*   u^p  +    (\theta(t) -\theta^*) u^p - u
 \end{equation*}
 and we formally neglect  the term 
 $$ (\theta(t) -\theta^*) u^p - u,$$
 since,  it  is   relatively small compared to the main non-linear  term 
 $$  \theta^* u^p.$$        
 Therefore,   it is important to study the limit problem
 \begin{eqnarray*}
\partial_t u  =   \Delta u  +     \theta^*  u^p,
\end{eqnarray*}
instead of the full model \eqref{equa-main-short}.  In addition to that, using the following re-scaled form
$$   u(x,t) = ( \theta^*)^{-\frac{1}{p-1}}  U(x,t ),$$
$U$ then  solves
\begin{equation}\label{classicle-heat-equa}
\partial_t U    = \Delta U   + U^p,
\end{equation}
i.e.  the standard heat equation whose blowup solutions have been studied thoroughly, cf. Section \ref{section-intro}. In particular,  the approach developed in  \cite{MZnon97} to construct a very precised  asymptotic profile of  blowup solutions to \eqref{classicle-heat-equa} is quite related to our work. Indeed, for some positive constants  $K_0$ and $ \epsilon_0$,  we  now cover  $\Omega$  by 
$$  \Omega    =   P_1 (t)    \cup P_2(t)  \cup  P_3(t) , $$
where 
\begin{eqnarray}
P_1 (t)  &=&  \left\{   x \in \mathbb{R}^N  \left| \right.  |x| \leq K_0 \sqrt{(T-t) |\ln(T-t)|}    \right\} \label{defini-P-1-t},\\
P_2 (t) & =& \left\{   x \in \mathbb{R}^N   \left| \right.   \frac{K_0}{4} \sqrt{(T-t) |\ln(T-t)|}  \leq  |x| \leq \epsilon_0   \right\} \label{defini-P-2-t},\\
P_3(t) & =&  \left\{  x \in \mathbb{R}^N  \left|  \right.  |x| \geq \frac{\epsilon_0}{4}       \right\}   \cap  \Omega  .\label{defini-P-3-t}
\end{eqnarray}
Here and throughout the proof, we assume that $T<1$. Indeed, all the key estimates of our proof (Propositions 4.4, 4.5, etc.) hold for $T$ small enough.
On each  domain, we  will control  the solution with a suitable behavior.    In particular, we also name them by  $P_1(t)$-\textit{blowup region};  $P_2(t)$-\textit{intermediate region}; and   $P_3(t)$- \textit{regular region}.
\begin{center}
\textbf{ Asymptotic blowup profile in region $P_1$:  }
\end{center}
The region $P_1$ is the region where the blowup phenomenon  mainly occurs. Besides,  the blowup dynamic is described via   the following similarity variable introduced  in \cite{GKcpam85}
\begin{equation}\label{similarity-variable}
y  = \frac{x}{\sqrt{T-t}},     \quad  s  = -\ln(T-t)  \text{ and  } W (y,s)  = (T-t)^{\frac{1}{p-1}} U(x,t). 
\end{equation}
Hence, $W$ solves
\begin{equation}\label{equa-W}
\partial_s W =  \Delta W - \frac{1}{2} y \cdot \nabla W - \frac{W}{p-1}     + W^p,   \forall   (y,s)  \in \Omega_s \times  [-\ln(T-t), +\infty),
\end{equation}
where   $\Omega_s   =  e^{\frac{s}{2}} \Omega $.  Following   \cite[page 149]{MZdm97},   the generic profile inside this region is given as following
$$ W(y,s) \sim  \varphi(y,s) = \left(p-1+\frac{(p-1)^2}{4p} \frac{|y|^2}{s}  \right)^{-\frac{1}{p-1}} + \frac{ \kappa N }{ 2p  s}. $$
\begin{center}
    \textbf{ Asymptotic of the intermediate profile in region $P_2$:  }
\end{center}
In region $P_2$ we try to control  a  re-scaled  function   $\mathcal{U}$ instead of $U$. For all $|x|$ small, we can define $t(x)$ as the unique solution  of 
\begin{eqnarray}
|x| &=& \frac{K_0}{4} \sqrt{ (T-t(x) ) | \ln(T-t(x))|} \text{ with  } t(x) < T.\label{c4defini-t(x)-}
\end{eqnarray}
Then,   we introduce the re-scaled $\mathcal{U}$ by
\begin{equation}\label{c4rescaled-function-U}
\mathcal{U} (x, \xi, \tau) =  \left(T-t (x) \right)^{\frac{1}{p-1}}  U \left( x + \xi \sqrt{ T- t(x)},   (T-t(x)) \tau  +  t(x)     \right),
\end{equation}
where $  \xi \in   (T - t(x))^{-\frac{1}{2}} ( \Omega - x)   $ 	and  $\tau \in \left[- \frac{t(x)}{T-t(x)}, 1 \right).$  Note that,  $t(x)$  is well  defined as long as  $\epsilon_0 $ is  small enough and  we have the following  asymptotic behaviour
$$  t(x)  \to  T, \text{ as }  x \to 0.$$
 For convenience, we introduce 
\begin{equation}\label{c4defini-theta}
\varrho (x) = T - t(x),
\end{equation}
so, it follows
$$\varrho(x) \to 0  \text{ as } x  \to 0,$$
and by virtue of \eqref{classicle-heat-equa},  we derive that $\mathcal{U}$ solves
 \begin{equation*}\label{c4equa-xi}
 \partial_\tau  \mathcal{U}  =   \Delta_\xi \mathcal{U}   +\mathcal{U}^p .
 \end{equation*}
Now we recall the main argument in \cite{MZnon97},  which demonstrates that is only sufficient to study  the dynamic of $\mathcal{U}$  on a small region of the local space $(\xi,\tau)$ defined by
 $$  |\xi| \leq  \alpha_0  \sqrt{|\ln( \varrho(x))|}  \text{ and }   \tau \in \left[ -\frac{  t(x)}{ \varrho(x)}, 1 \right).  $$
 When $\tau = 0$, we are in region $P_1(t(x))$; in fact in that case  $P_1 (t(x))$ and $P_2(t(x))$ have some overlapping by their definitions. Due to the imposed  constraints in region $P_1(t(x))$, we derive that $\mathcal{U}(x,\xi, 0)$ is flat in the sense that 
$$  \mathcal{U}(x, \xi, 0) \sim \left( p-1 + \frac{(p-1)^2}{4 p} \frac{K_0^2}{16}\right)^{-\frac{1}{p-1}}. $$ 
The main  idea is to show that this flatness is preserved for all $\tau \in [0, 1)$ (that is for all $t \in [t(x),T)$), in the sense that the solution does not depend substantially on space. For that purpose   $\mathcal{U}$ is regarded   as a perturbation  of  $\hat{ \mathcal{U}}(\tau),$ where  $\hat{ \mathcal{U}}(\tau)$ is defined as follows
 \begin{equation*}\label{c4equa-hat-mathcal-U}
 \left\{  \begin{array}{rcl}
 \partial_\tau  \hat{\mathcal{U}} (\tau)   & = &  \hat{ \mathcal{U}}^p (\tau),\\
 \hat{\mathcal{U}} (0)  & = &  \left(  p-1  + \displaystyle \frac{(p-1)^2}{4 p} \displaystyle \frac{K_0^2}{16} \right)^{-\frac{1}{p-1}},
\end{array} \right.
 \end{equation*}
and is explicitly given by 
 \begin{equation}\label{c4defin-hat-mathcal-U-tau}
 \hat{\mathcal{U}} (\tau) =  \left(   (p-1) (1 -\tau)  +  \displaystyle \frac{(p-1)^2}{4 p} \frac{K_0^2}{16}  \right)^{-\frac{1}{p-1}}.
 \end{equation}

\bigskip

\begin{center}
    \textbf{ Asymptotic profile in the regular region $P_3$:  }
\end{center}
 Using  the well-posedness of  the Cauchy problem  for equation  \eqref{classicle-heat-equa}, we derive the asymptotic profile of  the solution $U$ within that region as a  perturbation of initial data $U(0)$.

 \section{Formulation of the full problem}
 In the current section, we aim at  stating  the rigorous steps towards the proof of Theorem   \ref{theorem-existence}.
 
 \subsection{Similarity variable } Let  $ u$ be a solution  of  \eqref{equa-u-non-local}  then we introduce \begin{equation}\label{defi-U-by-u}
     U(x,t) = \theta(t)^{\frac{1}{p-1}} u(x,t),
 \end{equation}
 which by virtue of \eqref{defi-theta-} entails
 \begin{eqnarray}\label{defi-theta-by-U}
\theta (t)  =     \left(  \def\avint{\mathop{\,\rlap{--}\!\!\int}\nolimits} \avint_\Omega
U^r\,dx  \right)^{ -\frac{\gamma}{1- \frac{r \gamma}{p-1}}}.
 \end{eqnarray}
Next using equation \eqref{equa-u-non-local},  $U$ reads 
\begin{equation}\label{equa-U-theta-'-theta}
\partial_t  U     =  \Delta U          + U^p    +\left( \frac{1}{p-1} \frac{\theta'(t)}{\theta(t)}-1 \right) U,
\end{equation}
where $\theta(t)$ is defined as in \eqref{defi-theta-by-U}.  Now, we use the similarity variable introduced in \eqref{similarity-variable} to derive 
\begin{eqnarray}
\partial_s W =  \Delta W - \frac{1}{2} y \cdot \nabla W - \frac{W}{p-1}     + W^p + \left( \frac{1}{p-1}\frac{\bar \theta_s(s)}{\bar \theta(s)}  - e^{-s} \right)  W ,\label{equa-W-rerogous}
\end{eqnarray}
where 
\begin{equation}\label{defi-bar-theta-s}
    \bar \theta(s) = \theta(t(s)),\quad s = -\ln (T-t),
\end{equation}
and $y \in \Omega_s = e^{\frac{s}{2}} \Omega$. 

Note that there are some  technical difficulties arising by the evolution of $\Omega_s$ which we can overcome by using the approach introduced in \cite{MNZNon2016} (also used in \cite{DZM3AS19}), and resolves this techical issue via the extension of problem on $\mathbb{R}^N$. Indeed, let us introduce 
$\chi_0 \in C_0^\infty ([0,+\infty)) $, satisfying 
\begin{equation}\label{c4defini-chi-0}
supp (\chi_0) \subset [0,2], \quad 0 \leq \chi_0(x) \leq 1, \forall x \text{ and  }   \chi_0 (x)= 1, \forall x \in [0,1].
 \end{equation}
Then,   we   define  the following  function
\begin{equation}\label{c4defini-psi-M-0-cut}
  \psi_{M_0} (y,s)  =   \chi_0 \left(   M_0  y e^{- \frac{s}{2}}       \right), \text{ for some } M_0 > 0.
\end{equation}
Let us  introduce
\begin{equation}\label{c4defini-w-small}
w (y,s)  = \left\{   \begin{array}{rcl}
W (y,s)  \psi_{M_0} (y,s)  &  \text{  if }  &  y \in  \Omega_s,\\[0.3cm]
0  & & \text{otherwise}.  
\end{array}  \right.
\end{equation}
Using equation \eqref{equa-W-rerogous}, $w$  reads
\begin{eqnarray*}
\partial_s w = \Delta w  -  \frac{1}{2} y \cdot \nabla w - \frac{1}{p-1} w   +  w^p + \left( \frac{1}{p-1} \frac{ \bar\theta' (s)}{\bar \theta (s)} -e^{-s} \right) w + F(w,W),
\end{eqnarray*}
where $F(w,W)$ is given by
\begin{equation}\label{c4defini-F-1}
F(w,W) = \left\{   \begin{array}{rcl} & &
W \left[ \partial_s \psi_{M_0}   -  \Delta \psi_{M_0}  +  \frac{1}{2} y \cdot \nabla \psi_{M_0} \right] - 2  \nabla \psi_{M_0} \cdot \nabla W\\[0.3cm]
 & & +  \psi_{M_0}\left(W   \right)^{p} -w^{p}  ,     \text{ if } y\in \Omega e^{\frac{s}{2}},\\[0.4cm]
 & &  0,   \text{ otherwise}. 
\end{array}   \right. 
\end{equation}
Note that the nonlinear term $F(w,W)$ is quite 
similar to the term that
occurs in \cite{DZM3AS19} and thus can be neglected. More precisely,  the growth of the following term
$$ \left( \frac{1}{p-1} \frac{ \bar\theta' (s)}{\bar \theta (s)} -e^{-s} \right) w + F(w,W), $$
can be controlled, as it actually decays exponentially.

Then,  using  \cite{BKnon94} and  \cite{MZdm97}, we get the following blowup profile:
\begin{equation}\label{defi-varphi}
   \varphi(y,s) := \left( p-1 + \frac{(p-1)^2}{4 p} \frac{|y|^2}{s} \right)^{-\frac{1}{p-1}} + \frac{\kappa N }{2p s}, \quad \kappa= (p-1)^{-\frac{1}{p-1}}.
\end{equation}
We  now linearize around $\varphi$ 
\begin{equation}\label{defi-q=w-varphi}
    q =w - \varphi,
\end{equation}
hence, $q$ solves 
 \begin{equation}\label{c4equa-Q}
\partial_s q   =  ( \mathcal{L} + V)  q    +  B(q) + R(y,s) + G(w,W),   
\end{equation}
where
\begin{eqnarray}
\mathcal{L}  & =&  \Delta  - \frac{1}{2} y  \cdot  \nabla  + Id,\label{c4defini-ope-mathcal-L}\\
V(y,s) & = &        p \left(  \varphi^{p-1}(y,s)    -  \frac{1}{p-1}     \right) ,\label{c4defini-potential-V}  \\
B(q)  &    =&     \left(q +  \varphi  \right)^p   -   \varphi^p -  p \varphi^{p-1} q      , \label{c4defini-B-Q}\\
R(y,s) & = &  -   \partial_s\varphi +       \Delta \varphi - \frac{1}{2}  y \cdot \nabla \varphi - \frac{\varphi}{3}     + \varphi^p, \label{c4defini-rest-term}\\
G(w,W)  & =&  \left( \frac{1}{p-1}  \frac{ \bar \theta' (s)}{ \bar \theta (s)}  -e^{-s} \right) \left(  q+ \varphi \right) + F(w,W), \label{c4defini-N-term} 
\end{eqnarray}
and $F(w,W)$ is defined by \eqref{c4defini-F-1}. Let us remark  that  equation \eqref{c4equa-Q} is quite the same as in the  classical nonlinear heat equation apart for the extra term $G.$  Let us point out that this term has two important features.
On the one hand, it is a novel term, with respect to old literature (\cite{BKnon94} and \cite{MZdm97} in particular), very delicate to control, which makes our paper completely relevant. On the other hand, we will show in Lemma \ref{size-G} below that $G$ is exponentially small (in $s$), which means that its contribution will not affect the dynamics, which lay in $s^{-i}$ scales (with possible logarithmic corrections), as one may see from Definition \ref {defini-shrinking-set-S-t} of the shrinking set below, particularly item (i) with the estimates in the blowup region $P_1(t)$.\\
In the following, we recall some properties of the linear operator $\mathcal{L} $ and the potential $ V$. 

\begin{center}
   \textbf{Operator $\mathcal{L}$}
\end{center}
Operator $\mathcal{L}$ is  self-adjoint in  $\mathcal{D} (\mathcal{L}) \subset L^2_\rho (\mathbb{R}^N),$  where  $L^2_\rho(\mathbb{R}^N)$  defined  as  follows
$$   L^2_\rho  (\mathbb{R}^N) = \left\{        f \in L^2_{loc} (\mathbb{R}^N) \left| \right. \int_{\mathbb{R}^N}  |f(y)|^2 \rho (y) dy  < + \infty   \right\},$$
and
$$ \rho (y):  =  \frac{e^{-\frac{|y|^2}{4}}}{ (4\pi )^{\frac{N}{2}}}. $$
Besides, its spectrum set is explicitly given by
$$   \text{Spec} (\mathcal{L}) =  \left\{    1 -  \frac{m}{2}    \left|  \right.      m \in \mathbb{N}\right\}   .$$
Accordingly   to the eigenvalue $\lambda_m = 1  - \frac{m}{2}$, the correspond  eigen-space  is  given  by 
$$     \mathcal{E}_m =  \left<   h_{m_1} (y_1). h_{m_2} (y_2).... h_{m_N} (y_N)   \left|  \right.  m_1 + ...+ m_N = m    \right>, $$
where   $h_{m_i}$  is  the  (re-scaled) Hermite  polynomial in one  dimension. 

\medskip
- \textit{Key properties of potential $V$:} 

\begin{itemize}
\item[$(i)$] The  potential  $V(., s) \to 0$  in $L^2_\rho(\mathbb{R}^N)$ as  $s \to + \infty$:  In particular,    in the  region  $|y| \leq K_0 \sqrt s$ (the singular domain),     $V$ has  some weak perturbations    on   the   effect of   operator  $\mathcal{L}$.
\item[$(ii)$]   $V(y,s)$ is almost a constant  on the region $|y| \geq K_0 \sqrt s$:   For all  $\epsilon > 0$, there  exists   $ \mathcal{C}_\epsilon > 0$ and  $s_\epsilon$ such that 
$$     \sup_{ s \geq  s_\epsilon, \frac{|y|}{ \sqrt s}  \geq \mathcal{C}_\epsilon} \left|  V(y,s)   - \left( -\frac{p}{p-1}\right)  \right|   \leq  \epsilon. $$
Note  that   $ - \frac{p}{p-1 }  <   -1  $  and  that  the  largest  eigenvalue  of  $\mathcal{L}$ is  $1$. Hence, roughly speaking, we may assume that  $\mathcal{L} + V$ admits a  strictly  negative  spectrum. Thus,  we  can easily control   our solution in the  region $\{ |y| \geq K_0 \sqrt{s} \}$ with $K_0$ large enough.
 \end{itemize}

\medskip 
 From the preceding properties, it appears that the operator   of $\mathcal{L} + V$  does not share  the  same  behaviour inside  and outside   of  the  singular  domain $\{ |y| \leq   K_0 \sqrt{s}\}.$ Therefore, it is natural  to   decompose  every    $r  \in L^\infty(\mathbb{R}^N)$ as follows:
\begin{equation}\label{c4R=R-b+R-e}
r (y)=   r_b (y) +  r_e (y)  \equiv  \chi (y,s)   r(y) + (1- \chi (y,s) )    r (y),
\end{equation} 
where   $\chi (y,s)$  is defined as follows 
\begin{equation}\label{c4defini-chi-y-s}
\chi (y,s)  = \chi_0 \left(  \frac{|y|}{K_0 \sqrt s}   \right),
\end{equation}
recalling that  $\chi_0 $ is given by \eqref{c4defini-chi-0}.   From the above  decomposition, we immediately
have the following: 
\begin{eqnarray*}
\text{Supp }(r_b)  & \subset & \{ |y| \leq 2 K_0 \sqrt{s} \},\\
\text{Supp }(r_e) & \subset & \{ |y| \geq K_0 \sqrt{s}\}.
 \end{eqnarray*}
In the following  we  are interested in expanding    $r_b$  in   $L^2_\rho \left( \mathbb{R}^N \right)$ according to the basis  which  is created by the eigenfunctions of  operator $\mathcal{L}$:
\begin{eqnarray*}
r_b (y)  & =  &r_0  + r_1 \cdot y +   y^T \cdot r_2 \cdot y  - 2 \text{ Tr}(r_2)  + r_-(y),
\end{eqnarray*}
 or
\begin{eqnarray*}
r_b (y)  & =  &r_0  + r_1 \cdot y +r_\perp(y),
\end{eqnarray*}
where  
\begin{equation}\label{c4defini-R-i}
r_i =   \left(   P_\beta ( r_b )  \right)_{\beta \in \mathbb{N}^N, |\beta|= i}, \forall  i \geq 0,
\end{equation}
with $P_\beta(r_b)$  being  the projection  of  $r_b$ on   the   eigenfunction    $h_\beta$ defined as follows:
\begin{equation}\label{c4defin-P-i}
P_\beta (r_b) =  \int_{\mathbb{R}^N}  r_b   \frac{h_\beta}{\|h_\beta\|_{L^2_\rho(\mathbb{R}^N)}}   \rho dy, \forall \beta \in \mathbb{N}^N.
\end{equation}
Besides that, we also dennote
\begin{equation}\label{c4defini-R-perp}
r_\perp =  P_\perp (r)  =   \sum_{\beta \in \mathbb{N}^N, |\beta| \geq 2}   h_\beta P_\beta (r_b),
\end{equation}
and
\begin{equation}\label{c4defini-R--}
r_-   =   \sum_{\beta \in \mathbb{R}^N, |\beta| \geq 3}   h_\beta P_\beta (r_b).
\end{equation}
In other  words,  $r_\perp $ is   the part  of  $r_b$  which    is  orthogonal   to the   eigenfunctions   corresponding to   eigenvalues $0$ and  $1$  and  $r_-$  is  orthogonal   to the  eigenfunctions   corresponding to   eigenvalues $ 1, \frac{1}{2}$ and  $0$.     We  should note that  $r_0$ is a scalar,  $r_1$ is a vector  and  $r_2$ is a  square  matrix   of   order $N;$ they are all components of $r_b$ and not $r$. Finally,   we   write   $r$  as follows
\begin{eqnarray}
r (y) &  =  &  r_0 + r_1 \cdot y  +  y^T \cdot r_2 \cdot y  - 2 \text{ Tr}( r_2)   + r_- + r_e (y).\label{c4represent-non-perp}
\end{eqnarray}
or
\begin{eqnarray}
r (y) &  =  &  r_0 + r_1 \cdot y  + r_\perp(y) + r_e (y).\label{c4represent-non-perp1}
\end{eqnarray}

\subsection{ Localization  variable} In this part,  we will state the rigorous  form of problem \eqref{c4rescaled-function-U}  in region $P_2$.  Using  the equation of \eqref{equa-u-non-local} then $\mathcal{U}$ defined by  \eqref{c4rescaled-function-U} satisfies:
\begin{equation}\label{equa-rescaled-rigogous}
    \partial_t  \mathcal{U} =  \Delta_\xi \mathcal{U}   + \mathcal{U}^p   + \left( \frac{1}{p-1}   \frac{\tilde{\theta}'_\tau(\tau)}{\tilde{\theta}(\tau)} - \rho(x) \right) \mathcal{U},
\end{equation}
where 
\begin{equation}\label{defi-tilde-theta-tau}
    \tilde{\theta}(\tau) = \theta(t) = \theta(\tau \varrho(x) +t(x)), \text{ and } \rho(x) \text{ defined as in } \eqref{c4defini-theta}.
\end{equation}

\subsection{Shrinking set }
Below, we  aim  to construct a   special set where the behaviour of the solution $U$ of equation \eqref{equa-U-theta-'-theta} can be controlled.

\begin{definition}[Definition of  $S(t)$]\label{defini-shrinking-set-S-t} Let us consider positive constants   $K_0, \epsilon_0, \alpha_0, A, \delta_0, C_0, \eta_0$ and  take $t \in [0, T)$ for some  $T>0.$ Then,  we introduce  the following  set    
$$ S(  K_0 ,\epsilon_0, \alpha_0, A, \delta_0, C_0, \eta_0, t) \quad   (S(t) \text{ in short}),$$
as the subset  of $C^{2}\left( \Omega    \right) \cap C(   \bar \Omega ),$  containing  all functions  $U$  satisfying the following conditions:
\begin{itemize}
\item[$(i)$] \textbf{Estimates  in  $P_1(t)$}: We have  that the function  $q$ introduced   in  \eqref{defi-q=w-varphi} belongs to $V_{ A} (s),  s = -  \ln(T-t),$ where    $V_{ A} (s)$ is   a  subset of  all functions $r$  in $L^\infty (\mathbb{R}^N),$  satisfying the following estimates:
\begin{eqnarray*}
 &&| r_i   |    \leq   \frac{A}{s^{2}},  (i=0,1),  \text{ and  }  | r_2 | \leq  \frac{A^2 \ln s}{s^2},\\
&&|r_-(y)|  \leq    \frac{A^2}{s^2} (1 + |y|^3)   \text{ and }  \left|   \left(   \nabla r  \right)_\perp    \right|   \leq  \frac{A}{s^2} (1 + |y|^3),  \forall y \in \mathbb{R}^N,\\
&&\| r_e \|_{L^\infty(\mathbb{R}^N)} \leq    \frac{A^2}{ \sqrt s},
\end{eqnarray*}      
where the definitions  of   $r_i $, $ r_-, (\nabla r)_\perp$ and $r_e$   are given by \eqref{c4defini-R-i}, \eqref{c4defini-R-perp}, \eqref{c4defini-R--}, and  \eqref{c4R=R-b+R-e}, respectively, depending on the similarity variables defined by \eqref{similarity-variable}.
\item[$(ii)$] \textbf{ Estimates in  $ P_2 (t)$:}     For all   $|x|    \in \left[  \frac{K_0}{4 }  \sqrt{(T-t)|\ln(T-t)|},  \epsilon_0 \right], \tau (x,t)  =   \frac{t- t(x)}{\varrho (x)}$    and $|\xi|  \leq  \alpha_0  \sqrt{|\ln \varrho(x)|},$    we have  the following
\begin{eqnarray*}
\left|   \mathcal{U}  (x, \xi, \tau(x,t))  - \hat{\mathcal{U}}(\tau(x,t))  \right|  & \leq &   \delta_0, \\
\left|  \nabla_\xi  \mathcal{U}  (x, \xi, \tau(x,t))  \right|  & \leq  & \frac{C_0}{\sqrt{|\ln \varrho(x)|}},\\
\end{eqnarray*}
where   $\mathcal{U},$  $\hat{\mathcal{U}}$ and  $\varrho(x) $  defined as in \eqref{c4rescaled-function-U},  \eqref{c4defini-theta} and \eqref{c4defin-hat-mathcal-U-tau},   respectively.
\item[$(iii)$] \textbf{Estimates in  $P_3(t)$:}  For all  $  x \in  \{ |x| \geq \frac{\epsilon_0}{4} \} \cap  \Omega,$ we have 
\begin{eqnarray*}
\left|    U(x,t)  -  U(x,0)  \right|  & \leq  &  \eta_0,\\
\left|  \nabla U(x,t) -  \nabla e^{t \Delta} U(x, 0) \right| & \leq  &  \eta_0. 
\end{eqnarray*}
\end{itemize}
\end{definition}
Using the definition of $S(t)$, we have the following 
\begin{lemma}[Growth estimates for  $q$ belonging to $V_{A}(s)$] \label{lemma-properties-V-A-s}  Let us consider  $K_0 \geq 1$ and  $A \geq 1$. Then, there exists $s_1=s_1(A,K_0)$ such that for all $ s \geq s_1$ and  $q\in V_{A}(s),$ we have the following estimates:  
$$   |q (y,s)| \leq  \frac{C (K_0)A^2 }{\sqrt s }   \text{ and }    \left| q (y,s)\right| \leq   \frac{C (K_0) A^2 \ln s}{s^2} (1 +   |y|^3). $$
In particular, we  obtain  the following  improved  estimate in the blowup region
\begin{eqnarray*}
\|  q\|_{L^\infty}({\{|y| \leq K_0 \sqrt{s} \})}  \leq  \frac{C(K_0) A  }{\sqrt{s}}  .
\end{eqnarray*}
\end{lemma}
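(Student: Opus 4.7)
The plan is to reduce the three estimates to a routine term-by-term accounting on the decomposition already built into $V_A(s)$. Using the cutoff $\chi$ from \eqref{c4defini-chi-y-s}, I would split $q = q_b + q_e$ so that $q_b$ is supported in $\{|y|\leq 2K_0\sqrt s\}$ and $q_e$ in $\{|y|\geq K_0\sqrt s\}$, and then expand $q_b$ in the eigenbasis of $\mathcal{L}$ as in \eqref{c4represent-non-perp}:
\begin{equation*}
q_b(y) = q_0 + q_1\cdot y + y^T q_2\, y - 2\,\text{Tr}(q_2) + q_-(y),
\end{equation*}
each of whose components is directly controlled by item~$(i)$ of Definition~\ref{defini-shrinking-set-S-t}.

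For the global pointwise bound, on $\{|y|\leq 2K_0\sqrt s\}$ I would plug the size restriction on $|y|$ into each term, obtaining successively $|q_0|\leq A/s^2$, $|q_1\cdot y|\leq CAK_0/s^{3/2}$, $|y^T q_2 y - 2\,\text{Tr}(q_2)|\leq CA^2K_0^2\ln s/s$, and
\begin{equation*}
|q_-(y)|\leq \frac{A^2}{s^2}\bigl(1+(2K_0\sqrt s)^3\bigr)\leq \frac{CA^2 K_0^3}{\sqrt s},
\end{equation*}
the $q_-$ contribution being dominant and delivering the desired $1/\sqrt s$ rate; on the complementary region $\{|y|\geq K_0\sqrt s\}$ the direct bound $\|q_e\|_{L^\infty(\mathbb{R}^N)}\leq A^2/\sqrt s$ suffices. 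For the $(1+|y|^3)$-weighted estimate, on the inner region each polynomial contribution of $q_b$ is automatically dominated by $CA^2\ln s(1+|y|^3)/s^2$ (with the $q_2$-mode forcing the $\ln s$ factor) and $|q_-|$ satisfies this target by its very definition; on the outer region the inequality $A^2/\sqrt s\leq CA^2\ln s(1+|y|^3)/s^2$ holds because $(1+|y|^3)\geq K_0^3 s^{3/2}$ there.

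The improved bound in the blowup region $\{|y|\leq K_0\sqrt s\}$ follows by restricting the same analysis, since there $\chi\equiv 1$ forces $q=q_b$, and the $q_-$ piece again dominates. The main obstacle is pinning down the sharp power of $A$ stated by the lemma: the crude polynomial sum yields a constant multiplied by $A^2$, whereas the sharper conclusion relies on the gradient bound $|(\nabla q)_\perp|\leq A/s^2(1+|y|^3)$, which carries only one factor of $A$. To exploit it, I would integrate the gradient inequality radially from the origin to upgrade the pointwise control on $q_-$ inside the blowup region, using that at $y=0$ the only non-trivial contribution comes from modes already bounded by $A/s^2$ or $A^2\ln s/s^2$. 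Choosing $s_1(A,K_0)$ large enough then absorbs all subdominant pieces into a single constant $C(K_0)$, completing the proof. Everything else is a direct substitution of the $V_A(s)$ bounds into the explicit expansion of $q_b$.
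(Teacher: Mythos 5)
Your treatment of the first two estimates is correct and is exactly the paper's intended argument: on $\{|y|\leq 2K_0\sqrt s\}$ substitute each $V_A(s)$ bound into the expansion \eqref{c4represent-non-perp}, with the $q_-$ piece dominating the uniform $1/\sqrt s$ bound and the $q_2$ piece producing the $\ln s$ in the weighted bound, and on $\{|y|\geq K_0\sqrt s\}$ use $\|q_e\|_{L^\infty}\leq A^2/\sqrt s$ together with $(1+|y|^3)\geq K_0^3 s^{3/2}$. The paper dispatches the whole lemma in one line along precisely these lines.

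The gradient-integration idea you offer for the improved estimate, however, does not close. Integrating $|(\nabla q)_\perp(y)|\leq \frac{A}{s^2}(1+|y|^3)$ radially gives
\begin{equation*}
|q_-(y)-q_-(0)|\leq \frac{A}{s^2}\int_0^{|y|}(1+r^3)\,dr = \frac{A}{s^2}\Bigl(|y|+\tfrac{1}{4}|y|^4\Bigr),
\end{equation*}
and at $|y|\sim K_0\sqrt s$ the right-hand side is of order $AK_0^4$, with no decay in $s$ at all --- far weaker than the target $C(K_0)A/\sqrt s$. The $(1+|y|^3)$ weight renders the gradient bound useless for upgrading $L^\infty$ control near the edge of the blowup region, so this route cannot recover the missing factor of $A$.

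The actual resolution is that the $A^2$ in the $r_-$ bound of Definition~\ref{defini-shrinking-set-S-t} is a misprint: in the \cite{MZnon97} framework that this paper follows, the negative part of the shrinking set satisfies $|r_-(y)|\leq \frac{A}{s^2}(1+|y|^3)$ with a single power of $A$, mirroring the $(\nabla r)_\perp$ bound, and this is corroborated within the paper itself by the a priori estimate $\|q_-(\cdot,s)/(1+|y|^3)\|_{L^\infty}\leq A/(2s^2)$ claimed in Proposition~\ref{c4propo-priori-P-1-later}. With that reading, on $\{|y|\leq K_0\sqrt s\}$ one has $q_e\equiv 0$, $|q_0|+|q_1\cdot y|\leq CK_0 A/s^{3/2}$, $|y^T q_2 y - 2\,\mathrm{Tr}(q_2)|\leq C(K_0)A^2\ln s/s$, and $|q_-|\leq C(K_0)A/\sqrt s$, and the $q_2$ contribution is absorbed into $C(K_0)A/\sqrt s$ once $s_1(A,K_0)$ is taken large enough that $A\ln s\leq\sqrt s$. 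So the improved estimate really is immediate --- by the same direct substitution you used for the first two --- once the typo is corrected.
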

\begin{proof}
The proof immediately arises by adding the given bounds in Definition \ref{defini-shrinking-set-S-t}. 
\end{proof}

\begin{remark}[Universality constant]
In our proof, we  introduce a lot of parameters, then, the universality  upper bound will depend on these parameters. for more convenience, from now on, we denote the universality constant by $C$ as long as it  only depends on $K_0, \Omega, r, \gamma, p$ and  $N$, intrinsic constants. However, once it depends more on extra-parameters, example $C_2$ as in Proposition \ref{propo-bar-mu-bounded} below, we will write 
$$C(C_2).$$
\end{remark}

\medskip
Next  we  derive the  dynamic  of $ \theta$ defined as in  \eqref{defi-theta-by-U}: 
\begin{proposition}[Dynamic of $\theta$]\label{propo-bar-mu-bounded} Let us consider  \eqref{subcritical-condition}
with   $\Omega$ a  bounded domain with smooth boundary, and $C_2>0$.  Then, there  exists  $K_{2}> 0$ such that  for all  $K_0 \geq K_{2},  \delta_{0 } > 0,$ 
we can find     $\epsilon_{2} (K_0, \delta_{0}, C_2) > 0$ such that   for all   $\epsilon_0  \leq \epsilon_2$  and  $A \geq 1, C_0 > 0, \eta_0 > 0$, and $\alpha_0 >0 $,  there exists $T_{2}   > 0$ such that  for  all  $T  \leq T_2$ the following holds:  Assuming   $U$ is a non negative  solution  of   equation \eqref{equa-U-theta-'-theta}    on $[0, t_1],$ for some $t_1 < T$ and  $U(t) \in S(K_0, \epsilon_0, \alpha_0, A, \delta_0, C_0,\eta_0,t)= S(t) $ for all  $t \in [0, t_1]$ with initial data $U(0)=U_0$  satisfying the following estimate
\begin{equation}\label{condition-integral-U-0}
  \frac{1}{C_2} \leq   \def\avint{\mathop{\,\rlap{--}\!\!\int}\nolimits} \avint_\Omega
U^r_0\,dx  \leq C_2,
\end{equation}
then there hold:
\begin{itemize}
\item[$(i)$]  For all $t\in [0,t_1],$ we have   $\theta (t) > 0$ and 
 \begin{eqnarray}
\frac{1}{C(C_2)} \leq \theta  (t)  \leq   C(C_2). \label{bound-bar-theta}
\end{eqnarray}

\noindent
Moreover, there exists $\varepsilon := \varepsilon(N,r,p) >0$ such that  for all $t \in (0,t_1)$, we have
\begin{eqnarray}
\left| \theta' (t) \right|   & \leq  &   (T-t)^{-1 + \varepsilon} \label{bound-bar-theta-'}.
\end{eqnarray}  
 \item[$(ii)$] In particular, if  $U \in S(t)$  for all   $ t \in [0, T)$, then   there exists a constant $\theta^*(\Omega, r,p,N,\gamma, U(0))>0$ such that
 $$ \theta(t)  \to \theta^* \text{ as } t \to T.$$
\end{itemize} 
\end{proposition}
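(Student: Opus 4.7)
\textbf{Proof proposal for Proposition \ref{propo-bar-mu-bounded}.}

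The plan is to derive an autonomous estimate for $J(t) := \mathop{\,\rlap{-}\!\!\int}_\Omega U^r\,dx$, and then transfer all conclusions to $\theta$ via the algebraic identity $\theta(t) = J(t)^{-\mu}$, where $\mu := \gamma/(1 - r\gamma/(p-1))$ is a finite constant thanks to the hypothesis $r\gamma \neq p-1$. The bounds \eqref{bound-bar-theta}, \eqref{bound-bar-theta-'} and the existence of $\theta^*$ will all be consequences of suitable estimates on $J$ and $J'$.

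\emph{Step 1 (ODE for $J$).} Differentiating the definition of $J$, using equation \eqref{equa-U-theta-'-theta}, and integrating by parts (Neumann boundary conditions eliminate the boundary terms), I obtain
\begin{equation*}
J'(t) = \frac{r}{|\Omega|}\Bigl[-(r-1)\int_\Omega U^{r-2}|\nabla U|^2\,dx + \int_\Omega U^{r+p-1}\,dx\Bigr] + r\Bigl(\frac{1}{p-1}\frac{\theta'(t)}{\theta(t)} - 1\Bigr) J(t).
\end{equation*}
Since $\theta = J^{-\mu}$ one has $\theta'/\theta = -\mu\,J'/J$; substituting this and solving for $J'$ yields
\begin{equation*}
J'(t) = \Bigl(1 - \frac{r\gamma}{p-1}\Bigr)\frac{r}{|\Omega|}\Bigl[-(r-1)\int_\Omega U^{r-2}|\nabla U|^2\,dx + \int_\Omega U^{r+p-1}\,dx - |\Omega|\,J(t)\Bigr],
\end{equation*}
an identity in which $\theta$ no longer appears; the factor $(1 - r\gamma/(p-1))^{-1}$ generated by the inversion is exactly where the non-resonance hypothesis $r\gamma \neq p-1$ is used (and this is the obstruction flagged in the remark following the theorem).

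\emph{Step 2 (integral bounds through $S(t)$).} Setting $\varepsilon_1 := N/2 - r/(p-1) > 0$, which is available thanks to the Turing condition \eqref{subcritical-condition}, I estimate each integral on the right-hand side by splitting $\Omega = P_1(t) \cup P_2(t) \cup P_3(t)$ and using the a priori control encoded in $U(t) \in S(t)$. In $P_1(t)$, passing to the similarity variables \eqref{similarity-variable} and using the bounds on $q = w - \varphi$ from $V_A(s)$ (including the gradient bound on $(\nabla r)_\perp$) gives $W \sim \varphi$ and $|\nabla W| \lesssim |\nabla \varphi|$; an explicit rescaling produces a contribution of order $(T-t)^{-1+\varepsilon_1}|\ln(T-t)|^{N/2}$ for both $\int U^{r+p-1}$ and $\int U^{r-2}|\nabla U|^2$. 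In $P_2(t)$, the bounds $|\mathcal{U} - \hat{\mathcal{U}}| \leq \delta_0$ and $|\nabla_\xi \mathcal{U}| \leq C_0/\sqrt{|\ln \varrho(x)|}$ yield pointwise estimates $U(x,t) \lesssim |x|^{-2/(p-1)}|\ln|x||^{1/(p-1)}$ and an analogous one for $|\nabla U|$; integrating in polar coordinates on the annulus with inner radius $\sim \sqrt{(T-t)|\ln(T-t)|}$ produces either a uniformly bounded contribution (when $r/(p-1) < (N-2)/2$) or one of order $(T-t)^{-1+\varepsilon_1}|\ln(T-t)|^{O(1)}$ in the borderline range $r/(p-1) \in [(N-2)/2, N/2)$. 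In $P_3(t)$, the closeness $|U - U_0| \leq \eta_0$ gives a uniformly bounded contribution. Absorbing the logarithms at the cost of shrinking the exponent (e.g.\ $\varepsilon := \varepsilon_1/2$), I reach the master bound
\begin{equation*}
|J'(t)| \leq C (T-t)^{-1+\varepsilon}, \qquad t \in (0, t_1).
\end{equation*}

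\emph{Step 3 (bounds and limit for $\theta$).} Integrating the master bound gives $|J(t) - J(0)| \leq C T^\varepsilon/\varepsilon$. Choosing $T \leq T_2$ small enough so that this is below $\tfrac{1}{2C_2}$, together with \eqref{condition-integral-U-0}, forces $\tfrac{1}{2C_2} \leq J(t) \leq 2 C_2$ on $[0, t_1]$; since $\theta = J^{-\mu}$ this is exactly \eqref{bound-bar-theta}. The bound \eqref{bound-bar-theta-'} then follows from $\theta'(t) = -\mu J(t)^{-\mu - 1} J'(t)$ applied to the master estimate. For assertion (ii): when $U \in S(t)$ on the entire interval $[0,T)$, the master bound shows that $J'$ is integrable near $t = T$, hence $J(t)$ is Cauchy as $t \to T$, and its limit $J^* \geq \tfrac{1}{2C_2}$ is strictly positive; then $\theta^* := (J^*)^{-\mu}$ is a positive finite constant depending only on $\Omega, r, p, N, \gamma$ and $U(0)$, as required.

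\emph{Main obstacle.} The heart of the proof is Step 2, specifically the bound on $\int_\Omega U^{r+p-1}$ in the borderline range $r/(p-1) \in [(N-2)/2, N/2)$, where the integral is divergent in the self-similar scale and one must combine the $P_1$ and $P_2$ estimates carefully to recover a time singularity integrable at $T$. The gain $(T-t)^{\varepsilon_1}$ ultimately comes from the strict Turing inequality $r/(p-1) < N/2$; any softer hypothesis would preclude the master bound and the argument would break down.
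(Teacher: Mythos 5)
Your proof is correct, and the technical core — splitting $\Omega = P_1(t)\cup P_2(t)\cup P_3(t)$, using the $S(t)$ bounds region by region, and extracting integrability in time from the strict Turing inequality $r/(p-1) < N/2$ — is exactly the one the paper runs in the appendix. What differs is the bookkeeping, and your choice is arguably cleaner. The paper works directly with $\theta$ and splits the argument into the sub-critical ($1-r\gamma/(p-1)>0$) and super-critical ($<0$) cases, first proving the pointwise bound \eqref{bound-bar-theta} by estimating $\int_\Omega U^r$ from above and below using the $P_1$, $P_2$, $P_3$ decomposition (the lower bound on $\int_{P_3}U^r$ exploits $U\ge 1/2$ in $P_3$), and only then turns to $\theta'$ by differentiating \eqref{defi-theta-by-U} and expanding $\int U^{r-1}U_t$ through the equation — this is \eqref{integral-U-t-U-r-1}--\eqref{express-theta-'}. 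You instead first close the implicit system for $J=\mathop{\,\rlap{-}\!\!\int}_\Omega U^r$, which eliminates $\theta'$ algebraically and produces an expression for $J'$ that is free of $\theta$ altogether; because $J$ enters $\theta$ only through the power $-\mu$, the sign of $1-r\gamma/(p-1)$ becomes irrelevant to the structure of the argument, and your Step 3 deduces \eqref{bound-bar-theta} by integrating the master bound $|J'|\le C(T-t)^{-1+\varepsilon}$ from the initial normalization \eqref{condition-integral-U-0}, rather than by a separate pointwise estimate. Both routes estimate essentially the same three integrals with the same splitting, so the cost is comparable; what your route buys is the automatic unification of the two sign cases, a derivation in which $\theta'$ never appears self-referentially, and a cleaner passage from item $(i)$ to item $(ii)$ via the Cauchy criterion for $J$. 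Two small points worth keeping in mind: your constant $C$ in the master bound inherits a dependence on $\eta_0,C_0,\delta_0,K_0,A,C_2$ through the $S(t)$ estimates, and the smallness of $T_2$ in Step 3 must be taken after these are fixed, which matches the order of quantifiers in the statement; and in Step 2 it is the lower bound $U\ge 1/2$ on $P_3$ (from Definition \ref{defini-shrinking-set-S-t} $(iii)$ and the choice of initial data) that guarantees the $U^{r-2}$ factor causes no trouble when $r<2$ — you use the bounds from $S(t)$ implicitly, but it is worth making this explicit since the sign of $r-2$ is not constrained by \eqref{subcritical-condition}.
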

\begin{proof}
We kindly refer the readers to see the details of the  proof in Appendix \ref{appexidix-propo-theta}.
\end{proof}

\subsection{Constructing initial data } 
In  this  part, we want to  build initial data $U_0 \in S(0)$ for equation \eqref{equa-U-theta-'-theta}. To this end we follow a similar approach with one developed in \cite{DZM3AS19}. Firstly, we introduce  the following cut-off function  
\begin{eqnarray}
\chi_1 (x)   & = &  \chi_0 \left(   \frac{|x|}{ \sqrt T |\ln T|}   \right),  \label{c4defini-chi-1} 
\end{eqnarray}
where  $\chi_0$ defined in \eqref{c4defini-chi-0}. Next,  we  introduce  $H^*$ as a suitable modification of the final asymptotic profile in the intermediate region:   
\begin{equation}\label{c4defini-H-epsilon-0}
H^*  (x)  =  \left\{   \begin{array}{rcl}
&  &  \left[  \displaystyle \frac{(p-1)^2}{8p} \frac{|x|^2}{|\ln|x||}    \right]^{ -\frac{1}{p-1}}, \quad  \forall   |x| \leq  \min\left(  \frac{1}{4} d (0 ,\partial \Omega), \frac{1}{2}  \right), x \neq 0, \\[0.7cm]
&  & 1,   \quad   \forall   |x| \geq  \frac{1}{2}  d(0, \partial \Omega)  .\\[0.5cm]
\end{array}      \right.
\end{equation}
Now for any  $(d_0, d_1 ) \in   \mathbb{R}^{1 + N}$, we define initial data
\begin{eqnarray}
U_{d_0, d_1}(x,0)  & = &    H^*(x) \left(  1 -  \chi_1 (x)    \right) ,\nonumber\\
& = & T^{-\frac{1}{p-1}}  \left[  \varphi \left( \frac{x}{\sqrt{T}}, - \ln s_0  \right)   +  (d_0 + d_1 \cdot z_0)  \chi_0 \left(  \frac{|z_0|}{\frac{K_0}{32}}\right) \right]  \chi_1 (x) \label{c4defini-initial-data},
\end{eqnarray}
where   $ z_0 = \displaystyle  \frac{ x}{ \sqrt{T  |\ln T|}} , s_0 = -\ln T$;     $\varphi, \chi_0,   \chi_1 $ and $ H^*  $ are defined as in  \eqref{defi-varphi}, \eqref{c4defini-chi-0},  \eqref{c4defini-chi-1} and  \eqref{c4defini-H-epsilon-0}, respectively.  

\medskip
Note that we can also  write the initial data in  the variable similarity \eqref{similarity-variable}
corresponding $y_0 = \frac{x}{\sqrt{T}} \in \Omega_{s_0}$ and $s_0 = - \ln T $.

- For $W(y_0,s_0)$: 
\begin{equation}\label{defi-W-s-0}
\begin{array}{rcl}
     W(y_0,   s_0) &=& \left[   \varphi(y_0,s_0) + (d_0 + z_0 \cdot d_1 ) \chi_0 \left(  \frac{|z_0|}{\frac{K_0}{32}}\right)   \right]\chi_1( y_0 \sqrt{T}) \\[0.2cm]
     &+&  T^{\frac{1}{p-1}} H^*(y_0 \sqrt{T}) (1- \chi_1 (y_0 \sqrt{T})).
\end{array}
\end{equation}
- For $w(y_0, s_0)$
\begin{equation}\label{defi-w-s-0}
\begin{array}{rcl}
     w(y_0,   s_0) &=& \left\{\left[   \varphi(y_0,s_0) + (d_0 + z_0 \cdot d_1 ) \chi_0 \left(  \frac{|z_0|}{\frac{K_0}{32}}\right)   \right]\chi_1( y_0 \sqrt{T}) \right. \\[0.2cm]
     &+& \left.  T^{\frac{1}{p-1}} H^*(y_0 \sqrt{T}) (1- \chi_1 (y_0 \sqrt{T})) \right\} \psi_{M_0}(y_0,s_0),
\end{array}
\end{equation}
for all $y_0 \in \Omega_{s_0}.$ We should point out that $\psi_{M_0}$ defined as in \eqref{c4defini-psi-M-0-cut} and $w$ vanishes  outside $\Omega_{s_0}$. 

- For  $q(y_0,s_0)$: 
\begin{eqnarray}\label{defi-q-y-0-s-0}
q(y_0,s_0)     =   w(y_0, s_0) - \varphi(y_0,s_0).     
\end{eqnarray}
In the following, we construct appropriate initial data of the form \eqref{c4defini-initial-data}, i.e. initial data which belong to the shrinking set $S(0).$
\begin{proposition}[Construction of initial data]\label{c4proposiiton-initial-data} There  exists $K_{3}> 0$ such that  for all  $K_0 \geq K_{3} $ and $  \delta_{ 3 } > 0,$ there exist $\alpha_{3} (K_0, \delta_{3}) > 0 $  and $C_{3}(K_0) > 0$ such that for  every  $\alpha_0 \in   (0, \alpha_{3}],$     there exists  $\epsilon_{3} (K_0, \delta_{3}, \alpha_0) > 0$ such that   for every  $\epsilon_0 \in (0,\epsilon_{3}]$  and  $A \geq 1,$ there exists  $T_{3} (K_0,  \delta_{3}, \epsilon_0, A,C_3 ) > 0 $  such that  for all  $T \leq    T_3$ and $ s_0 = - \ln T,$ then  the following  hold:

(I)  We can find a  set  $\mathcal{D}_{ A}     \subset [-2,2] \times [-2,2]^N$  such that   if we define the following mapping
\begin{eqnarray*}
\Gamma : \mathbb{R} \times \mathbb{R}^N     & \to &   \mathbb{R} \times \mathbb{R}^N\\
(d_0, d_1)   & \mapsto   &  \left( q_0, q_1  \right)(s_0),
\end{eqnarray*}
 then, $\Gamma$  is  affine,  one to  one   from   $\mathcal{D}_{A}$   to 	$\hat{\mathcal{V}}_A (s_0),$ where $\hat{\mathcal{V}}_A (s)$ is defined as  follows
\begin{equation}\label{c4defini-hat-mathcal-V-A}
\hat{\mathcal{V}}_A (s) =  \left[-\frac{A}{s^2}, \frac{A}{s^2} \right]^{1+N}.
\end{equation}  
Moreover,  we have     
$$\Gamma \left|_{\partial \mathcal{D}_{A}}  \right.   \subset   \partial \hat{\mathcal{V}}_A (s_0), $$
and 
\begin{equation}\label{c4deg-Gamma-1-neq-0}
 \text{deg} \left(  \Gamma \left. \right|_{ \partial  \mathcal{D}_{A} }  \right) \neq 0,
\end{equation}
  where   $q_0,q_1$  are defined  as  in \eqref{c4represent-non-perp} and  considered as the  components of $q_{d_0,d_1} (s_0)$  given  by \eqref{defi-q-y-0-s-0}.

(II) For all   $ (d_0, d_1) \in \mathcal{D}_{ A}$ then, initial data   $U_{d_0,d_1}$ defined in  \eqref{c4defini-initial-data}   belongs to  
$$S(K_0, \epsilon_0, \alpha_0, A, \delta_{3},   C_{3}, 0, 0)= S(0),$$
 where  $S( 0) $ is defined  in Definition \ref{defini-shrinking-set-S-t}.   Moreover, the following   estimates  hold
\begin{itemize}
\item[$(i)$] Estimates in  $P_1(0):$   We have   $q_{d_0,d_1} (s_0) \in \mathcal{V}_{K_0, A} (s_0)$ and  
$$ |q_0 (s_0)| \leq  \frac{A}{s_0^2}, \quad   |q_{1,j}  (s_0)| \leq \frac{A}{s_0^2}, \quad  |q_{2,i,j}(s_0)| \leq \frac{\ln s_0}{s_0^2},   \forall i,j \in \{1,..., N\},$$
$$ |q_- (y,s_0) | \leq 	\frac{1}{s_0^2} (|y|^3 + 1), \quad  |\nabla q_\perp (y,s_0) | \leq \frac{1}{s_0^2} (|y|^3 + 1),  \forall y \in \mathbb{R}^N,$$
and
$$  \|q_e   \|_{L^\infty}  \leq \frac{1}{\sqrt{s_0}}  
\text{ and } \|\nabla q (s_0)\|_{L^\infty} \leq \frac{1}{\sqrt{s_0}}  , $$
 where  the  components  of $q_{d_0, d_1} (s_0)$  are defined in  \eqref{c4defini-R-perp}.
\item[$(ii)$] Estimates  in  $P_2(0)$:  For  every $|x| \in \left[  \frac{K_0}{4} \sqrt{T |\ln T|}  , \epsilon_0 \right] , \tau_0(x) = - \frac{ t(x) }{ \varrho (x)}$  and $|\xi|  \leq 2\alpha_0 \sqrt{|\ln \varrho(x)|},$ we have
$$   \left|  \mathcal{U} (x, \xi, \tau_0(x) )    - \hat{\mathcal{U}} (\tau_0(x))  \right| \leq   \delta_{3} \text{ and }    |\nabla_\xi \mathcal{U} (x, \xi, \tau_0(x))| \leq \frac{C_{3}}{\sqrt{|\ln \varrho(x)|}} ,
 $$
\end{itemize}
where   $\mathcal{U}, \hat{\mathcal{U}}, $ and  $\varrho(x)$ are  defined  as in  \eqref{c4rescaled-function-U}, \eqref{c4defin-hat-mathcal-U-tau} and  \eqref{c4defini-theta}, respectively.
 \end{proposition}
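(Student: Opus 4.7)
The plan is to follow the topological shooting strategy of \cite{MZnon97,DZM3AS19}: exploit the affine dependence of the initial datum on $(d_0,d_1)$ to reduce the problem to an affine finite-dimensional map, then bound all other components of $q(s_0)$ \emph{uniformly} in $(d_0,d_1)$ so that the bounds imposed by the shrinking set $S(0)$ become automatic. I would begin by unpacking \eqref{c4defini-initial-data}--\eqref{defi-q-y-0-s-0} to write
$$q_{d_0,d_1}(y_0,s_0) \;=\; (d_0+z_0\cdot d_1)\,\chi_0\!\left(\tfrac{|z_0|}{K_0/32}\right)\,\chi_1(y_0\sqrt{T})\,\psi_{M_0}(y_0,s_0) \;+\; q^{(0)}(y_0,s_0),$$
where $q^{(0)}$ gathers every contribution independent of $(d_0,d_1)$ (namely the pieces built from $\varphi$, $H^*$ and their cutoffs). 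This splitting makes it manifest that $\Gamma:(d_0,d_1)\mapsto (q_0(s_0),q_1(s_0))$ is affine.

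Next I would compute the linear part of $\Gamma$ to leading order in $s_0=-\ln T$. The Gaussian weight $\rho$ has effective support on $|y_0|\lesssim 1$, while the three cutoffs $\chi_0(32|z_0|/K_0)$, $\chi_1(y_0\sqrt T)$, $\psi_{M_0}(y_0,s_0)$ are identically one there (their transitions occur at scales $|y_0|\sim\sqrt{s_0}$ or larger). Substituting $z_0=y_0/\sqrt{s_0}$ and using the orthogonality relations of the Hermite basis employed in \eqref{c4defin-P-i}, one obtains
$$q_0(s_0) \;=\; d_0 \;+\; O(s_0^{-2}),\qquad q_{1,j}(s_0) \;=\; \frac{c_1}{\sqrt{s_0}}\,d_{1,j} \;+\; O(s_0^{-2}),$$
for an explicit constant $c_1>0$, where the $O(s_0^{-2})$ remainders come from the Hermite projections of $q^{(0)}$ and are estimated by expanding $\varphi(y_0,s_0)$ in powers of $1/s_0$ together with the exponential smallness of the region where $\chi_1\neq 1$ (which lives at $|y_0|\gtrsim \sqrt{s_0|\ln T|}$).

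I would then set $\mathcal D_A := \Gamma^{-1}(\hat{\mathcal V}_A(s_0))$, which is the affine preimage of a closed box in $\mathbb R^{1+N}$ and hence a closed topological ball; its radii are $O(A/s_0^2)$ in $d_0$ and $O(A/s_0^{3/2})$ in $d_1$, both $\ll 1$ for $T\leq T_3$, so $\mathcal D_A\subset[-2,2]\times[-2,2]^N$. The restriction $\Gamma|_{\partial\mathcal D_A}$ is then an affine homeomorphism onto $\partial\hat{\mathcal V}_A(s_0)$, giving $\mathrm{deg}(\Gamma|_{\partial\mathcal D_A})=\pm 1$ and establishing \eqref{c4deg-Gamma-1-neq-0}. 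For part (II), the $P_1(0)$ estimates follow by combining the leading-order computation above with the explicit formula \eqref{defi-varphi}: $q_2(s_0)$ picks up a $\log s_0/s_0^2$ from the quadratic Taylor piece of $\varphi$, while $q_-$, $(\nabla q)_\perp$ and $q_e$ are controlled by the Gaussian weight and the exponentially small region where $\chi_1\neq 1$. The $P_2(0)$ estimates follow by writing $\mathcal U(x,\xi,\tau_0(x))$ from \eqref{c4rescaled-function-U} directly in terms of $H^*$ (cf. \eqref{c4defini-H-epsilon-0}) and comparing to $\hat{\mathcal U}(\tau_0(x))$ from \eqref{c4defin-hat-mathcal-U-tau}: both quantities agree up to $1/|\ln\varrho(x)|$ corrections, which can be absorbed into $\delta_3$ for $\epsilon_0$ small; the gradient bound is obtained by differentiating $H^*$ and rescaling by $\sqrt{\varrho(x)}$. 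The $P_3(0)$ conditions are trivial since $\eta_0=0$ at $t=0$.

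The main obstacle is step two: one must compute the leading behavior of $\Gamma$ sharply enough that the image \emph{covers} $\hat{\mathcal V}_A(s_0)$, not merely a smaller subset. This requires careful bookkeeping of (a) the Hermite projections of $\varphi(y_0,s_0)$ and the lower-order $\kappa N/(2ps_0)$ term, (b) the $H^*$-piece near the gluing annulus $|y_0|\sim\sqrt{s_0|\ln T|}$, and (c) the cutoff errors introduced by $\psi_{M_0}$ and $\chi_1$. All three must be shown to be $o(s_0^{-2})$ and $o(s_0^{-3/2})$ respectively in the two coordinates, so that they do not destroy the diagonal leading structure $(d_0,\,c_1 d_1/\sqrt{s_0})$ upon which the inversion of $\Gamma$ and the degree argument hinge.
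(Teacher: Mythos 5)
Your proposal follows the same affine-plus-topological-degree strategy as the paper, whose own proof simply cites \cite[Lemma~2.4]{MZnon97} because the shrinking set and the form of the initial data are taken verbatim from there; the key points you identify (the splitting $q=q^{(0)}+(d_0+z_0\cdot d_1)\times\text{cutoffs}$, the near-diagonal linear part, and the use of the Gaussian weight to neutralize the cutoff regions) are exactly those of the classical argument. One small correction to your closing paragraph: the constant offset of the affine map must be $o(s_0^{-2})$ in every coordinate (not $o(s_0^{-3/2})$ in the $q_1$-directions), since the target box $\hat{\mathcal V}_A(s_0)$ has side $A/s_0^2$ in all $1+N$ slots; this is, however, comfortably satisfied because the $(d_0,d_1)$-independent part of $q(\cdot,s_0)$ is radial, so its odd Hermite modes are not merely $O(s_0^{-2})$ but exponentially small.
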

\begin{proof}
Notably,  the shrinking set and initial data are the same as in \cite{MZnon97}, therefore for the proof of the current lemma we  kindly refer the reader to check \cite[Lemma 2.4]{MZnon97}.   
\end{proof}

\section{Proof of  Theorem  \ref{theorem-existence}}
This section, is devoted to the proof of our main result Theorem \ref{theorem-existence}. Its proof though is quite technical, and it requires the following auxiliary result. Then the proof of   Theorem \ref{theorem-existence} will be given in the end of the current section. 
\begin{proposition}[A solution  within  $S(t)$]\label{c4proposition-existence-U}  There exist  positive parameters  $T, K_0, \epsilon_0,$ $\alpha_0, A, \delta_0, C_0,  \eta_0$ such that  we can choose  a  couple  $(d_0, d_1) \in \mathbb{R}^{1 +N}$
 such that  the solution  $U$ of equation \eqref{equa-U-theta-'-theta} defined  on  $ \Omega \times  [0,T)$ and with initial data $U_{d_0, d_1}$ given  in \eqref{c4defini-initial-data} belongs to   
 $S(t) $  for all $ t \in [0,T))$, where $S(t)$ introduced in Definition \ref{defini-shrinking-set-S-t}.
\end{proposition}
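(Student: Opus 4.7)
The plan is to apply the classical topological shooting argument introduced by Merle--Zaag \cite{MZnon97} and adapted for non-local problems in \cite{DZM3AS19}. Initial data are parameterized by $(d_0,d_1)\in\mathcal{D}_A$ as in \eqref{c4defini-initial-data}, and by Proposition \ref{c4proposiiton-initial-data} such data belong to $S(0)$, with the two unstable components $(q_0,q_1)(s_0)$ covering $\hat{\mathcal{V}}_A(s_0)$ via the affine map $\Gamma$ of non-zero boundary degree. First I would fix the parameters in the order compatible with Propositions \ref{propo-bar-mu-bounded} and \ref{c4proposiiton-initial-data}: choose $K_0$ large, then $\alpha_0$ small, then $\delta_0,C_0,\eta_0$ moderate, then $A$ large enough to absorb the constants appearing in the a priori estimates, then $\epsilon_0$ small, and finally $T$ small. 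For each admissible pair I define the exit time
\[
t^*(d_0,d_1)=\sup\bigl\{t\in[0,T):U_{d_0,d_1}(\cdot,\tau)\in S(\tau)\ \text{for all}\ \tau\in[0,t]\bigr\},
\]
and suppose, for contradiction, that $t^*<T$ for every $(d_0,d_1)\in\mathcal{D}_A$.

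The core step is the \emph{reduction to a finite-dimensional problem}: assuming $U(t)\in S(t)$ for all $t\le t^*$, the only components of the shrinking set that can be saturated at $t^*$ are the two unstable ones $q_0$ and $q_1$. Setting $s^*=-\ln(T-t^*)$, one must show that all other conditions of Definition \ref{defini-shrinking-set-S-t} are strictly improved. In region $P_1$, I would project equation \eqref{c4equa-Q} onto the eigenspaces of $\mathcal{L}$, use the spectral properties of $\mathcal{L}+V$ recalled before Definition \ref{defini-shrinking-set-S-t}, the quadratic smallness of $B(q)$, the $O(s^{-2})$ decay of $R$, and the exponential smallness of $G$ guaranteed by Proposition \ref{propo-bar-mu-bounded}, to strictly improve the bounds on $q_2,q_-,(\nabla q)_\perp,q_e$. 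In region $P_2$, I would compare $\mathcal{U}$ given by \eqref{c4rescaled-function-U} to the ODE solution $\hat{\mathcal{U}}$ of \eqref{c4defin-hat-mathcal-U-tau} by integrating \eqref{equa-rescaled-rigogous}; the correction terms coming from $\Delta_\xi\mathcal{U}$ and $(\tilde\theta_\tau'/\tilde\theta)\mathcal{U}$ are negligible on $|\xi|\le\alpha_0\sqrt{|\ln\varrho(x)|}$, so the bounds on $\mathcal{U}-\hat{\mathcal{U}}$ and $\nabla_\xi\mathcal{U}$ are strictly improved. In region $P_3$, the right-hand side of \eqref{equa-U-theta-'-theta} is bounded there (using the uniform control of $\theta$ from Proposition \ref{propo-bar-mu-bounded}), so classical parabolic estimates yield continuous dependence on initial data and strict improvement of the $\eta_0$ bounds for $T$ small.

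Granted this reduction, the map
\[
\Phi:\mathcal{D}_A\longrightarrow\partial\hat{\mathcal{V}}_A(s^*),\qquad (d_0,d_1)\longmapsto \frac{(s^*)^2}{A}\bigl(q_0,q_1\bigr)(s^*),
\]
is well-defined. The transversality of the $(q_0,q_1)$-dynamics at the exit time (a consequence of the strict linear instability of $\mathcal{L}+V$ on the first two eigenspaces) ensures that $t^*$ depends continuously on $(d_0,d_1)$, hence $\Phi$ is continuous. Moreover, when $(d_0,d_1)\in\partial\mathcal{D}_A$, Proposition \ref{c4proposiiton-initial-data} places $(q_0,q_1)(s_0)$ on $\partial\hat{\mathcal{V}}_A(s_0)$, and transversality again forces $t^*(d_0,d_1)=0$, so $\Phi|_{\partial\mathcal{D}_A}$ coincides with a rescaling of $\Gamma|_{\partial\mathcal{D}_A}$, whose degree is non-zero by \eqref{c4deg-Gamma-1-neq-0}. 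This produces a continuous retraction of $\mathcal{D}_A$ (topologically a closed ball) onto $\partial\mathcal{D}_A$ with non-zero boundary degree, contradicting Brouwer's no-retraction theorem. Hence there exists $(d_0,d_1)\in\mathcal{D}_A$ with $t^*=T$, and the corresponding solution satisfies the conclusion of Proposition \ref{c4proposition-existence-U}.

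\textbf{Main obstacle.} The delicate part is the finite-dimensional reduction in region $P_1$, specifically handling the non-local correction $G(w,W)$ in \eqref{c4equa-Q} that distinguishes our situation from the classical heat equation treated in \cite{MZnon97}. One must exploit quantitatively that $\frac{1}{p-1}\bar\theta_s/\bar\theta-e^{-s}$ is exponentially small: combining $|\theta'(t)|\le (T-t)^{-1+\varepsilon}$ and the lower bound on $\theta$ from Proposition \ref{propo-bar-mu-bounded} yields a gain of order $e^{-\varepsilon s}$, which must be shown to be entirely negligible at the $A^2/s^2$ (or $A^2 s^{-2}\ln s$) scale of the shrinking set on every mode projection. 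Additionally, matching the $P_1$ estimates with the $P_2$ estimates on the overlap region $|y|\sim K_0\sqrt{s}$, and propagating pointwise control of $\nabla q$ through the non-self-adjoint transport part of $\mathcal{L}$, require careful weighted estimates that form the technical core of the full proof.
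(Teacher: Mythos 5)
Your proposal is correct and follows the same two-step strategy the paper uses: (1) the reduction to the finite-dimensional problem of controlling $(q_0,q_1)$ via a priori estimates in $P_1,P_2,P_3$ (this is exactly Proposition~\ref{c4proposition-reduction-finite}, with its item~(ii) providing the outgoing transversality you invoke for continuity of the exit map), and (2) the Merle--Zaag topological index/degree argument built on the non-degeneracy~\eqref{c4deg-Gamma-1-neq-0} to locate $(d_0,d_1)\in\mathcal{D}_A$ with $t^*=T$. Your identification of the non-local term $G$ as the genuine novelty, controlled via Proposition~\ref{propo-bar-mu-bounded} and its exponential smallness, matches the paper's own emphasis (cf.\ Lemma~\ref{size-G}).
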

\begin{proof}
This proposition plays a vital role in proving Theorem \ref{theorem-existence}.  However,  the conclusion is very classical and stems from a robustness argument is encountered in \cite{BKnon94}, \cite{MZdm97}, \cite{MZnon97}. In particular, we refer the readers to \cite[Proposition 4.4]{DZM3AS19}. In particular, the argument consists of the following two main steps:

- Step 1:   We reduce  our problem  to a   finite dimensional one. More  precisely,   we  prove that the task of controlling the asymptotic behaviour of  $U(t) \in  S(t)$ for all $t \in [0,T)$ is reduced to governing the behaviour of $(q_0,q_1) (s)$ in  $\hat{\mathcal{V}}_A (s)$ (see Proposition  \ref{c4proposition-reduction-finite} below).

- \textit{Step 2:}   In this step, we aim at  proving the existence of a $(d_0, d_1) \in \mathbb{R}^{1 + N}$ such that the solution of  the solution  $U$ of equation \eqref{equa-U-theta-'-theta}  and with initial data $U_{d_0, d_1}$ given  in \eqref{c4defini-initial-data} belongs to  $S(t)$ with suitable parameters.  Then, the conclusion follows from a topological argument based on Index theory. This completes the proof of the proposition.

\end{proof}
\begin{center}
    \textbf{Conclusion of Theorem \ref{theorem-existence}}
\end{center}
Let us fix positive parameters  $T, K_0, \epsilon_0, \alpha_0, A, \delta_0, C_0,\eta_0$ such that Propositions \ref{propo-bar-mu-bounded}, and  \ref{c4proposition-existence-U} hold true. Then, we obtain 
$$ U(t) \in S(t) \quad  \forall t \in [0,T).$$
Using item $(ii)$ of Proposition \ref{propo-bar-mu-bounded}, we derive
$$ \theta(t) \to \theta^*, \text{ as } t \to T, $$
for some $\theta^*= \theta^*(\Omega, p, \gamma,r) >0$. In particular, we also obtain the fact that the constructed solution satisfied 
$$ \left| \theta'(t) \right| \leq (T-t)^{-1 + \varepsilon },  $$
which yields
\begin{equation}\label{the-error-of theta}
    \left| \theta(t) - \theta^* \right| \leq (T-t)^{\varepsilon}.
\end{equation}

\medskip
Now we are ready to proceed with the proof of Theorem \ref{theorem-existence} which follows the same lines as in  \cite[pages 1306-1310]{DZM3AS19}. However, for readers convenience we present in the following the key estimates, whilst further details can be found in the aforementioned work.

- Proof of item $(i)$: By virtue of   in Definition \ref{defini-shrinking-set-S-t} $(i), (iii)$  and  equality  \eqref{c4defini-w-small}, we obtain
\begin{equation*}
    \left\| W(y,s) - \left( p-1 + \frac{(p-1)^2}{4p} \frac{|y|^2}{s} \right)^{-\frac{1}{p-1}} \right\|_{L^\infty(\Omega_s)} \leq \frac{C}{1 + \sqrt{s}},
\end{equation*}
which via \eqref{similarity-variable} yields 
\begin{eqnarray}
 & &\left\| (T-t)^{\frac{1}{p-1}}U(.,t) -  \left( p-1 + \frac{(p-1)^2}{4p} \frac{|.|^2}{ (T-t) |\ln(T-t)|} \right)^{-\frac{1}{p-1}} \right\|_{L^\infty(\Omega)} \label{estima-U-in chapter-5} \\
 & & \leq \frac{C}{1 + \sqrt{|\ln(T-t)|}}.\nonumber
\end{eqnarray}
From \eqref{defi-U-by-u},  \eqref{the-error-of theta} and \eqref{estima-U-in chapter-5},  we  directly  derive \eqref{estima-T-t-u-theorem-varphi-0}. The uniqueness of the blowup point follows by the result \cite[Theorem 2.1]{GKcpam89} in applying \eqref{estima-U-in chapter-5}, and for the complete argument for this point, the readers can see the same one in   . Finally the existence of the blowup profile $u^*$   is quite the same as in \cite[Proposition 3.5]{DZM3AS19}. This concludes  the proof of Theorem \ref{theorem-existence}.

\section{Behavior of $L^k$ norm at blowup time}
This part is devoted to the proof of Corollary \ref{corollary-u-L-k}.  Let us show some useful estimates for proof.

\begin{lemma}\label{lemma-t(x)} 
Let us consider  $t(x)$, defined as in \eqref{c4defini-t(x)-} for all $|x| \leq \epsilon_0$, and $\rho(x) =  T - t(x)$. Then, we have
$$ \rho(x) \sim \frac{8}{K_0^2} \frac{|x|^2}{|\ln|x||},  $$
and 
$$ \ln \rho (x) \sim 2 \ln|x|,$$
as $ x \to 0$.
\end{lemma}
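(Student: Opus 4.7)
The plan is to extract both asymptotic statements directly from the defining identity. Squaring \eqref{c4defini-t(x)-} gives
\begin{equation*}
|x|^{2}=\frac{K_{0}^{2}}{16}\,\varrho(x)\,|\ln \varrho(x)|,
\end{equation*}
which I would first use to observe that $\varrho(x)\to 0$ as $x\to 0$ (guaranteed by the choice of $\epsilon_{0}$ small in \eqref{c4defini-t(x)-}), so that $|\ln\varrho(x)|\to +\infty$.

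Next I would take the logarithm of the displayed identity. Since $\varrho(x)\in(0,1)$ for $|x|$ small, one has $|\ln\varrho(x)|=-\ln\varrho(x)$, and therefore
\begin{equation*}
2\ln|x|=\ln\!\Bigl(\tfrac{K_{0}^{2}}{16}\Bigr)+\ln\varrho(x)+\ln|\ln\varrho(x)|.
\end{equation*}
As $\varrho(x)\to 0$, the term $\ln\varrho(x)$ tends to $-\infty$ linearly in its own scale, while $\ln|\ln\varrho(x)|$ tends to $+\infty$ only logarithmically; in particular, $\ln|\ln\varrho(x)|=o(\ln\varrho(x))$. The constant $\ln(K_{0}^{2}/16)$ is also negligible. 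Dividing by $\ln\varrho(x)$ and sending $x\to 0$ then yields
\begin{equation*}
\frac{2\ln|x|}{\ln\varrho(x)}\longrightarrow 1,\qquad\text{i.e.}\qquad \ln\varrho(x)\sim 2\ln|x|,
\end{equation*}
which is the second claimed equivalent. Equivalently, $|\ln\varrho(x)|\sim 2|\ln|x||$ for $|x|$ small.

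Finally, I would substitute this into the starting identity: solving for $\varrho(x)$ gives
\begin{equation*}
\varrho(x)=\frac{16}{K_{0}^{2}}\,\frac{|x|^{2}}{|\ln\varrho(x)|}\sim\frac{16}{K_{0}^{2}}\,\frac{|x|^{2}}{2|\ln|x||}=\frac{8}{K_{0}^{2}}\,\frac{|x|^{2}}{|\ln|x||},
\end{equation*}
which is the first claimed equivalent. No genuine obstacle is expected here; the only mildly delicate point is justifying $\ln|\ln\varrho(x)|=o(\ln\varrho(x))$, which however is a standard elementary fact once $\varrho(x)\to 0$ is known.
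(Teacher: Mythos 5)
Your proof is correct and is precisely the ``straightforward'' computation that the paper's one-line proof (``The proof is straightforward by definition\dots of $t(x)$'') alludes to without writing out: square the defining identity, take logarithms to get $\ln\varrho(x)\sim 2\ln|x|$, and substitute back. Since the paper gives no worked argument, there is no genuinely different route to compare; your write-out is the natural and correct one.
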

\begin{proof}
The proof is straightforward by definition  \ref{corollary-u-L-k} of $t(x)$.
\end{proof}
Now, we aim to give some estimate on $U$, trapped in $S(t)$. 
\begin{lemma}\label{lemma-estimate-U-x-t-in-S-t}
Let us consider $U(t ) \in S(K_0, \epsilon_0, \alpha_0, A, \delta_0, C_0, \eta_0, t)$ for all $t \in [0,T)$, defined as  in Definition \ref{defini-shrinking-set-S-t}  where  $ \eta_0 \ll 1$, and $K_0' \geq K_0$.   Then, the following hold:
\begin{itemize}
    \item[$(i)$] For all $|x| \leq \frac{K_0'}{4} \sqrt{(T-t) |\ln(T-t)|}$, we have 
    \begin{eqnarray*}
    \left| U(x,t ) - (T-t)^{-\frac{1}{p-1}} \varphi_0\left( \frac{|x|}{\sqrt{(T-t)|\ln(T-t)|}}\right) \right|\leq \frac{C(A)(T-t)^{-\frac{1}{p-1}}}{1 + \sqrt{|\ln(T-t)|}},
    \end{eqnarray*}
 where $\varphi_0,$ defined \eqref{defi-varphi-0}, and gradient's estimate
  \begin{eqnarray*}
    \left| \nabla_x U(x,t )  \right|\leq \frac{C( A)(T-t)^{-\frac{1}{p-1}-\frac{1}{2}}}{1 + \sqrt{|\ln(T-t)|}}.
    \end{eqnarray*}
    \item[$(ii)$] For all $|x| \in \left[ \frac{K_0'}{4} \sqrt{(T-t) |\ln(T-t)|}, \epsilon_0 \right]$, we have
    \begin{eqnarray*}
    |  U(x,t)| \leq 4 \left( \frac{(p-1)^2}{8p} \frac{|x|^2}{|\ln|x||} \right)^{-\frac{1}{p-1}},
    \end{eqnarray*}
    and
    \begin{eqnarray*}
    | \nabla_x U(x,t) | \leq 8 C_0 \left( \frac{(p-1)^2}{8p} \frac{|x|^2}{|\ln|x||} \right)^{-\frac{1}{p-1} -\frac{1}{2}} \frac{1}{|\ln|x||},
    \end{eqnarray*}
    provided that $K_0 \geq K_6$ and $ \epsilon_0 \leq \epsilon_6(K_0),$. 
    \item[$(iii)$] For all $|x| \geq \epsilon_0$, we have
    $$ \frac{1}{2} \leq  U(x,t)  \leq C(\epsilon_0,\eta_0),$$
    and
    $$ | \nabla_x U(x,t)| \leq C(\eta_0, \epsilon_0),$$
    provided that $\eta_0 \ll 1 $.
\end{itemize}
\end{lemma}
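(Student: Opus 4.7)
The plan is to verify each of the three items by translating the shrinking-set conditions from Definition \ref{defini-shrinking-set-S-t} into pointwise bounds on $U$, supplemented by Lemma \ref{lemma-properties-V-A-s} (blowup region), Lemma \ref{lemma-t(x)} (asymptotics of $\varrho(x)$), and standard parabolic smoothing (for the gradient estimates).

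\textbf{Item $(i)$.} Work in the similarity variables $s = -\ln(T-t)$, $y = x/\sqrt{T-t}$. The hypothesis $|x| \leq \tfrac{K_0'}{4}\sqrt{(T-t)|\ln(T-t)|}$ becomes $|y| \leq \tfrac{K_0'}{4}\sqrt{s}$, and since $|x| \to 0$ we have $\psi_{M_0}(y,s) \equiv 1$ on this set, so $w = W$ and $q = W - \varphi$ by \eqref{defi-q=w-varphi}. Definition \ref{defini-shrinking-set-S-t}(i) places $q \in V_A(s)$, hence by Lemma \ref{lemma-properties-V-A-s} we have $|q(y,s)| \leq C(K_0)A/\sqrt{s}$. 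From \eqref{defi-varphi} and \eqref{defi-varphi-0}, the difference $\varphi(y,s) - \varphi_0(y/\sqrt{s}) = \kappa N/(2ps) = O(1/s)$ is absorbed, so $|W(y,s) - \varphi_0(y/\sqrt{s})| \leq C(A)/\sqrt{s}$. Undoing \eqref{similarity-variable} yields the stated $L^\infty$ bound on $U$. For the gradient, I would use parabolic regularity on the equation \eqref{c4equa-Q} over a unit window in $s$ starting from Definition \ref{defini-shrinking-set-S-t}(i), then translate back to $(x,t)$ to recover the factor $(T-t)^{-1/2}$.

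\textbf{Item $(ii)$.} Set $\xi = 0$ and $\tau = \tau(x,t)$; by \eqref{c4rescaled-function-U} we have $U(x,t) = \varrho(x)^{-1/(p-1)}\mathcal{U}(x,0,\tau(x,t))$. Definition \ref{defini-shrinking-set-S-t}(ii) together with \eqref{c4defin-hat-mathcal-U-tau}, maximized as $\tau \to 1$, gives
\begin{equation*}
\mathcal{U}(x,0,\tau(x,t)) \leq \hat{\mathcal{U}}(\tau(x,t)) + \delta_0 \leq \Bigl(\tfrac{(p-1)^2 K_0^2}{64 p}\Bigr)^{-1/(p-1)} + \delta_0.
\end{equation*}
Lemma \ref{lemma-t(x)} then yields $\varrho(x)^{-1/(p-1)}\bigl(\tfrac{(p-1)^2 K_0^2}{64 p}\bigr)^{-1/(p-1)} \sim \bigl(\tfrac{(p-1)^2|x|^2}{8p|\ln|x||}\bigr)^{-1/(p-1)}$; the $\delta_0$-perturbation and the $\sim$-error are absorbed by taking $K_0$ large and $\epsilon_0$ small, producing the factor $4$. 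For the gradient, differentiating \eqref{c4rescaled-function-U} in $\xi$ at $\xi = 0$ gives $\nabla_x U(x,t) = \varrho(x)^{-1/(p-1)-1/2}\nabla_\xi \mathcal{U}(x,0,\tau(x,t))$, and the claim follows from the bound on $\nabla_\xi \mathcal{U}$ in Definition \ref{defini-shrinking-set-S-t}(ii) combined with Lemma \ref{lemma-t(x)}.

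\textbf{Item $(iii)$.} Apply Definition \ref{defini-shrinking-set-S-t}(iii) directly: $|U(x,t) - U(x,0)| \leq \eta_0$. The explicit construction \eqref{c4defini-initial-data}--\eqref{c4defini-H-epsilon-0} ensures that $U(x,0)$ is bounded above by a constant and bounded below away from zero on $\{|x| \geq \epsilon_0/4\}$, so the two-sided bound follows for $\eta_0 \ll 1$. For the gradient, $|\nabla U(x,t)| \leq |\nabla e^{t\Delta}U(x,0)| + \eta_0$, and classical smoothing estimates for the heat semigroup applied to $U(\cdot,0) \in L^\infty(\Omega)$ control $|\nabla e^{t\Delta}U(x,0)|$ uniformly on the region under consideration.

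The main obstacle is item $(ii)$: securing the clean constant $4$ in the $L^\infty$ bound requires absorbing simultaneously the $\delta_0$-perturbation, the $\sim$-error from Lemma \ref{lemma-t(x)}, and the strict inequality $\hat{\mathcal{U}}(\tau) < \hat{\mathcal{U}}(1)$, which dictates the careful hierarchy $K_0 \geq K_6$, $\epsilon_0 \leq \epsilon_6(K_0)$, $\delta_0 \ll 1$ appearing in the statement.
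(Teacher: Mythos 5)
Your proposal is correct and follows essentially the same route as the paper: translate Definition \ref{defini-shrinking-set-S-t} into pointwise bounds in each region, using Lemma \ref{lemma-properties-V-A-s} in $P_1$, the factorization $U(x,t)=\varrho(x)^{-1/(p-1)}\mathcal{U}(x,0,\tau(x,t))$ together with the monotonicity of $\hat{\mathcal{U}}$ and Lemma \ref{lemma-t(x)} in $P_2$, and the direct bounds from the shrinking set and the explicit initial data in $P_3$. The one place you depart slightly from the paper is the gradient bound in item $(i)$: the paper simply asserts it follows ``by the same technique'' (i.e.\ reading $\nabla W=\nabla\varphi+\nabla q$ from the shrinking-set controls), whereas you propose a parabolic-regularity argument over a unit window in $s$. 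Your route works just as well given the $L^\infty$ control on $q$ and the boundedness of the potential, nonlinear, remainder and $G$ terms in \eqref{c4equa-Q}; it is a self-contained alternative but not the paper's intended one-liner.
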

\begin{proof} The proof is mainly based on estimates in  Definition \ref{defini-shrinking-set-S-t}.

- The proof of item $(i)$:  Let us consider  $|x| \leq \frac{K_0'}{4} \sqrt{(T-t)|\ln(T-t)|}$, where $K_0' \geq K_0$.  Once  $ \epsilon_0 \leq \frac{1}{M_0} ,$ where $M_0$ introduced in \eqref{c4defini-psi-M-0-cut}, we have
$$ W(y,s) = w(y,s), \text{ with } y = \frac{x}{\sqrt{T-t}} \text{ and } s = -\ln(T-t).$$
Using \eqref{similarity-variable}, we derive
\begin{eqnarray*}
(T-t)^{-\frac{1}{p-1}} U(x,t) = W(y,s) = \varphi(y,s) +q(y,s).
\end{eqnarray*}
Then, from Lemma \ref{lemma-properties-V-A-s},  we get
$$ | q(y,s)| \leq \frac{C A^2}{\sqrt{s}}. $$
By definition of $\varphi$, defined in \eqref{defi-varphi}, we conclude the first estimate. In addition to that, the second one is proved by the same technique.

- The proof of item $(ii)$: We now consider $ |x| \in \left[ \frac{K_0'}{4} \sqrt{(T-t)|\ln(T-t)|}, \epsilon_0 \right] \subset P_2(t)$, recall  item $(ii)$ in  Definition \ref{defini-shrinking-set-S-t}, we have
\begin{eqnarray*}
\left|  \mathcal{U}(x,0,\tau(x,t)) - \left( (p-1) (1 - \tau(x,t)) + \frac{(p-1)^2}{4p} \frac{K^2_0}{16} \right)^{-\frac{1}{p-1}}  \right| \leq \delta_0,
\end{eqnarray*}
and
\begin{equation}\label{estimate-nable-U-x-0}
    | \nabla_\xi \mathcal{U}(x,0,\tau(x,t))| \leq  \frac{C_0}{ |\ln(\rho (x))|}.
\end{equation}
We also have the fact that 
$$ 1 -\tau(x,t) = \frac{t-t(x)}{T-t(x)} \in [0,1].   $$
Then, it follows
$$ \left( (p-1) (1 - \tau(x,t)) + \frac{(p-1)^2}{4p}) \frac{K^2_0}{16} \right)^{-\frac{1}{p-1}} \leq \left(\frac{(p-1)^2}{4 p} \frac{K_0^2}{16} \right)^{-\frac{1}{p-1}}. $$
Taking $\delta_0 \leq \left(\frac{(p-1)^2}{4 p} \frac{K_0^2}{16} \right)^{-\frac{1}{p-1}}$,  we derive
\begin{eqnarray*}
| \mathcal{U}(x,0,\tau(x,t))| \leq 2 \left(\frac{(p-1)^2}{4 p} \frac{K_0^2}{16} \right)^{-\frac{1}{p-1}}.
\end{eqnarray*}
Using the fact that, 
$$ U(x,t) = (T-t(x))^{-\frac{1}{p-1}} \mathcal{U}(x,0,\tau(x,t)),  $$
then, we have
$$ | U(x,t) | \leq 2 \left(\frac{(p-1)^2}{4 p} \frac{K_0^2}{16} \right)^{-\frac{1}{p-1}}  (T-t(x))^{-\frac{1}{p-1}}.$$
Apply Lemma \ref{lemma-t(x)} with $\epsilon_0 \leq \epsilon_6(K_0)$,  we obtain
\begin{eqnarray*}
|U(x,t) | \leq   4 \left( \frac{(p-1)^2}{8p} \frac{|x|^2}{|\ln|x||} \right)^{-\frac{1}{p-1}}.
\end{eqnarray*}
We conclude the first one, and the second one follows \eqref{estimate-nable-U-x-0} and a same technique.  In particular,  item $(iii)$ directly follows from the third  one  in Definition \ref{defini-shrinking-set-S-t}.
This completes the proof of Lemma \ref{corollary-u-L-k}. 
\end{proof}

Now, we produce the proof of Corollary \ref{corollary-u-L-k}.

\begin{proof}[Proof of Corollary  \ref{corollary-u-L-k}]
We will consider the two cases mentioned in the statement.\\
- The case  where $ \frac{k}{p-1} - \frac{N}{2} < 0$, we decompose as follows
\begin{eqnarray}
\|U\|^k_{L^k(\Omega)} &=& \int_{\Omega} U^k dx = \int_{|x| \leq \frac{K_0}{4} \sqrt{ (T-t) |\ln(T-t)|} }  U^k dx \label{decompose-U-L-k} \\
& + &       \int_{ \frac{K_0}{4} \sqrt{ (T-t) |\ln(T-t)|} \leq |x| \leq \epsilon_0 }  U^k dx +\int_{ \{|x| \geq \epsilon_0\} \cap \Omega} U^k dx.\nonumber
\end{eqnarray}
And, we also have a fundamental integral: for all $k >0$ and $K_0'$
\begin{eqnarray}
& & \int_{ \frac{K_0'}{4}\sqrt{(T-t)|\ln(T-t)|} \leq |x| \leq \epsilon_0} \left(\frac{|x|^2}{|\ln|x||} \right)^{-\frac{k}{p-1}} dx \label{integral-fundamental} \\
 &=& \left( \frac{K_0'}{4}\right)^{N -\frac{2k}{p-1}} \frac{(T-t)^{\frac{N}{2} - \frac{k}{p-1}}|\ln(T-t)|^\frac{N}{2}(1 + o_{t\to T}(1)) }{2\left( \frac{2k}{p-1} - N\right)} + f(\epsilon_0),\nonumber
\end{eqnarray}
where $f$ is some regular function. 

\medskip
\noindent
In addition to that, we also have
\begin{eqnarray}
& &(T-t)^{-\frac{k}{p-1}}\int_{|x| \leq \frac{K_0'}{4} \sqrt{(T-t)|\ln(T-t)|}} \varphi_0^k \left( \frac{|x|}{\sqrt{(T-t)|\ln(T-t)|}}\right) dx \label{integral-P-1-K-0'} \\
& = & (T-t)^{\frac{N}{2} - \frac{k}{p-1}}|\ln(T-t)|^\frac{N}{2} \int_0^\frac{K_0'}{4} \varphi_0(r) r^{N-1} dr.\nonumber
\end{eqnarray}
Then, using \eqref{lemma-estimate-U-x-t-in-S-t}, \eqref{integral-fundamental}, and \eqref{integral-P-1-K-0'},  with $ K_0' = K_0 $,    we derive
$$ \|U(t)\|_{L^k(\Omega)}^k \leq C( A, \epsilon_0, \eta_0). \forall t \in [0,T).$$
We now aim to prove that 
$$ \lim_{t \to T} \|U(t) \|_{L^k(\Omega)}^k   < +\infty.$$
It is sufficient  to  prove
$$  \left| \partial_t \|U(t) \|_{L^k(\Omega)}^k   \right| \leq  C(A, \epsilon_0, \eta_0) (T-t)^{-1+\varepsilon},  $$
with a small positive  $\varepsilon \in (0,1) $. Indeed, using \eqref{equa-U-theta-'-theta}, we derive
\begin{eqnarray*}
 \int_\Omega     U^{k-1} U_t  dx = \int_\Omega \Delta U U^{k -1} + \int_{\Omega}  U^{  p-1 + k  }    + \left( \frac{1}{p-1} \frac{\theta'(t)}{\theta(t)}  - 1\right)  \int_{\Omega} U^k.
 \end{eqnarray*}
 By  \eqref{bound-bar-theta-'}, we obtain, 
 $$ \left| \left( \frac{1}{p-1} \frac{\theta'(t)}{\theta(t)}  - 1\right)  \int_{\Omega} U^k   \right| \leq C( A, \epsilon_0, \eta_0) (T-t)^{-1+\varepsilon_1}, \text{ with } \varepsilon_1 \in (0,1).$$
 It remains to estimate for the first and second ones. Estimate for $  \int_\Omega \Delta U U^{k -1}$: Using the integration by parts, we have 
$$ \int_\Omega \Delta U U^{k -1} =  (1-k) \int_\Omega | \nabla U|^{2} U^{k -2}. $$ 
 By using  Lemma \ref{lemma-estimate-U-x-t-in-S-t}, \eqref{integral-fundamental}, \eqref{integral-P-1-K-0'}, and a similar decomposition as in \eqref{decompose-U-L-k},    we estimate
 $$ \int_\Omega | \nabla U|^{2} U^{k -2} \leq C( A, \epsilon_0, \eta_0 ) ( 1 + (T-t)^{\frac{N}{2} -\frac{k}{p-1} -1}), $$
 Similarly, we obtain
 $$  \int_{\Omega}  U^{  p-1 + k  }   \leq C( A, \epsilon_0, \eta_0 ) ( 1 + (T-t)^{\frac{N}{2} -\frac{k}{p-1} -1}). $$
 Finally, we  derive
 $$ | \partial_t \|U\|_{L^k(\Omega)}^k| \leq C( A, \epsilon_0, \eta_0 ) (T-t)^{-1 + \varepsilon}, $$
 with a small $ \varepsilon \in (0,1)$. Thus, we conclude the result for the case $ \frac{k}{p-1} -\frac{N}{2} < 0$.

 - Now, we start to the case where $ \frac{k}{p-1} - \frac{N}{2} > 0$:  Let us consider an arbitrary $K_0' \geq K_0$, and  we take \eqref{decompose-U-L-k} by replacing $K_0$ to $K_0'$. Then, we repeat the process for the case sub-critical $ \frac{k}{p-1} - \frac{N}{2} < 0$ by using Lemma \ref{lemma-estimate-U-x-t-in-S-t}, \eqref{integral-fundamental} and \eqref{integral-P-1-K-0'}, we can write
 \begin{eqnarray*}
 \int_{|x| \leq \frac{K_0'}{4} \sqrt{(T-t)|\ln(T-t)|}} U^k dx &=& (T-t)^{\frac{N}{2} - \frac{k}{p-1}} |\ln(T-t)|^{\frac{N}{2}} \int_0^{\frac{K_0'}{4}} \varphi_0(r) r^{N-1} dr \\
 & + & o_{t \to T}( (T-t)^{\frac{N}{2} - \frac{k}{p-1}} |\ln(T-t)|^{\frac{N}{2}}),
  \end{eqnarray*}
  and
\begin{eqnarray*}
\left| \int_{\frac{K_0'}{4} \sqrt{(T-t)|\ln(T-t)|} \leq |x| \leq \epsilon_0 } U^k\right| & \leq & ( K_0')^{\frac{N}{2} -\frac{k}{p-1}}.C(p). (T-t)^{\frac{k}{p-1} -\frac{N}{2}} |\ln(T-t)|^{\frac{N}{2}} \\
&+& o_{t \to T}( (T-t)^{\frac{N}{2} - \frac{k}{p-1}} |\ln(T-t)|^{\frac{N}{2}}),
\end{eqnarray*}
and
$$ \int_{\{|x| \geq \epsilon_0 \} \cap \Omega} U^k \leq C(\epsilon_0, \eta_0).$$
Let us define 
$$ v(t) = (T-t)^{\frac{N}{2} -\frac{k}{p-1}} |\ln(T-t)|^{\frac{N}{2}}.$$
We see that $K_0'$ is free, then, taking $K_0' \to +\infty$, we derive 
$$\|U\|^k_{L^k(\Omega)} =\left(  \int_0^\infty \varphi_0(r) r^{N-1} dr \right) v(t) + o_{t \to T}(v(t)). $$

- The case $ \frac{k}{p-1} - \frac{N}{2} =0$: This case arises a different situation.  Indeed, we use again 
\begin{eqnarray*}
\partial_t \|U\|^k_{L^k(\Omega)} &=&  k \int_\Omega     U^{k-1} U_t  dx  \\
&=&  k \int_\Omega \Delta U U^{k -1} + k \int_{\Omega}  U^{  p-1 + k  }    + k\left( \frac{1}{p-1} \frac{\theta'(t)}{\theta(t)}  - 1\right)  \int_{\Omega} U^k.
 \end{eqnarray*}
 We now use the result in item $(ii)$ with $k' = p-1+k$, and we derive
 \begin{eqnarray*}
 \int_{\Omega} U^{p-1+k} = \left(   \int_0^\infty \varphi_0^{p-1+k} r^{N-1}  + o_{t \to T}(1) \right) (T-t)^{-1}|\ln(T-t)|^{\frac{N}{2}}.
 \end{eqnarray*}
 In addition to that, repeating the process in the proof of item $(ii)$ by using Lemma \eqref{lemma-estimate-U-x-t-in-S-t}, \eqref{integral-fundamental} and \eqref{integral-P-1-K-0'}, we derive
 \begin{eqnarray*}
 \int_{\Omega} \Delta U U^{k-1} = (1 - k) \int_{\Omega
 } |\nabla U|^2 U^{k-2} = o_{t \to T}\left( (T-t)^{-1}|\ln(T-t)|^{\frac{N}{2}} \right),
 \end{eqnarray*}
 and use more  
 \begin{eqnarray*}
\left( \frac{1}{p-1} \frac{\theta'(t)}{\theta(t)}  - 1\right)  \int_{\Omega} U^k =o_{t \to T}\left( (T-t)^{-1}|\ln(T-t)|^{\frac{N}{2}} \right).
 \end{eqnarray*}
 Then, we derive
 $$ \partial_t \|U\|^k_{L^k(\Omega)} =  \left(   k\int_0^\infty \varphi_0^{p-1+k} r^{N-1}  + o_{t \to T}(1) \right) (T-t)^{-1}|\ln(T-t)|^{\frac{N}{2}}.  $$
 Thus, we derive
 $$ \|U\|^k_{L^k(\Omega)} =  \left(   \frac{k}{\frac{N}{2}+1}\int_0^\infty \varphi_0^{p-1+k} r^{N-1}  + o_{t \to T}(1) \right) |\ln(T-t)|^{\frac{N}{2}+1}.$$
 
\end{proof}

\section{Finite dimensional   reduction}

In the current section, we  try to reduce the infinite dimensional problem of controlling $U(t) \in S(t)$ to the finite dimensional problem of controlling the two positive spectrum modes  $q_0$ and $q_1$ in $\hat{\mathcal{V}}_A (s)$. More precisely, this reduction concludes Step 1 of  Proposition  \ref{c4proposition-existence-U}
\begin{proposition}[Reduction to a  finite  dimensional problem]\label{c4proposition-reduction-finite}  There exist positive  $T,   K_0, \epsilon_0, \alpha_0, A, \delta_0,  C_0$ and $ \eta_0$ such that  the following holds:  Assume that    $(d_0,d_1) \in \mathcal{D}$ and  the solution  $U$ of equation \eqref{equa-U-theta-'-theta}   exists on $[0, t_1],$ for some $t_1 < T,$  with initial data $U_{d_0, d_1}$ as defined  in  \eqref{c4defini-initial-data}.  Furthermore, we assume that we have  $ U \in   S (t)$ for all $ \forall t \in [0, t_1] \text{ and }  U(t_1) \in  \partial S (t_1)$ (see the  definition of $S(t) = S( K_0, \epsilon_0, \alpha_0, A, \delta_0,  C_0, \eta_0,t)$ in Definition \ref{defini-shrinking-set-S-t} and  the  set $\mathcal{D}_A$ given  in  Proposition \ref{c4proposiiton-initial-data}). Then, the following  statements  hold: 
	\begin{itemize}
	\item[$(i)$] We have  $(q_0, q_1) (s_1)  \in \partial \hat{\mathcal{V}}_A (s_1),$ where $q_0, q_1$ are the components of $q(s)$ given by \eqref{c4represent-non-perp} and $s_1=\ln(T-t_1).$
	\item[$(ii)$]   There exists  $\nu_0 > 0$ such that  for all $\nu \in (0, \nu_0),$ we have
	$$  (q_0, q_1) ( s_1  +  \nu) \notin    \hat{\mathcal{V}}_A (s_1 +  \nu).$$ 
	Consequently,  there  exists  $\nu_1 > 0$ such that 
	$$ U  \notin    S(t_1  +  \nu), \forall \nu \in (0, \nu_1).$$
	\end{itemize}
	\end{proposition}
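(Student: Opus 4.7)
The plan is to carry out the standard Merle-Zaag finite-dimensional reduction, adapted as in \cite{DZM3AS19} to absorb the extra nonlocal term $G(w,W)$ in \eqref{c4equa-Q}. The key observation is that among all defining inequalities of $S(t)$ in Definition \ref{defini-shrinking-set-S-t}, only those bounding $q_0$ and $q_1$ correspond to nonnegative eigenvalues of $\mathcal{L}$; every other bound will be strictly improved by the dynamics, so that whenever the flow hits $\partial S(t_1)$ it can only saturate a $(q_0, q_1)$-constraint.

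First I would derive sharp a priori controls on all other components. In the blowup region $P_1(t)$, I project \eqref{c4equa-Q} onto the eigenspaces of $\mathcal{L}$. Lemma \ref{lemma-properties-V-A-s} controls $B(q)=O(q^{2})$ in weighted norms; the remainder $R(y,s)$ is explicit and of order $1/s^{2}$; Lemma \ref{size-G} provides the exponential smallness of $G$. Integrating the resulting ODEs via Duhamel formulas yields $|q_{2}(s)|\le CA/s^{2}\ll A^{2}(\ln s)/s^{2}$, $\|q_{-}(\cdot,s)/(1+|y|^{3})\|_{L^{\infty}}\le CA/s^{5/2}\ll A^{2}/s^{2}$, and the analogous gradient bound for $(\nabla q)_{\perp}$ via a parabolic regularization step. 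In the outer region $\{|y|\ge K_{0}\sqrt{s}\}$, the strongly negative potential $V\to -p/(p-1)<-1$ makes $\mathcal{L}+V$ strictly dissipative, producing $\|q_{e}\|_{L^{\infty}}\le C/\sqrt{s}$, strictly smaller than $A^{2}/\sqrt{s}$ for $A$ large. In $P_{2}(t)$, I compare $\mathcal{U}$ with the ODE solution $\hat{\mathcal{U}}$ through \eqref{equa-rescaled-rigogous}: the Laplacian contribution, $\varrho(x)\mathcal{U}$, and $(\tilde{\theta}'/\tilde{\theta})\mathcal{U}$ are all $o(1)$ as $\varrho(x)\to 0$, so $|\mathcal{U}-\hat{\mathcal{U}}|$ stays strictly below $\delta_{0}$ and $|\nabla_{\xi}\mathcal{U}|$ strictly below $C_{0}/\sqrt{|\ln\varrho(x)|}$. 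In $P_{3}(t)$, continuous dependence on data combined with the smallness of $T$ from Proposition \ref{propo-bar-mu-bounded} yields strict bounds below $\eta_{0}$. Since $U(t_{1})\in\partial S(t_{1})$ forces at least one constraint to saturate, the strict improvement of every non-$(q_0,q_1)$ constraint leaves only $(q_{0},q_{1})(s_{1})\in\partial\hat{\mathcal{V}}_{A}(s_{1})$, establishing (i).

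For (ii), transversality comes from the positive eigenvalues $\lambda_{0}=1$, $\lambda_{1}=1/2$. The projected ODEs read
\begin{equation*}
\dot{q}_{0}=q_{0}+O(s^{-2}),\qquad \dot{q}_{1,j}=\frac{1}{2}q_{1,j}+O(s^{-2}),
\end{equation*}
with errors absorbing the projections of $B(q)$, $R$ and $G$, which are strictly dominated by the linear term on the boundary $|q_{i}|=A/s^{2}$ once $A$ and $s_{0}$ are taken large enough. If, say, $q_{0}(s_{1})=\epsilon A/s_{1}^{2}$ with $\epsilon\in\{+1,-1\}$, then
\begin{equation*}
\frac{d}{ds}\left(q_{0}-\epsilon\frac{A}{s^{2}}\right)\bigg|_{s=s_{1}}=\epsilon\frac{A}{s_{1}^{2}}+2\epsilon\frac{A}{s_{1}^{3}}+o(s_{1}^{-2})
\end{equation*}
carries the sign of $\epsilon$, so $q_{0}$ exits $[-A/s^{2},A/s^{2}]$ at $s=s_{1}+\nu$ for small $\nu>0$; the same argument applies componentwise to $q_{1}$. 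Hence $(q_{0},q_{1})(s_{1}+\nu)\notin\hat{\mathcal{V}}_{A}(s_{1}+\nu)$, and a fortiori $U\notin S(t_{1}+\nu)$ for small $\nu$, since $(q_{0},q_{1})$ belong to the list of quantities constrained by $S(t)$.

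The main technical obstacle, beyond the careful bookkeeping across the three regions $P_{1},P_{2},P_{3}$ and in particular the interface between similarity variables and localized variables, is to confirm that the nonlocal contribution $G$, absent in the classical case \cite{MZnon97}, produces only exponentially small corrections on every projection onto the low modes and therefore cannot spoil either the strict improvement of the bounds or the transversality step. This is exactly what Lemma \ref{size-G} combined with the bound \eqref{bound-bar-theta-'} on $\theta'$ is designed to provide.
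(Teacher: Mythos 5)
Your proposal follows the same Merle--Zaag reduction scheme as the paper, which relies exactly on the a priori improvements in $P_1$, $P_2$, $P_3$ (the paper's Propositions \ref{c4propo-priori-P-1-later}, \ref{c4propo-a-priori-P-2}, \ref{c4propo-priori-P-3}) plus the exponential smallness of $G$ from Lemma \ref{size-G}, followed by the transversality argument for the unstable modes $q_0,q_1$. Minor imprecisions in your intermediate bounds (e.g.\ the improved $q_2$ bound actually carries the $\ln s$ factor and is $A^2\ln s/(2s^2)$, and $q_-$ is improved to $A/(2s^2)$ rather than $O(s^{-5/2})$, while the error in the transversality ODE is $O(s^{-2})$ with a constant independent of $A$ rather than $o(s^{-2})$) do not affect the conclusion once $A$ is taken large.
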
    
\begin{proof}
The proof is quite the same as \cite[Proposition 3.4]{DZM3AS19} that mainly   relies  on  \textit{a priori  estimates } in regions $P_1, P_2$ and $P_3$  of $S(t)$ introduced in Definition \ref{defini-shrinking-set-S-t}. For that reason, we ignore the detailed arguments and we only provide below the required a priori estimates to conclude the proof.
\end{proof}

\subsection{A priori estimates on $P_1$} 

We aim to prove the following Lemma:\begin{lemma}\label{c4propo-estima-in-P-1}
There exists   $K_4 > 0, A_4 > 0$ such that    for all $K_0 \geq K_4, A  \geq A_4$ and  $l^* > 0$  there exists   $T_4 (K_0,  A,  l^* )$   such that for all positive $ \epsilon_0, \alpha_0,  \delta_0, \eta_0,  C_0$ and for $T \leq T_4, l  \in [0, l^*]$,   the following holds: \\ Assume that  we   have the following conditions: 
\begin{itemize}
\item[-]  We consider initial data  $ U_{d_0,d_1} $, defined as in \eqref{c4defini-initial-data} and  $(d_0, d_1) \in \mathcal{D}_{A},$ given in Proposition  \ref{c4proposiiton-initial-data}   such that    $(q_0,q_1)(s_0)$ belongs  to  $\hat{\mathcal{V}}_A(s_0)$ (defined  in  \eqref{c4defini-hat-mathcal-V-A}), where  $s_0 = -  \ln T,$  $\hat{\mathcal{V}}_A (s)$ is  and $q_0, q_1$ are components  of  $q_{d_0, d_1} (s_0)$.  
 \item[-] We have  $U (t) \in S(T,K_0, \epsilon_0, \alpha_0, A, \delta_0, C_0,\eta_0, t)$  for all $t  \in [T-e^{-\sigma}, T - e^{-(\sigma + l)}]$, for some  $\sigma \geq s_0$ and $l \in [0, l^*]$.
 \end{itemize}
Then, the following estimates hold:
\begin{itemize}
\item[$(i)$] For all  $s  \in  [ \sigma,  \sigma + l	]$, we have
\begin{eqnarray}
\left|q_0'(s)  -  q_0 (s)\right|  + \left| q'_{1,i} (s)  - \frac{1}{2} q_{1,i} (s) \right| \leq  \frac{C}{s^2}, \forall i \in  \{ 1,...,N \} \label{c4esti-ODE-0-1},
\end{eqnarray}
and
\begin{equation}\label{c4esti-ODE-2}
\left|  q'_{2,i,j}  (s)   + \frac{2}{s} q_{2,i,j} (s)    \right| \leq \frac{C A}{s^3}, \forall i,j \in \{1,...,N\},
\end{equation}
where  $q_1  = (q_{1,j})_{1 \leq i  \leq N }, q_{2}  = (q_{2,i,j})_{1 \leq i,j \leq N}$ and  $q_1,q_2$ are  defined  in \eqref{c4defini-R-i}. 
\item[$(ii)$] Control  of  $q_-(s) $:  For all $s \in [\sigma, \sigma + l] $ and $ y \in \mathbb{R}^N$, we have  the following  two cases:
\begin{itemize}
\item[-] When  $\sigma > s_0$ then:
\begin{eqnarray}
\left|  q_-(y,s)   \right| \leq  C \left(  A e^{-\frac{s -\sigma}{2}}  + A^2 e^{- (s - \sigma)^2}  + (s -\sigma)   \right) \frac{(1 +  |y|^3)}{ s^2}, \label{c4estima-Q--case-sigma-geq-s-0}
\end{eqnarray}
\item[-]  When $\sigma = s_0$ then:
\begin{equation}\label{c4estima-Q--case-sigma=s-0}
\left| q_-(y,s)  \right| \leq  C (1  +  (s -\sigma) ) \frac{(1 + |y|^3)}{s^2}.
\end{equation}
\end{itemize}
\item[$(iii)$] Control  of the  gradient   term of $q$: For all  $s \in [\sigma, \sigma + l] , y \in \mathbb{R}^N$, we have  the two   following  cases:
\begin{itemize}
\item[-]  When  $\sigma > s_0$ then: 
\begin{equation}\label{c4estima-nabla-Q--case-sigma-geq-s-0}
    \left|  (\nabla q)_\perp (y,s)    \right| \leq  C \left(  A e^{-\frac{s -\sigma}{2}}  +  e^{- (s - \sigma)^2}  + (s -\sigma)  + \sqrt{s - \sigma}  \right) \frac{(1 +  |y|^3)}{ s^2}, 
\end{equation}
\item[-]   When $\sigma = s_0$ then:
\begin{equation}\label{c4estima--nabla-Q--case-sigma=s-0}
\left| (\nabla q)_\perp (y,s)  \right| \leq  C \left(1  +  (s -\sigma) + \sqrt{s -\sigma} \right) \frac{(1 + |y|^3)}{s^2}.
\end{equation}
\end{itemize}
\item[$(iii)$]  Control   of the  outside part $q_e$: For all $s \in [\sigma, \sigma + \lambda] ,$ we have  the two   following cases:
\begin{itemize}
\item[-]   When $\sigma >s_0$ then:
\begin{eqnarray}
\left\|  q_e(.,s)   \right\|_{L^\infty(\mathbb{R}^N)}  \leq  C \left(  A^2 e^{-\frac{s -\sigma}{2}}  + A e^{ (s - \sigma)}  + 1 +  (s -\sigma)   \right) \frac{ 1}{ \sqrt{s}}, \label{c4estima-Q-e-case-sigma-geq-s-0}
\end{eqnarray}
\item[-] When $\sigma = s_0$ then:
\begin{equation}\label{c4estima-Q-e-case-sigma=s-0}
\left\| q_e(.,s)  \right\|_{L^\infty(\mathbb{R}^N)} \leq  C \left(1  +  (s -\sigma) \right)\frac{1}{\sqrt s}.
\end{equation}
\end{itemize}
\end{itemize}
\end{lemma}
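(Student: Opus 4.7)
The plan is to start from the evolution equation \eqref{c4equa-Q} for $q=w-\varphi$ and analyse it mode by mode under the spectral decomposition of $\mathcal{L}$ described in \eqref{c4represent-non-perp}. Each spectral component satisfies a Duhamel identity driven by the potential $V$, the quadratic error $B(q)$, the consistency error $R(y,s)$, and the non-local/cut-off term $G(w,W)$. A crucial input is that, by the announced Lemma \ref{size-G}, the term $G$ is exponentially small in $s$, so it contributes only lower-order corrections to the $s^{-i}$ target dynamics. With this in hand, the computation parallels the classical analysis of \cite{BKnon94,MZdm97,MZnon97}, and more directly \cite[Lemma 4.2]{DZM3AS19} in the non-local setting; the estimates are then obtained by standard but careful bookkeeping.

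\textbf{Positive and neutral modes.} For $q_0$ and the components $q_{1,i}$, project equation \eqref{c4equa-Q} against $h_0$ and $h_{1,i}$ in $L^2_\rho(\mathbb{R}^N)$. Since $\mathcal{L}h_0=h_0$ and $\mathcal{L}h_{1,i}=\tfrac{1}{2}h_{1,i}$, the linear part produces exactly the left-hand sides of \eqref{c4esti-ODE-0-1}. The contribution of $Vq$, of $B(q)$ and of $R$ is then bounded by $C/s^2$ using the pointwise estimates of Lemma \ref{lemma-properties-V-A-s} together with the Taylor expansion of $\varphi$, which gives $R=O(s^{-2})$ in $L^2_\rho$. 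For the second-order modes $q_{2,i,j}$, the crucial step is to use the expansion $\varphi^{p-1}=\tfrac{1}{p-1}-\tfrac{1}{2ps}|y|^2+O(s^{-2})$: projected on $\mathcal{E}_2$, the potential $V$ acts as a multiplication by $-2/s$, which is precisely the coefficient appearing in \eqref{c4esti-ODE-2}; the remaining terms add $O(A/s^3)$.

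\textbf{Negative, gradient and outer parts.} For these three components we use the integral representation
\begin{equation*}
q(s)=K_V(s,\sigma)q(\sigma)+\int_\sigma^s K_V(s,\tau)\bigl[B(q)(\tau)+R(\tau)+G(\tau)\bigr]\,d\tau,
\end{equation*}
where $K_V$ is the kernel of $\mathcal{L}+V$. Projecting on the negative subspace yields the standard spectral-gap factor $e^{-(s-\sigma)/2}$ acting on $q_-(\sigma)$; for $(\nabla q)_\perp$ the extra $\sqrt{s-\sigma}$ comes from gradient estimates on the heat-type kernel. Integration of $B$, $R$ and $G$ against $K_V$ produces the $(s-\sigma)$ term in front of $(1+|y|^3)/s^2$, because $B(q)$ is quadratic and the other two are a priori $O(s^{-2})$. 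When $\sigma=s_0$ the ``$A$'' factor in front of the initial-data contribution is replaced by $1$ thanks to the refined bounds of Proposition \ref{c4proposiiton-initial-data}, which explains the dichotomy between \eqref{c4estima-Q--case-sigma-geq-s-0}--\eqref{c4estima-Q--case-sigma=s-0} and their gradient counterparts. For $q_e$, property $(ii)$ of $V$ stated above \eqref{c4R=R-b+R-e} shows that in $\{|y|\geq K_0\sqrt{s}\}$ the effective spectrum of $\mathcal{L}+V$ is bounded above by $-1/(p-1)+\epsilon<0$, producing the contraction $e^{-(s-\sigma)/(p-1)}$; combining with the driving terms yields \eqref{c4estima-Q-e-case-sigma-geq-s-0}--\eqref{c4estima-Q-e-case-sigma=s-0}.

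\textbf{Main obstacle.} The principal technical novelty, with respect to \cite{BKnon94,MZdm97,MZnon97}, is the presence of the non-local/cut-off term $G(w,W)$. Controlling it requires the bound $|\bar\theta'(s)/\bar\theta(s)|\lesssim e^{-\varepsilon s}$, which itself rests on Proposition \ref{propo-bar-mu-bounded} and on the delicate coupling between $\theta$ and the full profile of $U$. The core difficulty is to verify that this exponential smallness survives each of the projections (positive, neutral, negative, gradient, outer), so that $G$ never pollutes the $s^{-i}$ scale of any of the bounds above. Once this is secured, the remainder of the proof is a careful but standard bookkeeping of semigroup estimates that we omit, referring to \cite[Lemma 4.2]{DZM3AS19}.
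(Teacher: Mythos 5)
Your proposal follows the same approach as the paper: the paper's proof of this lemma simply reduces it to \cite[Lemma 3.2]{MZnon97} (which treats equation (24) there, here with $a=0$) augmented by the exponentially small perturbation $G(w,W)$ established in Lemma \ref{size-G}, and you have correctly reconstructed the underlying modal projection plus Duhamel-kernel argument, the spectral-gap contraction on $q_-$, $(\nabla q)_\perp$ and $q_e$, and the crucial role of the exponential decay of $G$. (One small slip: in the Taylor expansion $\varphi^{p-1}=\frac{1}{p-1}-\frac{|y|^2}{4ps}+\frac{N}{2ps}+O(s^{-2})$ the coefficient of $|y|^2$ is $-\frac{1}{4ps}$ rather than $-\frac{1}{2ps}$, but this does not change $V\sim -(|y|^2-2N)/(4s)$ nor the resulting approximate ODE $q_{2}'\approx -\tfrac{2}{s}\,q_{2}$.)
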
 
\begin{proof}
Let us remark that the proof is similar to \cite[Lemma 3.2 ]{MZnon97} (see also \cite{MZdm97}). Indeed,   our situation corresponds to  equation (24) in   \cite[Lemma 3.2 ]{MZnon97} with $a =0$ and  with  a small  perturbation $G(w,W).$ In particular,    the structure of our shrinking set $S(t)$ in exactly the same as \cite[Lemma 3.2 (i)]{MZnon97}. For those reasons, we kindly  refer the readers  to see  the technical estimates in this work. 
\end{proof}
The preceding Lemma  directly implies the following:
\begin{proposition}[A priori estimates in $P_1 (t)$]\label{c4propo-priori-P-1-later}
There exist $K_5, A_5 \geq 1$ such that for all $K_0 \geq K_5, A \geq A_5, \epsilon_0 > 0, \alpha_0 > 0, \delta_0 \leq \frac{1}{2} \hat{\mathcal{U}}(0), C_0 > 0 , \eta_0 >0 $, there exists $T_5 (K_0, \epsilon_0,\alpha_0,A,\delta_0, C_0,\eta_0)$ such that  for all $T \leq T_5$, the following holds: If $U$ is a non negative  solution  of equation \eqref{equa-U-theta-'-theta} satisfying  $U (t) \in S(T,K_0,\epsilon_0, \alpha_0,A,\delta_0, C_0,\eta_0, t)$ for all $t \in [0,t_5]$ for some $t_5 \in [0,T)$, and  with initial data $ U_{d_0,d_1}$ given in \eqref{c4defini-initial-data} for some $d_0,d_1 \in \mathcal{D}_A$ (cf. Proposition \ref{c4proposiiton-initial-data}), then, for all $s \in [ -\ln T,- \ln (T-t_5) ]$, we have the following:
\begin{eqnarray*}
 \forall i,j \in \{1, \cdots, n\}, \quad |q_{2,i,j}(s)| & \leq & \frac{A^2 \ln s}{2 s^2}, \label{c4conq_1-2} \\
\left\| \frac{q_{,-}(.,s)}{1 + |y|^3}\right\|_{L^\infty(\mathbb{R}^N)} &\leq &  \frac{A}{2 s^{2}}, \left\|  \frac{(\nabla q(.,s))_{\perp}}{ 1 + |y|^3}  \right\|_{L^\infty(\mathbb{R}^N)} \leq  \frac{A}{2 s^2} \\
 \|q_{e}(s)\|_{L^\infty(\mathbb{R}^N)} & \leq &  \frac{A^2}{2 \sqrt s}. \label{c4conq-q-1--and-e}
\end{eqnarray*}
\end{proposition}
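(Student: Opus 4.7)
The plan is to upgrade the dynamic estimates of Lemma \ref{c4propo-estima-in-P-1} into the strict improvement of the shrinking-set bounds. This improvement is the standard mechanism by which the non-positive spectral modes are shown to be contracting, leaving $(q_0,q_1)$ as the only possibly saturating components. The argument splits the interval $[s_0, s_5]$, where $s_0 = -\ln T$ and $s_5 = -\ln(T - t_5)$, into two regimes according to whether the elapsed time $s - s_0$ is smaller or larger than a threshold $\bar l > 0$ to be fixed (playing the role of $l^*$ in Lemma \ref{c4propo-estima-in-P-1}).

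In the near-initial regime $s \in [s_0, s_0 + \bar l]$, I would apply Lemma \ref{c4propo-estima-in-P-1} with $\sigma = s_0$, namely estimates \eqref{c4estima-Q--case-sigma=s-0}, \eqref{c4estima--nabla-Q--case-sigma=s-0}, \eqref{c4estima-Q-e-case-sigma=s-0}, combined with the sharper initial bounds provided by Proposition \ref{c4proposiiton-initial-data}. These give, for instance,
$$|q_-(y,s)|\leq C(1+\bar l)\frac{1+|y|^3}{s^2},\qquad \|q_e(s)\|_{L^\infty(\R^N)}\leq \frac{C(1+\bar l)}{\sqrt s},$$
which are bounded by $\frac{A}{2s^2}(1+|y|^3)$ and $\frac{A^2}{2\sqrt s}$ respectively as soon as $A$ is chosen large in terms of $\bar l$. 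The estimate on $(\nabla q)_\perp$ follows identically from \eqref{c4estima--nabla-Q--case-sigma=s-0}.

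In the far-initial regime $s \in [s_0 + \bar l, s_5]$, I set $\sigma = s - \bar l > s_0$. Since by hypothesis $U(t') \in S(t')$ on the whole interval, all the bounds of Definition \ref{defini-shrinking-set-S-t} hold at the reinitialization time $\sigma$, so the input conditions for the second case ($\sigma > s_0$) of Lemma \ref{c4propo-estima-in-P-1} are satisfied. Applying \eqref{c4estima-Q--case-sigma-geq-s-0} gives
$$|q_-(y,s)|\leq C\bigl(A e^{-\bar l/2}+A^2 e^{-\bar l^2}+\bar l\bigr)\frac{1+|y|^3}{s^2}.$$
The plan is to choose $\bar l$ first large enough that the exponentially small coefficient $C(e^{-\bar l/2}+A e^{-\bar l^2})$ is bounded by $1/4$, and then $A$ large enough that $C\bar l \leq A/4$, yielding $|q_-|\leq \frac{A}{2s^2}(1+|y|^3)$. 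Identical arithmetic, using \eqref{c4estima-nabla-Q--case-sigma-geq-s-0} and \eqref{c4estima-Q-e-case-sigma-geq-s-0}, delivers the corresponding improved bounds for $(\nabla q)_\perp$ and $q_e$, the extra factor $A^2$ in the shrinking set for $q_e$ providing enough room to absorb the $A$-linear term appearing in \eqref{c4estima-Q-e-case-sigma-geq-s-0}.

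The bound on the matrix mode $q_{2,i,j}$ is handled separately by integration of the ODE \eqref{c4esti-ODE-2}. Writing it as $(s^2 q_{2,i,j})'= s^2\varepsilon(s)$ with $|\varepsilon(s)|\leq CA/s^3$ and integrating from $s_0$ gives
$$s^2|q_{2,i,j}(s)|\leq s_0^2 |q_{2,i,j}(s_0)|+CA\ln(s/s_0).$$
Combining this with the initial bound $|q_{2,i,j}(s_0)|\leq \ln s_0 / s_0^2$ from Proposition \ref{c4proposiiton-initial-data}, and using $\ln s_0 + CA\ln(s/s_0)\leq \frac{A^2\ln s}{2}$ for $A$ sufficiently large and $T$ sufficiently small (so that $s_0$ is large), yields $|q_{2,i,j}(s)|\leq \frac{A^2\ln s}{2s^2}$. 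The main technical obstacle is the correct ordering of the parameter choices: one must first fix $\bar l$ in terms of $K_0$ (which governs the constants in Lemma \ref{c4propo-estima-in-P-1}), then take $A$ large in terms of $\bar l$, then $\epsilon_0, \delta_0, \alpha_0, C_0, \eta_0$ compatible with the assumptions of Lemma \ref{c4propo-estima-in-P-1}, and finally $T$ small enough that $s_0 = -\ln T$ dominates all finite constants appearing in the comparisons — precisely the hierarchy encoded in the statement of the proposition.
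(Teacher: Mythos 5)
Your proposal implements the standard Merle--Zaag time-splitting argument, which is indeed what the paper invokes implicitly (the paper dispenses with the proof via the phrase ``directly implies''). The two regimes (near-initial handled by the $\sigma=s_0$ case of Lemma~\ref{c4propo-estima-in-P-1}, far regime handled by $\sigma = s - \bar l$), the ODE integration for $q_{2,i,j}$, and the role of $A$ in absorbing the polynomial-in-$\bar l$ terms are all correct; your treatment of $q_{2,i,j}$ and $q_e$ is sound.

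There is, however, an internal inconsistency in your parameter ordering for $q_-$ that you should resolve. In the far regime you write the coefficient bound as $C(e^{-\bar l/2} + Ae^{-\bar l^2}) \leq 1/4$ -- note the $A$ sitting inside -- and then in your closing paragraph you claim one should ``first fix $\bar l$ in terms of $K_0$, then take $A$ large in terms of $\bar l$.'' These two statements clash: with the stated target $\frac{A}{2s^2}$, controlling the term $CA^2 e^{-\bar l^2}$ from \eqref{c4estima-Q--case-sigma-geq-s-0} by $A/6$ requires $CAe^{-\bar l^2}\le 1/6$, which forces $\bar l\gtrsim \sqrt{\ln A}$ and therefore cannot be achieved with $\bar l$ chosen before $A$. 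The consistent ordering is to let $\bar l$ grow mildly with $A$ (say $\bar l\sim\sqrt{\ln A}$) and then verify that $C\bar l\le A/6$ is automatic. This also interacts with the $q_e$ bound: the $Ae^{s-\sigma}$ term in \eqref{c4estima-Q-e-case-sigma-geq-s-0} caps $\bar l$ from above by roughly $\ln A$, and one should observe that the window $\sqrt{\ln A}\lesssim\bar l\lesssim\ln A$ is nonempty for $A$ large. (Alternatively, note that the paper's statement $\frac{A}{2s^2}$ for $q_-$ does not match the shrinking-set bound $\frac{A^2}{s^2}(1+|y|^3)$ of Definition~\ref{defini-shrinking-set-S-t}; if the proposition's bound is meant to read $\frac{A^2}{2s^2}$, then ``$\bar l$ first, then $A$'' is indeed valid for $q_-$ as well. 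Either way, you should name and resolve this tension rather than assert the simple ordering.)
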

\subsection{A priori estimates in $P_2$:}
In order to derive the required a priori estimates within the region $P_2$ we first need the auxiliary result:
\begin{lemma}[A priori estimates in the intermediate region]\label{c4lemma-max-t-x-0}
  There exist $K_ 6, A_6 > 0, $ such that  for all $K_0 \geq K_6, A \geq A_6,  \delta_6 >0$,  there exists      $\alpha_6 (K_0, \delta_6), C_6 (K_0, A) >  0$ such that for all $\alpha_0 \leq  \alpha_6, C_0 > 0 $,   there exists  $\epsilon_6 (\alpha_0, A, \delta_6, C_0)>0$ such that  for all $\epsilon_0 \leq  \epsilon_6 $, there exists $T_6(\epsilon_0, A, \delta_6,C_0)$ and $\eta_6 (\epsilon_0, A, \delta_0, C_0) > 0$ such that for all $T \leq T_6 , \eta_ 0 \leq  \eta_6,   \delta_0 \leq  \frac{1}{2} \left( p-1 + \frac{(p-1)^2}{4p} \frac{K_0^2}{16} \right)^{-\frac{1}{p-1}}$,   the following  holds: if   $U \in S(T, K_0, \epsilon_0, \alpha_0, A, \delta_0, C_0, \eta_0, t)$ for all $t \in [0, t_*],$ for some $t_* \in [0,T)$, then  for all $|x| \in \left[ \frac{K_0}{4} \sqrt{(T -t_*) |\ln (T-t_*)|}, \epsilon_0\right]$, we have:
\begin{itemize}
\item[$(i)$] For all $|\xi| \leq \frac{7}{4}   \alpha_0 \sqrt{|\ln \varrho(x)|}     $ and $\tau \in \left[  \max \left( 0, - \frac{t(x)}{\varrho(x)}\right), \frac{t_*- t(x)}{ \varrho(x)} \right]  $,  the  transformed function   $\mathcal{U} (x,\xi, \tau)$ defined in \eqref{c4rescaled-function-U}   satisfies  the following: 
\begin{eqnarray}
 \left|  \nabla_\xi  \mathcal{U} (x, \xi, \tau)  \right|   & \leq & \frac{2  C_0}{ \sqrt{|\ln \varrho(x)|}}, \label{c4estima-nabka-mathcal-U-lema}\\
 \mathcal{U}(x, \xi, \tau)  & \geq &   \frac{1}{4} \left( p-1   + \frac{(p-1)^2}{4 p} \frac{K_0^2}{16} \right)^{-\frac{1}{p-1}}, \label{c4estima-mathcal-U-leq-K-0-2}\\
   \left|   \mathcal{U} (x, \xi ,  \tau) \right|  & \leq  &  4. \label{c4mathcal-U-bound-lema}
\end{eqnarray}
\item[$(ii)$] For all $|\xi| \leq  2 \alpha_0 \sqrt{|\ln \varrho(x)|}$ and $\tau_0 =  \max\left( 0, - \frac{t(x)}{\varrho(x)}\right)$: we have
$$ \left|  \mathcal{U}( x, \xi, \tau_0)  - \hat{\mathcal{U}} (\tau_0)    \right| \leq \delta_6 \text{ and  }    \left| \nabla_\xi \mathcal{U}(x, \xi, \tau_0)   \right| \leq \frac{C_6 }{  \sqrt{|\ln \varrho(x)|}}. $$ 
\end{itemize}
\end{lemma}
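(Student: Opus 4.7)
The plan is to establish part (ii) first, since it furnishes the initial data for the local Cauchy problem in part (i), and then propagate those estimates forward in $\tau$ using the parabolic structure of \eqref{equa-rescaled-rigogous}.

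For part (ii), I would split the analysis according to the sign of $t(x)$. When $t(x) > 0$ we have $\tau_0 = 0$, and $\mathcal{U}(x,\xi,0)$ is the rescaling of the physical solution at time $t(x)$, at which $U(\cdot, t(x))\in S(t(x))$ and in particular the $P_1$ bounds apply. Writing $\mathcal{U}(x,\xi,0) = W(y,s)$ with $y = x/\sqrt{\varrho(x)}+\xi$ and $s = -\ln\varrho(x)$, the defining relation \eqref{c4defini-t(x)-} gives $|x/\sqrt{\varrho(x)}| = (K_0/4)\sqrt{s}$, so that for $|\xi|\leq 2\alpha_0\sqrt{s}$ with $\alpha_0$ sufficiently small the point $y$ remains in the blowup region $\{|y|\leq K_0\sqrt{s}\}$. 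Lemma \ref{lemma-properties-V-A-s} then yields $|W(y,s) - \varphi(y,s)|\leq CA^2/\sqrt{s}$, while a Taylor expansion of $\varphi$ around $|y|^2/s = K_0^2/16$ (precisely the value defining $\hat{\mathcal{U}}(0)$) produces $\varphi(y,s) = \hat{\mathcal{U}}(0) + O(\alpha_0) + O(1/s)$, the $O(\alpha_0)$ term coming from the cross contribution $(x/\sqrt{\varrho(x)})\cdot\xi/s = O(K_0\alpha_0)$. Choosing $\alpha_0$ small and then $\epsilon_0$ small (so that $s$ is large) yields the $\delta_6$-closeness. The gradient bound $|\nabla_\xi\mathcal{U}(x,\xi,0)| = |\nabla_y W(y,s)|\leq C/\sqrt{s}$ follows from the explicit formula for $\nabla_y\varphi$ (which scales like $y/s = O(1/\sqrt{s})$) together with the $V_A$ bounds on $\nabla q$. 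The case $t(x)\leq 0$, where $\tau_0 = -t(x)/\varrho(x) > 0$, is handled analogously by starting from the initial time $t = 0$ and invoking Proposition \ref{c4proposiiton-initial-data}, whose $H^*$ piece encodes the same asymptotic profile.

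For part (i), I would run a continuity argument. The bounds \eqref{c4estima-nabka-mathcal-U-lema}--\eqref{c4mathcal-U-bound-lema} extend the $S(t)$ control to the slightly larger range $|\xi|\leq (7/4)\alpha_0\sqrt{|\ln\varrho(x)|}$ at the cost of doubling the gradient constant, and at $\tau = \tau_0$ they already follow from the strictly tighter estimates of (ii). Define $\tau^\dagger$ as the supremum of times for which these bounds hold on $[\tau_0,\tau^\dagger]$. Assuming for contradiction that $\tau^\dagger < (t_*-t(x))/\varrho(x)$, compare $\mathcal{U}$ to the ODE solution $\hat{\mathcal{U}}(\tau)$: the difference $D = \mathcal{U} - \hat{\mathcal{U}}$ satisfies
\[
\partial_\tau D = \Delta_\xi D + p\hat{\mathcal{U}}^{p-1}D + O(D^2) + \Big(\frac{1}{p-1}\frac{\tilde\theta'_\tau(\tau)}{\tilde\theta(\tau)} - \varrho(x)\Big)\mathcal{U},
\]
where $\hat{\mathcal{U}}(\tau)$ stays uniformly bounded on $[\tau_0,1)$ because $(p-1)(1-\tau)+(p-1)^2K_0^2/(64p)$ is bounded below by a strictly positive constant. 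A Duhamel representation combined with a Gronwall estimate then yields $\|D(\tau)\|_{L^\infty}\leq C\delta_6 + o(1)$, and tuning the parameters closes the bootstrap for $|\mathcal{U}|\leq 4$ and $\mathcal{U}\geq (1/4)\hat{\mathcal{U}}(0)$. The gradient bound propagates by interior parabolic regularity applied to \eqref{equa-rescaled-rigogous}, the nonlinearity $\mathcal{U}^p$ and the linear potential being uniformly bounded on the relevant region.

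The main obstacle will be transferring information from the region $|\xi|\leq \alpha_0\sqrt{|\ln\varrho(x)|}$ on which the $S(t)$ hypothesis gives direct control, out to the enlarged region $|\xi|\leq (7/4)\alpha_0\sqrt{|\ln\varrho(x)|}$ required in (i); this step forces the use of sharp interior parabolic estimates and a careful comparison with the spatially homogeneous ODE. A secondary difficulty is the nonlocal coefficient $\tilde\theta'_\tau/\tilde\theta$, which by Proposition \ref{propo-bar-mu-bounded} is dominated by $(T-t)^{-1+\varepsilon}$ and, after rescaling by $\varrho(x) = T - t(x)$, contributes a factor $\varrho(x)^\varepsilon$ that is integrable in $\tau$ over $[\tau_0,1)$ and becomes negligible as $\epsilon_0\to 0$; this is what allows the comparison with the purely ODE-driven profile $\hat{\mathcal{U}}$ to close.
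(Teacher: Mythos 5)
The paper's own proof of this lemma is a one-line citation of \cite[Lemma 4.2]{DZM3AS19} and \cite[Lemma 2.6]{MZnon97}, so there is no internal argument to compare against; I evaluate your proposal on its merits. Your treatment of part $(ii)$ is sound: writing $\mathcal{U}(x,\xi,0)=W(y,s)$ with $y=x/\sqrt{\varrho(x)}+\xi$, $s=-\ln\varrho(x)$, using the $P_1(t(x))$ control from Lemma \ref{lemma-properties-V-A-s}, and Taylor-expanding $\varphi$ around $|y|^2/s=K_0^2/16$ is exactly the standard mechanism, and the bookkeeping of the $O(K_0\alpha_0)$ cross term and the $O(1/\sqrt s)$ remainder is correct.

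For part $(i)$, however, the time-bootstrap from $\tau_0$ cannot by itself produce the stated constants, and this is a genuine gap. The gradient bound \eqref{c4estima-nabka-mathcal-U-lema} requires $|\nabla_\xi\mathcal{U}|\leq 2C_0/\sqrt{|\ln\varrho(x)|}$ on the enlarged range, where the crucial point is that the constant is $2C_0$ --- a factor-of-two loss relative to the $C_0$ in the $S(t)$ hypothesis, uniformly in $C_0$. Propagating from the $\tau=\tau_0$ data of part $(ii)$ (which carries the bound $C_6(K_0,A)/\sqrt{|\ln\varrho(x)|}$) via Duhamel with the potential $p\mathcal{U}^{p-1}$ yields at best $C(K_0,A)e^{C(K_0)(\tau-\tau_0)}/\sqrt{|\ln\varrho(x)|}$; this constant bears no relation to $2C_0$ and the estimate fails for small $C_0$. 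The argument that actually closes is a \emph{re-centering}: for a point $(x,\xi,\tau)$ with $|\xi|\leq\frac{7}{4}\alpha_0\sqrt{|\ln\varrho(x)|}$, set $\mathbf{x}=x+\xi\sqrt{\varrho(x)}$ and $\mathbf{t}=\varrho(x)\tau+t(x)$; since $|\xi\sqrt{\varrho(x)}|/|x|\leq 7\alpha_0/K_0$, the point $\mathbf{x}$ remains in $P_2(\mathbf{t})$ with $\varrho(\mathbf{x})/\varrho(x)=1+O(\alpha_0/K_0)$, and the $S(\mathbf{t})$ hypothesis gives $|\nabla_{\tilde\xi}\mathcal{U}(\mathbf{x},0,\tau(\mathbf{x},\mathbf{t}))|\leq C_0/\sqrt{|\ln\varrho(\mathbf{x})|}$, which transports back to the original frame with a multiplicative factor $(\varrho(x)/\varrho(\mathbf{x}))^{\frac{1}{p-1}+\frac12}\cdot\sqrt{|\ln\varrho(x)|/|\ln\varrho(\mathbf{x})|}\to 1$ as $\alpha_0/K_0\to0$, hence a factor $\leq 2$. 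The same re-centering is what furnishes the $L^\infty$ bounds $\frac14\hat{\mathcal{U}}(0)\leq\mathcal{U}\leq 4$, since it transfers the $\delta_0$-closeness to $\hat{\mathcal{U}}$ from the $S(\mathbf{t})$ hypothesis at \emph{all} intermediate times, whereas your Duhamel--Gronwall comparison inherits only the $\delta_6$-closeness from $\tau_0$, and $\delta_6$ is an arbitrary parameter in the lemma's statement (unrelated to $\delta_0$), so the resulting bound need not be small. Your observation about the integrability of the nonlocal term after rescaling ($\varrho(x)^\varepsilon(1-\tau)^{-1+\varepsilon}$) is correct, but it sits inside a bootstrap that does not close without the re-centering ingredient.
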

\begin{proof}
For the proof see \cite[Lemma 4.2 ]{DZM3AS19} or \cite[Lemma 2.6]{MZnon97}. 
\end{proof}
Next by following the same reasoning as in the proof of Proposition 4.2 in \cite{DZM3AS19} and taking also into account Lemma \ref{c4lemma-max-t-x-0} we derive the following result:
\begin{proposition}[A priori  estimates in  $P_2 (t)$]\label{c4propo-a-priori-P-2}
There exist  $K_{7}, A_7 >  0$ such that for all $K_0 \geq  K_7,   A \geq A_7$,  there exists  $  \delta_7 \leq \frac{1}{2} \hat{\mathcal{U}}(0) $ and $C_7 (K_0,A)>0$ such that  for all $\delta_0 \leq \delta_7, C_0 \geq C_7$ there exists $\alpha_7 (K_0, \delta_0)>0$ such that for all $ \alpha_0  \leq \alpha_7$,    there exist $\epsilon_7( K_0, \delta_0, C_0) > 0$  such that for all $\epsilon_0 \leq \epsilon_7$,  there exists $T_7(\epsilon_0, A, \delta_0,  C_0)> 0$  such that  for all $T \leq T_7$ the following holds:  If $  U  \in S(T,K_0, \epsilon_0, \alpha_0, A, \delta_0, C_0,t  )$ for all $t \in [0, t_7]$ for some  $t_7 \in [0,  T)$,  then,  for all  $|x| \in \left[ \frac{K_0}{4} \sqrt{(T- t_*) |\ln (T-t_*)|}, \epsilon_0   \right],$        $|\xi| \leq  \alpha_0 \sqrt{|\ln \varrho(x)|} $  and   $  \tau \in \left[ \max  \left( - \frac{t(x)}{\varrho (x)},0\right), \frac{t_7 - t(x)}{\varrho (x)}\right]  $, we have
$$ \left| \mathcal{U}(x, \xi, \tau_*) - \hat{\mathcal{U}} (x, \xi, \tau_*)    \right| \leq  \frac{\delta_0}{2} \text{ and } \left| \nabla\mathcal{U} (x, \xi ,\tau) \right| \leq  \frac{ C_0}{2 \sqrt{| \ln \varrho(x)|}},$$
where  $\varrho (x) = T- t(x)$. 
\end{proposition}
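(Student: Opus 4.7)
The plan is to view $\mathcal{U}$ as a small perturbation of the purely temporal ODE profile $\hat{\mathcal{U}}$ and propagate the improved estimates obtained at the initial time $\tau_0=\max(0,-t(x)/\varrho(x))$ through the time window $\tau\in[\tau_0,(t_7-t(x))/\varrho(x)]\subset[\tau_0,1)$. Setting $\mathcal{V}:=\mathcal{U}-\hat{\mathcal{U}}$ and using \eqref{equa-rescaled-rigogous} together with the ODE defining $\hat{\mathcal{U}}$, one obtains
\begin{equation*}
\partial_\tau\mathcal{V}=\Delta_\xi\mathcal{V}+(\mathcal{U}^p-\hat{\mathcal{U}}^p)+\left(\frac{1}{p-1}\frac{\tilde{\theta}'_\tau(\tau)}{\tilde{\theta}(\tau)}-\varrho(x)\right)\mathcal{U},
\end{equation*}
so it suffices to control three contributions: the linear drift coming from $\mathcal{U}^p-\hat{\mathcal{U}}^p$, the non-local forcing, and the Laplacian which is handled by the smallness of $\nabla_\xi\mathcal{U}$.

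First, I would use item $(ii)$ of Lemma \ref{c4lemma-max-t-x-0} as initial datum: at $\tau=\tau_0$ and on the enlarged ball $|\xi|\le 2\alpha_0\sqrt{|\ln\varrho(x)|}$, one has $|\mathcal{V}(x,\xi,\tau_0)|\le\delta_6$ and $|\nabla_\xi\mathcal{U}(x,\xi,\tau_0)|\le C_6/\sqrt{|\ln\varrho(x)|}$. By choosing $\delta_6\ll\delta_0$ and, more importantly, selecting $C_7$ and then $\delta_7$ so that $C_6(K_0,A)$ is strictly less than $C_0/4$, one secures a safety margin that will absorb the increments produced along the $\tau$-evolution.

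Second, the key a priori bounds from items \eqref{c4estima-nabka-mathcal-U-lema}--\eqref{c4mathcal-U-bound-lema} of Lemma \ref{c4lemma-max-t-x-0} give $\mathcal{U}$ bounded above by $4$ and bounded below by a positive constant depending on $K_0$ on the slightly larger slab $|\xi|\le\frac{7}{4}\alpha_0\sqrt{|\ln\varrho(x)|}$. Hence $\mathcal{U}^p-\hat{\mathcal{U}}^p$ is Lipschitz in $\mathcal{V}$ with a $K_0$-dependent constant. The non-local forcing is the delicate term: by \eqref{defi-tilde-theta-tau} one has $\tilde{\theta}'_\tau(\tau)/\tilde{\theta}(\tau)=\varrho(x)\theta'(t)/\theta(t)$, and by items $(i)$ of Proposition \ref{propo-bar-mu-bounded}, that is \eqref{bound-bar-theta}--\eqref{bound-bar-theta-'}, this product is bounded by $C(C_2)\varrho(x)^\varepsilon$ for some $\varepsilon>0$. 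Together with $\varrho(x)\le C\epsilon_0^2/|\ln\epsilon_0|$ from Lemma \ref{lemma-t(x)}, the whole source term is pointwise bounded by $C(C_2,K_0)\varrho(x)^\varepsilon$, which can be made arbitrarily small by choosing $\epsilon_0$ small enough. A standard Gronwall inequality on the bounded interval $\tau\in[\tau_0,1)$, combined with interior parabolic estimates to propagate the gradient bound, then yields the claimed strict improvements $|\mathcal{V}|\le\delta_0/2$ and $|\nabla_\xi\mathcal{U}|\le C_0/(2\sqrt{|\ln\varrho(x)|})$ on the inner ball $|\xi|\le\alpha_0\sqrt{|\ln\varrho(x)|}$.

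Third, the main obstacle is the interplay between the non-local term and the gradient bound: the gradient estimate on $\mathcal{U}$ degrades like $1/\sqrt{|\ln\varrho(x)|}$, so to propagate it with a \emph{strict} factor $1/2$, one must rule out any drift that only decays logarithmically, and this is precisely what the exponential decay $\varrho(x)^\varepsilon$ of the non-local source secures. In practice this forces the hierarchy of parameter choices $K_0\gg 1$, $\alpha_0\ll 1$ depending on $(K_0,\delta_0)$, $C_7$ depending on $(K_0,A)$, $\delta_7$ small, $\epsilon_7$ depending on $(K_0,\delta_0,C_0)$, and $T_7$ depending on all of the above, matching the quantifier structure in the statement. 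Once these choices are made, the argument is a direct transcription of \cite[Proposition 4.2]{DZM3AS19} adapted to the non-local nonlinearity, so I would refer the reader to that work for the detailed parabolic computations and focus only on the modifications induced by the $\theta'/\theta$ term.
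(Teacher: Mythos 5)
Your proposal follows the same route as the paper: the paper's proof of Proposition~\ref{c4propo-a-priori-P-2} is itself a one-line pointer to \cite[Proposition~4.2]{DZM3AS19} combined with Lemma~\ref{c4lemma-max-t-x-0}, and your write-up is a faithful elaboration of exactly that strategy (decompose $\mathcal{U}=\hat{\mathcal{U}}+\mathcal{V}$, feed in the improved estimates of Lemma~\ref{c4lemma-max-t-x-0} at $\tau=\tau_0$, control the non-local forcing via Proposition~\ref{propo-bar-mu-bounded}, Gronwall plus interior parabolic regularity).

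One small but genuine slip in the handling of the forcing term: you claim $\bigl|\tfrac{1}{p-1}\tfrac{\tilde\theta'_\tau}{\tilde\theta}-\varrho(x)\bigr|$ is \emph{pointwise} bounded by $C\varrho(x)^\varepsilon$, but since $T-t=\varrho(x)(1-\tau)$ one actually obtains
\begin{equation*}
\varrho(x)\frac{|\theta'(t)|}{\theta(t)}\le C\,\varrho(x)\,(T-t)^{-1+\varepsilon}=C\,\varrho(x)^{\varepsilon}(1-\tau)^{-1+\varepsilon},
\end{equation*}
which is unbounded as $\tau\to1^{-}$ (and the time window does approach $\tau=1$ as $t_7\to T$). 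The argument is still correct, because what the Gronwall step requires is integrability in $\tau$, and $\int_{\tau_0}^{1}(1-\tau)^{-1+\varepsilon}\,d\tau=\varepsilon^{-1}(1-\tau_0)^\varepsilon<\infty$, so the accumulated drift is indeed $O(\varrho(x)^\varepsilon)$. You should replace ``pointwise bounded'' by ``has $L^1_\tau$-norm $O(\varrho(x)^\varepsilon)$''; otherwise the proposal matches the paper's intended proof.
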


\subsection{ Priori estimates in $P_3$:}  In that region we claim that the following holds:
\begin{proposition}[A priori estimates   in $P_3$]\label{c4propo-priori-P-3}
Let us  consider  positive $K_0, \epsilon_0, \alpha_0, A,$ $C_0, \eta_0$ and $\delta_0 \in [0, \frac{1}{2} \hat{\mathcal{U}}(0)].$ Then, there exists     $T_8( \eta_0)  > 0$  such that  for all    $T \leq T_8$, the following holds: We assume that    $U$ is a non negative  solution of \eqref{equa-U-theta-'-theta} on $[0, t_8] $ for some $t_8 < T	$, and  $U \in S(K_0,\epsilon_0, \alpha_0, A, \delta_0, C_0, \eta_0,t )$ for all $t \in [0,t_8]$ with  initial data  $U(0) = U_{d_0,d_1}$ given  in \eqref{c4defini-initial-data} for $|d_0|, |d_1| \leq 2$.  Then,  for all  $|x| \geq   \frac{\epsilon_0}{4}$   and  $ t \in (0, t_8],$ we get the a priori estimates:
\begin{eqnarray}
\left| U (x,t)  - U(x,0)  \right|  \leq  \frac{\eta_0}{2}, \label{c4U-U-0-control-P-3}\\
\left| \nabla U (x,t)  -  \nabla e^{t \Delta} U(x,0)  \right|   \leq \frac{\eta_
0}{2}.\label{c4nabla U-nabla-semi-intial}
\end{eqnarray}
\end{proposition}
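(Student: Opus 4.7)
The plan is to work in the Duhamel (mild) form of \eqref{equa-U-theta-'-theta} with respect to the Neumann heat semigroup $e^{t\Delta}$ on $\Omega$. Set $\beta(s):=\frac{1}{p-1}\frac{\theta'(s)}{\theta(s)}-1$; by Proposition \ref{propo-bar-mu-bounded} one has $|\beta(s)|\le C(T-s)^{-1+\varepsilon}$, so $\int_0^T|\beta(s)|\,ds\le CT^{\varepsilon}$. Then
\[
U(x,t)\;=\;e^{t\Delta}U_0(x)\;+\;\int_{0}^{t}e^{(t-s)\Delta}\bigl[U^p+\beta U\bigr](\cdot,s)(x)\,ds,
\]
so that \eqref{c4nabla U-nabla-semi-intial} is the gradient of the Duhamel term, while \eqref{c4U-U-0-control-P-3} is the sum of that Duhamel term and the continuity piece $e^{t\Delta}U_0(x)-U_0(x)$. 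The goal is to bound each piece by $\eta_0/4$ uniformly in $x\in P_3$ and $t\in(0,t_8]$, by taking $T$ small.

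For every $x\in P_3$ I would split $\Omega=B(x,r)\cup(\Omega\setminus B(x,r))$ with $r=\epsilon_0/16$. When $y\in B(x,r)$ one has $|y|\ge\epsilon_0/4-r\ge\epsilon_0/8$, so $y\in P_2(s)\cup P_3(s)$; by Definition \ref{defini-shrinking-set-S-t}(ii)--(iii) and Lemma \ref{lemma-estimate-U-x-t-in-S-t}, $U(y,s)$ and $U^p(y,s)$ are bounded by a constant $C=C(K_0,\epsilon_0,A,\delta_0,\eta_0)$ independent of $(s,T)$; moreover on $B(x,r)$ the initial datum reduces to the smooth function $H^\ast$ (with $C^1$ bound depending only on $\epsilon_0$), since $\chi_1\equiv 0$ there for $T$ small. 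Using that the Neumann heat kernel $G_N$ is a probability density satisfying $\int G_N(\tau,x,y)|y-x|\,dy\le C\sqrt{\tau}$, both inner contributions are bounded by $C(\epsilon_0)\bigl(\sqrt t+t+\int_0^t|\beta|\,ds\bigr)\le CT^{\min(1/2,\varepsilon)}$; the gradient version uses $|\nabla_xG_N(\tau,x,y)|\le C\tau^{-1/2}G_N(\tau,x,y)$ and still leaves an integrable $(t-s)^{-1/2}$ factor. When $y\in\Omega\setminus B(x,r)$, one has $|x-y|\ge r$, hence the Gaussian upper bound $G_N(\tau,x,y)\le C\tau^{-N/2}e^{-c\epsilon_0^2/\tau}$, bounded uniformly on $(0,T]$; pairing this with $\|U_0\|_{L^1(\Omega)}\le C(1+T^{N/2-1/(p-1)}|\ln T|^{N/2})$ (from the peak of $H^\ast$ near $0$) for the continuity piece, and with $\|U^p(s)+\beta(s)U(s)\|_{L^1}\le C+C(T-s)^{N/2-p/(p-1)}|\ln(T-s)|^{N/2}$ (summing the $P_1,P_2,P_3$ contributions from Lemma \ref{lemma-estimate-U-x-t-in-S-t}) for the Duhamel piece, the outer contributions are super-polynomially small in $1/T$.

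The principal obstacle is that $(T-s)^{N/2-p/(p-1)}|\ln(T-s)|^{N/2}$ need not be integrable in $s$: this happens precisely when $p/(p-1)-N/2\ge 1$, a regime not excluded by the Turing condition \eqref{subcritical-condition}. However, the Gaussian factor $\tau^{-N/2}e^{-c\epsilon_0^2/\tau}$ (with $\tau=t-s$) decays faster than any polynomial as $\tau\to 0^+$, so splitting the $\tau$-integration at a small threshold $\tau_0=\tau_0(\epsilon_0)$ absorbs the near-blowup singularity, leaving a bounded factor depending only on $\epsilon_0$; the residual $\tau\ge\tau_0$ piece contributes $O(T)$ or less. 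Summing all four small contributions and choosing $T\le T_8(\eta_0)$ sufficiently small depending on $\eta_0$ (and the other already-fixed parameters) yields both estimates $\le\eta_0/2$.
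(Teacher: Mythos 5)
Your plan is broadly the paper's: write equation \eqref{equa-U-theta-'-theta} in Duhamel form with the Neumann heat semigroup and estimate the continuity piece $e^{t\Delta}U_0-U_0$ and the source integral using the bounds encoded in the shrinking set. You also correctly isolate the genuine difficulty that the paper's one-line proof passes over: $\|U^p(s)\|_{L^\infty}\sim(T-s)^{-p/(p-1)}$ is not integrable in $s$, so one cannot just bound $e^{(t-s)\Delta}$ by its sup-norm. Your near/far decomposition at radius $\epsilon_0/16$, with the super-polynomial decay $G_N(\tau,x,y)\lesssim\tau^{-N/2}e^{-c\epsilon_0^2/\tau}$ on $\{|x-y|\ge\epsilon_0/16\}$ absorbing the $L^1$ mass of $U^p$ and of $U_0$ concentrated in $P_1$, is the right mechanism, and the near-field source is controlled via Lemma \ref{lemma-estimate-U-x-t-in-S-t}; that part of the argument is sound.

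The gradient estimate \eqref{c4nabla U-nabla-semi-intial} as you handle it, however, has a gap. You bound the near-field gradient of the Duhamel term by a $(t-s)^{-1/2}$ smoothing factor times the sup-norm of the source, giving $\int_0^t(t-s)^{-1/2}\bigl(1+|\beta(s)|\bigr)\,ds$; but Proposition \ref{propo-bar-mu-bounded} only gives $|\beta(s)|\le C(T-s)^{-1+\varepsilon}$ with $\varepsilon=\tfrac12\bigl(\tfrac N2-\tfrac r{p-1}\bigr)$, which is always $<\tfrac12$ when $N\le2$ and can be arbitrarily small in general. In that range
$$
\int_0^t(t-s)^{-1/2}(T-s)^{-1+\varepsilon}\,ds\ \sim\ (T-t)^{\varepsilon-\frac12},
$$
which blows up as $t\to T$ and cannot be made $\le\eta_0/2$ by shrinking $T$. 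The fix is not the crude smoothing bound but the cancellation $\int_\Omega\nabla_xG_N(\tau,x,y)\,dy=0$ (from $\int_\Omega G_N\,dy\equiv1$): write $\int_\Omega\nabla_xG_N(\tau,x,y)\bigl[f(y,s)-f(x,s)\bigr]\,dy$ with $f=U^p+\beta U$. On $B(x,\epsilon_0/16)$ one has $\|\nabla f(\cdot,s)\|_{L^\infty}\le C(\epsilon_0,C_0)\bigl(1+|\beta(s)|\bigr)$ by the gradient bounds in Definition \ref{defini-shrinking-set-S-t}$(ii)$--$(iii)$ together with Lemma \ref{lemma-estimate-U-x-t-in-S-t}, and $\int_\Omega|\nabla_xG_N(\tau,x,y)|\,|y-x|\,dy\le C$ uniformly in $\tau$; the far-field part of the cancellation integral is again super-polynomially small by the Gaussian tail. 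This produces a near-field gradient contribution $\lesssim\int_0^t(1+|\beta(s)|)\,ds\lesssim T^\varepsilon$ with no $(t-s)^{-1/2}$ loss, closing the argument.
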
 
\begin{proof}
The proof relies on the well-posedness property  of problem \eqref{equa-U-theta-'-theta}. More precisely, we  directly derive by  \eqref{equa-U-theta-'-theta} the following integral equation
\begin{eqnarray}\label{variation-of-parameters formula}
U(t) = e^{t \Delta} U_0 +  \int_0^t e^{(t-s) \Delta}\left[ U^p(s) + \left( \frac{1}{p-1} \frac{\theta'(s)}{\theta(s)} - 1 \right) U(s)\right]ds,
\end{eqnarray}
where $e^{t\Delta} $ stands for the semi-group of the laplacian $\Delta$ associated with  Neuman boundary conditions. Using now the  rough estimates stem from the definition of $S(t)$ then by virtue of \eqref{variation-of-parameters formula} we derive the desired a priori estimates \eqref{c4U-U-0-control-P-3} and \eqref{c4nabla U-nabla-semi-intial}.
\end{proof}

\section{Appendix}

\subsection{Proof of  Proposition  \ref{propo-bar-mu-bounded} }\label{appexidix-propo-theta}  We only consider the sub-critical case
\begin{equation} \label{defi-sub-critical}
1 - \frac{r \gamma}{p-1}  > 0
\end{equation}
together with super-critical one
\begin{equation}\label{defi-sup-critical}
1 - \frac{r \gamma}{p-1}  < 0,
\end{equation}
 whilst the critical case $1 - \frac{r \gamma}{p-1} =0$ is not considered in the current work.  
 
 \medskip
 \noindent
 Using now  Definition \ref{defi-theta-by-U}, we write
\begin{eqnarray*}
\theta(t) =  |\Omega|^{\frac{\gamma}{1- \frac{r \gamma}{p-1}} } \left(  \int_{P_1(t)}
U^r   + \int_{ \left\{ K_0 \sqrt{(T-t) |\ln(T-t)| }  \le |x| \leq \frac{\epsilon_0}{4} \right\}} U^r  + \int_{P_3(t)} 
U^r   \right)^{ -\frac{\gamma}{1- \frac{r \gamma}{p-1}}}.
\end{eqnarray*}

- The proof of item $(i)$: Since the goal is to prove the integrals in the above line to be bounded, the arguments of  \eqref{defi-sub-critical} and \eqref{defi-sup-critical} are the same. For that reason, we only give the proof involving to \eqref{defi-sub-critical}. Next using that \eqref{defi-sub-critical} entails
$$-\frac{\gamma}{1- \frac{r \gamma}{p-1}} < 0, $$
then in conjunction with hypothesis \eqref{condition-integral-U-0} and  via Definition \ref{defini-shrinking-set-S-t}$(iii)$ we deduce 
$$  \theta(t)  \leq  |\Omega|^{\frac{\gamma}{1- \frac{r \gamma}{p-1}} } \left( \int_{P_3} 
U^r   \right)^{ -\frac{\gamma}{1- \frac{r \gamma}{p-1}}}   \leq C( \eta_0, C_2). $$
Using Lemma \ref{lemma-estimate-U-x-t-in-S-t}, item $(i)$,   we have
\begin{eqnarray*}
\int_{P_1} U^r  \leq  C (T-t)^{  \frac{N}{2} - \frac{r}{p-1} } |\ln(T-t)|^{\frac{N}{2}} \leq  \int_{\Omega} U_0^r,
\end{eqnarray*}
provided that $T \leq T'(K_0, C_2)$ and $ \frac{N}{2} - \frac{r}{p-1} >0$.
Then,  we use again item $(ii)$ in the Lemma to derive
\begin{eqnarray*}
\int_{ \{ K_0 \sqrt{(T-t) |\ln(T-t)|}  \le |x| \leq \frac{\epsilon_0}{4} \}} U^r   & \leq  &  C \int_{|x| \leq \epsilon_0 }  \left[ \frac{|x|^2}{|\ln|x||}\right]^{-\frac{1}{p-1}} dx\\
& \leq   & \int_{\Omega} U_0^r, 
\end{eqnarray*}
provided that  $ \epsilon \leq \epsilon'(K_0, C_2) $ and $ \frac{N}{2} - \frac{r}{p-1} >0 $.
Similarly, we also get
$$ \int_{P_3(t)} U^r  \leq C(\eta_0) \left( \int_\Omega U_0^r +1 \right).$$
Thus, we obtain
\begin{eqnarray}
\theta(t) \geq \frac{1}{C(C_2,\eta_0)}, 
\end{eqnarray}
which  concludes  \eqref{bound-bar-theta}.  

\medskip
\noindent
Next, we proceed with the proof of \eqref{bound-bar-theta-'}:   To this end, we express below the  formula of $\theta'(t)$
\begin{eqnarray*}
\theta'(t) = - \frac{\gamma}{1 - \frac{r \gamma}{p-1}}    \left(  \def\avint{\mathop{\,\rlap{--}\!\!\int}\nolimits} \avint_\Omega
U^r\,dx  \right)^{ -\frac{\gamma}{1- \frac{r \gamma}{p-1}} -1} \times \frac{1}{|\Omega|} \int_\Omega r U_t U^{r-1}  dx, 
\end{eqnarray*}
by which it is evident that it is sufficient to estimate the following integral 
$$ \int_\Omega     U^{r-1} U_t  dx.$$
Note that by virtue of equation  \eqref{equa-U-theta-'-theta} we can write 
\begin{eqnarray}
\int_\Omega     U^{r-1} U_t  dx = \int_\Omega \Delta U U^{r-1} + \int_{\Omega}  U^{  p-1 + r  }    + \left( \frac{1}{p-1} \frac{\theta'(t)}{\theta(t)}  - 1\right)  \int_{\Omega} U^r. \label{integral-U-t-U-r-1}
 \end{eqnarray}
 Then, we derive
 \begin{eqnarray}
 \theta'(t)  = -  \frac{r \gamma}{p-1}  \frac{1}{|\Omega| } \left\{\int_\Omega \Delta U U^{r-1} +\int_\Omega U^{p-1+r} -\int_{\Omega} U^r \right\}.\label{express-theta-'}
 \end{eqnarray}

 Next we recall some  necessary material for the proof:   From Lemma \ref{lemma-estimate-U-x-t-in-S-t}, we roughly estimate
 \begin{equation}\label{asymptotic-U}
   \begin{array}{rcl}
        | \nabla^i_x U (x,t)|  & \leq &  C  (T-t)^{-\frac{1}{p-1} - \frac{i}{2}} \text{ for all }      x \in P_1(t), \\[0.2cm]
     | \nabla_x^{i} U(x,t)| & \leq & C( \delta_0, C_0 ) \left[\frac{|x|^2}{|\ln|x||} \right]^{-\frac{1}{p-1} -\frac{i}{2}} \text{ for all } x \in P_2(t),\\[0.2cm]
     | \nabla^i_x U(x,t)| & \leq &   |\nabla^i_x U_0| + \eta_0  \text{ for all } x \in P_3(t),
   \end{array}
 \end{equation}
 for all $ i \in \{0,1 \}.$
 
Besides that, we use the Green formula to derive 
\begin{equation}\label{green-formula}
    \int_\Omega U^{r-1} \Delta U  = (1 -r) \int_\Omega U^{r-2} |\nabla U|^2,
\end{equation}
for all $U \in W^{2,\infty}$ and $ \frac{\partial U}{\partial \nu}   =0.$

In particular, we have the following fundamental  integral for $a < b \ll 1, n > 0, m <0$ and $ m\neq -1$: 
\begin{equation}\label{fundamental}
  \left|  \int_a^b  (-\ln s )^n s^{m} ds  \right| \leq    C(n,m) \left( (-\ln b)^n b^{1+m} + (-\ln a)^n a^{1 +m} \right).
\end{equation}
- Integrals in $P_1$: Using the first estimate in \eqref{asymptotic-U} we  can  obtain
\begin{eqnarray}
\int_{P_1} U^{p-1 +r}  & \leq & C  (T-t)^{-1 + \frac{N}{2} - \frac{ r}{p-1}} |\ln(T-t)|^{\frac{N}{2}},\\
\int_{P_1} U^{r-2} | \nabla U|^2  & \leq & C  (T-t)^{- 1 + \frac{N}{2} - \frac{r}{p-1}  } |\ln(T-t)|^\frac{N}{2}.
\end{eqnarray}
- Integrals in $P_2$:   In fact, we use the second  estimate in \eqref{asymptotic-U} to obtain the following
\begin{eqnarray*}
| U(x,t) |^{p-1 + r}  \leq C ( \delta_0) \left[ \frac{|x|^2}{|\ln|x||} \right]^{ -1  - \frac{r}{p-1}},\\
| U(x,t)|^{r-2}  |\nabla U(x,t)|^2 \leq C(C_0) \left[ \frac{|x|^2}{|\ln|x||} \right]^{ -1  - \frac{r}{p-1}}.
\end{eqnarray*}
Besides
\begin{eqnarray*}
& &\int_{K_0 \sqrt{(T-t)|\ln(T-t)|} \leq |x| \leq \epsilon_0}\left[ \frac{|x|^2}{|\ln|x||} \right]^{ -1  - \frac{r}{p-1}}  \\
&=& \int_{K_0 \sqrt{(T-t)|\ln(T-t)|}}^{\epsilon_0} \left[ \frac{z^2}{| \ln|z||} \right]^{ - 1  - \frac{r}{p-1}} z^{N-1} dz\\[0.2cm]
& = &  \int_{K_0 \sqrt{(T-t)|\ln(T-t)|}}^{\epsilon_0} (-\ln z)^{1 + \frac{r}{p-1}} z^{ - 3 - \frac{2 r}{p-1} + N} dz:=I,
\end{eqnarray*}
and thus by virtue of  \eqref{fundamental} we deduce
$$ I \leq C  \left(\epsilon_0^{-1 +N - \frac{2r }{p-1}} |\ln \epsilon_0|^{1 + \frac{r}{p-1}}  + (T-t)^{- 1 + \frac{N}{2} - \frac{r}{p-1}} |\ln(T-t)|^{ \frac{N}{2}}  \right),$$
which finally entails
\begin{eqnarray*}
& & \int_{P_2} [ U^{p-1+r} + U^{r-2} |\nabla U|^2  ] \\
&\leq &   C( \delta, C_0)  \left(\epsilon_0^{-1 +N - \frac{2r }{p-1}} |\ln \epsilon_0|^{1 + \frac{r}{p-1}}  + (T-t)^{-1 + \frac{N}{2} - \frac{r}{p-1}} |\ln(T-t)|^{ \frac{N}{2}}  \right).
\end{eqnarray*}
Additionally, since in $P_3$ the gradient $\nabla^i U$ is considered as a small perturbation of  initial data we have
\begin{eqnarray*}
\int_{P_3} [ U^{p-1+r} + U^{r-2} |\nabla U|^2  ] 
\leq    C(\Omega, \eta_0). 
\end{eqnarray*}
Finally, choose  $\varepsilon =\frac{1}{2} \left(\frac{N}{2} -\frac{r}{p-1} \right)$, then,  there exists  $T \leq T'( K_0, \delta_0,C_0, \eta_0 )$  and there holds
$$ \int_{\Omega} \left[ U^{p-1+r} + U^{r-2} |\nabla U|^2  \right] \leq (T-t)^{-1 + \varepsilon} .  $$
Using \eqref{bound-bar-theta}, we have 
$$ \int_\Omega  U^r \leq C( \eta_0, C_2). $$
Regarding to \eqref{express-theta-'},  it concludes \eqref{bound-bar-theta-'}. Thus, item $(i)$ follows.  Clearly, $(ii)$ is  derived by item $(i)$. 

\section{Parabolic estimates with Neumann condition}

In this section we provide some useful parabolic estimates by using well known growth conditions of the Neumann heat kernel. 

Let us consider  the following equation
\begin{equation}\label{equa-linear-Neumann}
\left\{ 
\begin{array}{rcl}
\partial_t U  & = & \Delta U \text{ in } \Omega  \times (0,T)\\
\frac{\partial U}{ \partial \nu } &= & 0 \text{ on } \partial \Omega  \times (0,T) \\
U(t=0)  & = & U_0 \text{ on } \Omega.  
\end{array}\right.
\end{equation}
Problem \eqref{equa-linear-Neumann} generates  the associated semi-group $e^{t\Delta}$ and its solution is given by
$$ U(t) =  e^{t \Delta}( U_0) = \int_{\Omega} G(x,y,t) U_0(y) dy, $$
and thus
$$ \frac{\partial e^{t \Delta} U_0 }{\partial \nu} = 0 \text{ for all } t >0 \text{ and } x \in \partial \Omega. $$
In particular,  the Neumann heat kernel $G(x,y,t) $ satisfies the following growth estimates
\begin{equation}\label{proper-Het-Kerm=nel}
    \left| \nabla_x^i G(x,y,t)      \right| \leq C t^{ - \frac{N + i}{2} }\exp \left( - C(\Omega) \frac{|x-y|^2}{t} \right), C(\Omega) >0, \forall i=0,1,
\end{equation}
for all $x,  y \in \Omega$ and  $ t\neq 0,$ cf.  \cite[Lemma 3.3]{YZPA13}.

Now we can prove the following:
\begin{lemma}Let us consider initial data $ U_0(d_0, d_1)$ defined as in   \eqref{c4defini-initial-data} and define
$$ L(t):= e^{t \Delta } (U_0(d_1,d_2)). $$
Then, 
$$ \| L(t)\|_{L^\infty(\Omega \cap \{|x| \geq \frac{\epsilon_0}{8}\})} + \|\nabla L(t)\|_{L^\infty(\Omega \cap \{|x| \geq \frac{\epsilon_0}{8}\})} \leq C(\epsilon_0). $$
\end{lemma}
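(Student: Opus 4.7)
The plan is to represent $L(x,t) = \int_\Omega G(x,y,t) U_0(y)\,dy$ via the Neumann heat kernel and to split $U_0$ according to whether $y$ lies near or far from the origin, where its singular behaviour is localised. Concretely, I would pick a smooth cutoff $\eta$ equal to $1$ on $\{|y| \ge \epsilon_0/16\}$ and to $0$ on $\{|y| \le \epsilon_0/32\}$, and decompose $L = L_1 + L_2$ with
\[
L_1(t) := e^{t\Delta}(\eta U_0), \qquad L_2(t) := e^{t\Delta}((1-\eta) U_0).
\]
For $T$ small enough, the cutoff $\chi_1$ in \eqref{c4defini-initial-data} is supported in $\{|y|\le \epsilon_0/32\}$, so $\eta U_0 = H^*$ on the support of $\eta$; in particular $\eta U_0 \in C^1(\bar\Omega)$ with $\|\eta U_0\|_{C^1} \le C(\epsilon_0)$, and $\eta U_0$ is constant (equal to $1$) in a neighbourhood of $\partial\Omega$, trivially satisfying the Neumann compatibility condition.

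For $L_1$, the $L^\infty$-contractivity of the Neumann heat semigroup yields $\|L_1(t)\|_{L^\infty}\le C(\epsilon_0)$. The companion gradient bound $\|\nabla L_1(t)\|_{L^\infty}\le C(\epsilon_0)$, uniform in $t\ge 0$, follows from standard parabolic Schauder regularity for the Neumann problem applied to the $C^1$ datum $\eta U_0$, whose gradient vanishes in a neighbourhood of $\partial\Omega$.

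For $L_2$, the key geometric observation is that whenever $|x|\ge \epsilon_0/8$ and $y$ lies in the support of $1-\eta$, the separation satisfies $|x-y|\ge \epsilon_0/16$. Plugging this into the kernel estimate \eqref{proper-Het-Kerm=nel} gives, for $i=0,1$,
\[
|\nabla_x^i G(x,y,t)| \le C\, t^{-(N+i)/2} \exp\!\left(-\tfrac{C(\Omega)\epsilon_0^2}{256\, t}\right).
\]
The elementary inequality $s^{-\beta}e^{-c/s} \le C_{\beta,k,c}\, s^{k}$ (valid for all $s>0$ and $\beta,k\ge 0$, since $s^{\beta+k}e^{-c/s}$ is bounded on $(0,\infty)$) then allows me to take $k\ge \tfrac{1}{p-1}$, obtaining $|\nabla_x^i G(x,y,t)|\le C(\epsilon_0)\, t^{1/(p-1)}\le C(\epsilon_0)\, T^{1/(p-1)}$ for $t\in(0,T]$. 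Combined with the crude pointwise bound $|U_0|\le C\,T^{-1/(p-1)}$ on $\Omega$ (immediate from \eqref{c4defini-initial-data} since $\varphi$ is globally bounded) and integration over the bounded set $\{|y|\le\epsilon_0/16\}$, this yields both $\|L_2(t)\|_{L^\infty}$ and $\|\nabla L_2(t)\|_{L^\infty}$ bounded by $C(\epsilon_0)$ uniformly in $t\in(0,T]$.

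The main delicate point is the gradient bound for $L_1$ uniformly down to $t=0$: the generic smoothing estimate $\|\nabla e^{t\Delta}\|_{L^\infty\to L^\infty} \lesssim t^{-1/2}$ is not uniform, so one cannot simply invoke it with the $L^\infty$ norm of $\eta U_0$. The resolution is to exploit the full $C^1$ regularity of $\eta U_0$ together with its trivial Neumann compatibility (being locally constant near $\partial\Omega$), which places $L_1$ within the scope of uniform-in-$t$ $C^1$ regularity theory for the Neumann heat equation. All remaining estimates reduce to routine manipulation of the Gaussian kernel bound \eqref{proper-Het-Kerm=nel}.
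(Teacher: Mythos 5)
Your proof is essentially correct and follows the natural route: split $U_0$ with a cutoff $\eta$ isolating the singular region near the origin, control $L_1 = e^{t\Delta}(\eta U_0)$ by regularity of the smooth part, and control $L_2 = e^{t\Delta}((1-\eta)U_0)$ by the Gaussian decay of the Neumann kernel together with the geometric separation $|x-y|\ge \epsilon_0/16$. The paper's own proof is a terse pointer to the kernel estimate \eqref{proper-Het-Kerm=nel} and to the technique of \cite[Lemma 4.3]{DZM3AS19}, which works directly on the integral representation (applying \eqref{proper-Het-Kerm=nel} with $i=1$ after splitting) rather than invoking Schauder theory; both routes are valid, and yours makes the conceptual structure clearer.

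Two small points to correct. First, the parenthetical claim that $s^{\beta+k}e^{-c/s}$ is bounded on $(0,\infty)$ is false when $\beta+k>0$ (it blows up as $s\to\infty$); what you actually need, and what is true, is boundedness on $(0,T]$ with $T\le 1$, which is exactly the range in which you apply the estimate. Second, calling $\eta U_0$ merely a ``$C^1$ datum'' undersells what is available and what is needed: $C^1$ alone would not yield a uniform-in-time $C^1$ bound on $L_1$; you should say explicitly that $\eta U_0$ is in fact $C^{1,\alpha}$ (indeed smooth, since $H^*$ is smooth away from the origin and $\chi_1$ vanishes on $\mathrm{supp}\,\eta$ for $T$ small), constant near $\partial\Omega$ so the Neumann compatibility holds trivially, and then Schauder theory gives the uniform $C^1$ control. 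With those adjustments the argument is complete.
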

\begin{proof}
We observe that the first desired estimate for $L$  follows directly from \eqref{proper-Het-Kerm=nel} with $i=0$.  It remains to prove the second one. To this end the technique from \cite[Lemma 4.3]{DZM3AS19} can be applied thanks to estimate \eqref{proper-Het-Kerm=nel}. So, we kindly refer the readers to check the proof of this lemma.      \end{proof}

\begin{lemma}[Parabolic estimates]   Let us consider positive parameters $T,K_0, \epsilon_0, \alpha_0, A, \delta_0, C_0 $ and $\eta_0$ such that Proposition \ref{propo-bar-mu-bounded} holds true and we assume furthermore that 
$$U(t) \in S(K_0, \epsilon_0, \alpha_0, A, \delta_0, C_0,\eta_0,t), \forall t \in [0,t_1).$$ Then we have the following
\begin{eqnarray}
|\nabla^i U(x,t)| \leq C (T-t)^{-\frac{1}{p-1} -\frac{i}{2}},\label{parabolic-estimate-fornabla-i-U}
\end{eqnarray}
for all $i=0,1$ and for all $(x,t) \in \Omega \times  [0,t_1)$.  In particular,  for all $x \in \Omega$, there exists $R_x >0$ such that
\begin{equation}
    |\partial_t U(x,t)| \leq C(A, T,R_x).\label{estimate-partial-t-U}
\end{equation}
\end{lemma}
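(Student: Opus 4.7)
The plan is to derive \eqref{parabolic-estimate-fornabla-i-U} from the pointwise bounds collected in Lemma \ref{lemma-estimate-U-x-t-in-S-t} by using the standard trichotomy of $\Omega$, and then to obtain the time-derivative bound \eqref{estimate-partial-t-U} by localizing away from the blowup point and combining a Duhamel representation with interior parabolic regularity. For \eqref{parabolic-estimate-fornabla-i-U}, I would split $\Omega$ according to $P_1(t)\cup P_2(t)\cup P_3(t)$ of \eqref{defini-P-1-t}--\eqref{defini-P-3-t} and treat each region separately. On $P_1(t)$, Lemma \ref{lemma-estimate-U-x-t-in-S-t}$(i)$ already gives $|\nabla^i U|\le C(A)(T-t)^{-1/(p-1)-i/2}$ up to a bounded factor $1/(1+\sqrt{|\ln(T-t)|})$. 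On $P_3(t)$, Lemma \ref{lemma-estimate-U-x-t-in-S-t}$(iii)$ yields $U$ and $\nabla U$ uniformly bounded by $C(\epsilon_0,\eta_0)$, and since $T\le 1$ the weight $(T-t)^{-1/(p-1)-i/2}$ is bounded below by $1$, so the bound is automatic.

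The substantive work is on $P_2(t)$, where the pointwise scale $|x|^2/|\ln|x||$ appearing in Lemma \ref{lemma-estimate-U-x-t-in-S-t}$(ii)$ must be compared with $T-t$. Using the definition of $P_2(t)$ gives $|x|^2\ge (K_0^2/16)(T-t)|\ln(T-t)|$, and since $|x|\le \epsilon_0\ll 1$ one has $|\ln|x||\le C|\ln(T-t)|$ (by Lemma \ref{lemma-t(x)} the ratio $|\ln(T-t)|/|\ln|x||$ tends to $2$ as $x\to 0$). Combining, $|x|^2/|\ln|x||\ge c(K_0)(T-t)$. Raising to the power $-1/(p-1)-i/2$ yields the $U$ bound, while for the gradient the extra $1/|\ln|x||$ factor supplied by Lemma \ref{lemma-estimate-U-x-t-in-S-t}$(ii)$ is $\le 1$. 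Patching the three regions concludes \eqref{parabolic-estimate-fornabla-i-U}.

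For \eqref{estimate-partial-t-U}, I would fix $x\in\Omega$ and choose $R_x>0$ small enough that $B(x,2R_x)$ stays at positive distance both from the blowup point $0$ and from $\partial\Omega$. Then $B(x,2R_x)\subset P_2(t)\cup P_3(t)$ for every $t\in [0,t_1)$, and Part~1 together with Lemma \ref{lemma-estimate-U-x-t-in-S-t} provides a uniform-in-$t$ bound $\|U\|_{L^\infty(B(x,2R_x))} + \|\nabla U\|_{L^\infty(B(x,2R_x))}\le M(R_x)$. I would then apply interior parabolic ($L^p$ or Schauder) regularity on the cylinder $B(x,R_x)\times [0,t_1)$ to the equation $\partial_t U = \Delta U + U^p + (\theta'/((p-1)\theta) - 1)U$, bounding $\Delta U$ and reading $\partial_t U$ off the PDE. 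An equivalent route is to differentiate the variation-of-parameters representation of $U$ in $t$ and invoke the Neumann heat kernel estimates \eqref{proper-Het-Kerm=nel} for $\nabla_x G$ on a ball bounded away from $\partial\Omega$.

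The principal obstacle is the possibly singular coefficient $|\theta'(t)/\theta(t)|\le C(T-t)^{-1+\varepsilon}$ coming from Proposition \ref{propo-bar-mu-bounded}$(i)$. Since $\varepsilon>0$, this factor is time-integrable on $[0,T)$, so in the Duhamel representation the singularity is absorbed upon integration against the heat kernel, producing a constant depending on $T$ and $\varepsilon$ only that can be bundled into $C(A,T,R_x)$. Combined with the uniform $L^\infty$-bound on $U$ over $B(x,R_x)$ secured in Part~1, this furnishes the uniform-in-$t$ bound on $\partial_t U(x,t)$ of the claimed form.
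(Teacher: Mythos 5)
Your treatment of \eqref{parabolic-estimate-fornabla-i-U} is correct and matches the paper's implicit argument: the paper says it is ``directly derived from Definition~\ref{defini-shrinking-set-S-t}'', and in practice this means exactly the $P_1\cup P_2\cup P_3$ splitting with Lemma~\ref{lemma-estimate-U-x-t-in-S-t}, together with the comparison $\frac{|x|^2}{|\ln|x||}\gtrsim(T-t)$ on $P_2$ that you verify via $|x|\ge\frac{K_0}{4}\sqrt{(T-t)|\ln(T-t)|}$ and $|\ln|x||\lesssim|\ln(T-t)|$. One small inaccuracy: the inclusion $B(x,2R_x)\subset P_2(t)\cup P_3(t)$ for \emph{every} $t\in[0,t_1)$ is false for small $t$ (a fixed annulus around $x\ne0$ sits inside $P_1(t)$ while $t$ is bounded away from $T$), but this is harmless since for such $t$ the weight $(T-t)^{-1/(p-1)-i/2}$ is itself bounded.

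For \eqref{estimate-partial-t-U}, there is a genuine gap in the handling of the singular coefficient. You correctly flag that $|\theta'(t)/\theta(t)|\le C(T-t)^{-1+\varepsilon}$ is the main obstacle, but the claim that this is ``absorbed upon integration against the heat kernel'' does not hold, because in neither route you propose does that factor end up under the time integral. Reading $\partial_t U$ directly off the PDE gives
\begin{equation*}
\partial_t U(x,t)=\Delta U(x,t)+U^p(x,t)+\Bigl(\tfrac{1}{p-1}\tfrac{\theta'(t)}{\theta(t)}-1\Bigr)U(x,t),
\end{equation*}
where the coefficient is evaluated pointwise at $t$; and differentiating the Duhamel identity $U(t)=e^{(t-t_0)\Delta}U(t_0)+\int_{t_0}^t e^{(t-s)\Delta}f(s)\,ds$ produces $\partial_t U=\Delta e^{(t-t_0)\Delta}U(t_0)+f(t)+\int_{t_0}^t \Delta e^{(t-s)\Delta}f(s)\,ds$, in which the boundary term $f(t)=U^p(t)+(\tfrac{\theta'(t)}{(p-1)\theta(t)}-1)U(t)$ again carries $\theta'(t)$ \emph{outside} the integral (and the remaining integral has a nonintegrable $(t-s)^{-1}$ kernel that itself needs a Hölder argument on $f$). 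Thus nothing in your derivation cancels or integrates away the $(T-t)^{-1+\varepsilon}$ factor, and the argument as written yields at best $|\partial_t U(x,t)|\le C(A,T,R_x)\bigl(1+(T-t)^{-1+\varepsilon}\bigr)$, which is weaker than the stated bound. To close the gap you would need to either show that on $B(x,R_x)$ the Laplacian term compensates the singular zeroth-order term (which requires information beyond the $C^0$/$C^1$ bounds furnished by $S(t)$), or to obtain a sharper, uniform-in-$t$ bound on $\theta'$ than Proposition~\ref{propo-bar-mu-bounded}$(i)$ provides, or else to state the conclusion with the extra $(T-t)^{-1+\varepsilon}$ factor (which would still be integrable on $[0,T)$ and hence sufficient for the intended application to the convergence of $U(x,\cdot)$ as $t\to T$).
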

\begin{proof} Estimate
\eqref{parabolic-estimate-fornabla-i-U} is directly derived  by Definition  \eqref{defini-shrinking-set-S-t}. Besides, the proof of \eqref{estimate-partial-t-U} is quite the same as the proof estimate (F.4) in \cite[Lemma F.1]{DZM3AS19}.

\end{proof}

\subsection{ Some estimates  of terms in  equation \eqref{c4equa-Q}}
In this part, we aim to estimate the terms $ V, B,R$ and $G$ in equation \eqref{c4equa-Q}
\begin{lemma}\label{lemma-V-B-R}
For all $A,K_0 \geq 1$, there exists $s_7(A,K_0) \geq 1$, such that  for all $s \geq s_7  $ and $q \in V_A(s)$, then the following hold: 
\begin{itemize}
    \item[$(i)$]  Estimate for $V$, defined as in \eqref{c4defini-chi-y-s}:
    $$ |V(y,s)| \leq C \text{ and } \left| V(y,s) + \frac{(|y|^2 -2N)}{ 4s}  \right| \leq \frac{C(1+|y|^4)}{s^2}, \forall y\in \R^N.$$
    \item[$(ii)$] Estimate for $B(q)$, defined as in \eqref{c4defini-B-Q}:  
    $$ \left| \chi(y,s) B(q) \right| \leq C|q|^2 \text{ and } |B(q)|  \leq C|q|^{\bar p}, \text{ where } \bar p= \min(p,2),$$
    for all $y\in \R^N.$
    \item[$(iii)$] Estimate for  $R(y,s)$: 
    \begin{eqnarray*}
    \left|R(y,s) \right| &\leq & \frac{C}{s},\\
    \left|  R(y,s) - \frac{c_1}{s^2}\right| &\leq & \frac{C(1+|y|^4)}{s^3},\\
    \left|  \nabla R(y,s) \right| & \leq & \frac{C(|y| +|y|^3)}{\bar p+1},
    \end{eqnarray*}
    for all $y\in \R^N.$
\end{itemize}
\end{lemma}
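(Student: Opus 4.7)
The proof consists of three essentially independent Taylor-expansion computations around the explicit self-similar profile $\varphi$ defined in \eqref{defi-varphi}. Throughout, I would exploit the key fact that $\varphi \in V_A(s)$ implies $q \in L^\infty$, together with the uniform bound $(p-1)^{-\frac{1}{p-1}} \le \varphi \le \kappa + \frac{\kappa N}{2ps}$ from the explicit form of $\varphi$.

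\textbf{Estimates for $V$.} Writing $\varphi = \kappa(1 + z)^{-\frac{1}{p-1}} + \frac{\kappa N}{2ps}$ with $z = \frac{(p-1)}{4p}\frac{|y|^2}{s}$, I would raise to the $(p-1)$-th power and expand:
\begin{equation*}
\varphi^{p-1} = \frac{1}{p-1}\Bigl[(1+z)^{-1} + O(s^{-1})\Bigr] = \frac{1}{p-1} - \frac{1}{p-1}z + O(z^2) + O(s^{-1}).
\end{equation*}
Multiplying by $p$ gives $V = -\frac{p}{p-1}z + O(z^2+s^{-1})$. Since $\frac{p}{p-1}\cdot\frac{p-1}{4p}=\frac{1}{4}$, this produces exactly the leading term $-\frac{|y|^2}{4s}$. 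The contribution of $\frac{\kappa N}{2ps}$ gives the correction $+\frac{2N}{4s}$ after careful expansion. The first bound $|V|\le C$ follows from the uniform upper and lower bounds on $\varphi$; the second, more refined bound, comes by keeping the next-order term in the Taylor remainder, which is controlled by $z^2+s^{-2} \le C(1+|y|^4)/s^2$.

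\textbf{Estimates for $B(q)$.} By Taylor's formula with integral remainder,
\begin{equation*}
B(q) = (\varphi+q)^p - \varphi^p - p\varphi^{p-1} q = p(p-1)\int_0^1 (1-\theta)(\varphi+\theta q)^{p-2}\,d\theta \cdot q^2,
\end{equation*}
whenever $\varphi+\theta q \ge 0$. On the support of $\chi$, Lemma \ref{lemma-properties-V-A-s} gives $|q|\le CA^2/\sqrt s$, so $\varphi + \theta q$ stays bounded above and below away from zero for $s\ge s_7$ large; the integral is then bounded by a constant, yielding $|\chi(y,s) B(q)|\le C|q|^2$. For the global bound, I would split into the regime where $|q|\le \varphi$ (use the quadratic estimate just proved) and the regime $|q|\ge \varphi$; in the latter, $|B(q)|\le |q+\varphi|^p + \varphi^p + p\varphi^{p-1}|q| \le C|q|^p$ when $p\le 2$ (where $|q|^p\le|q|^{\bar p}$ automatically), and when $p\ge 2$ one uses $|q|\ge \varphi\ge c$, so $|q|^2 \le |q|^p\cdot c^{2-p}$ but then $|q|^{\bar p}=|q|^2$ dominates since $|q|$ remains bounded for $q\in V_A(s)$; the $\min(p,2)$ exponent is precisely what makes both regimes match.

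\textbf{Estimates for $R$.} Since $\varphi$ is an explicit function, $R(y,s)$ can be computed in closed form. The leading profile $\varphi_0(y/\sqrt s) = (p-1+\tfrac{(p-1)^2}{4p}\tfrac{|y|^2}{s})^{-1/(p-1)}$ satisfies the stationary Hamilton--Jacobi part of the equation up to terms of order $s^{-1}$, and the correction $\frac{\kappa N}{2ps}$ is tailored so that its contribution to $-\partial_s\varphi - \frac{\varphi}{p-1} + \varphi^p$ cancels the $s^{-1}$ contribution of $\Delta\varphi - \frac12 y\cdot\nabla\varphi$; this is the classical computation of Bricmont--Kupiainen \cite{BKnon94} and Merle--Zaag \cite{MZdm97}. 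A direct Taylor expansion of each term $\partial_s\varphi, \Delta\varphi, y\cdot\nabla\varphi, \varphi^p$ in powers of $z=\frac{(p-1)}{4p}\frac{|y|^2}{s}$ then yields $R = \frac{c_1}{s^2} + O\bigl(\frac{1+|y|^4}{s^3}\bigr)$, where the $c_1/s^2$ term comes from the next-order cancellation; the cruder bound $|R|\le C/s$ follows a fortiori. The gradient estimate is obtained similarly by differentiating the explicit expressions, where each derivative in $y$ brings a factor of $|y|/s$ or a power of $|y|$ times a smaller negative power of $s$.

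\textbf{Main obstacle.} The delicate point is the second bound on $V$ and the refined bound $|R - c_1/s^2|\le C(1+|y|^4)/s^3$: one must carry out the Taylor expansions carefully enough to track the \emph{exact} cancellations that make $R$ a second-order correction rather than first-order. This is what distinguishes $\varphi$ (with the $\frac{\kappa N}{2ps}$ correction) from the bare profile $\varphi_0$, and the whole spectral analysis on $\mathcal{L}+V$ depends on these cancellations. Since the required estimates are identical to those in the classical heat equation setting (the added term $G$ does not enter here), the computations follow \cite[Lemma 4]{BKnon94} and \cite[Section 2]{MZdm97} verbatim.
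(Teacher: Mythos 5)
Your proposal takes essentially the same route as the paper: the paper proves item $(i)$ by "a simple Taylor expansion" and delegates $(ii)$ and $(iii)$ to \cite[Lemma 3.15]{MZdm97} and \cite[Lemma B.5]{MZnon97}, which are exactly the Taylor-with-integral-remainder and profile-cancellation computations you spell out. Your sketch is correct in substance; one small inaccuracy is the assertion "$|q|\ge\varphi\ge c$" in the $B(q)$ estimate, since $\varphi(y,s)\to \kappa N/(2ps)$ as $|y|\to\infty$ and so is not bounded below by a universal constant — what actually closes that case when $p\ge 2$ is, as you then note, that $|q|$ is uniformly small on $V_A(s)$, so $|q|^p\le C|q|^2$.
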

\begin{proof}
The proof of item $(i)$ arises from a simple Taylor expansion. The proofs of $(ii)$ and $(iii)$ can be found in  of \cite[Lemma 3.15]{MZdm97} and \cite[Lemma B.5]{MZnon97}. 
\end{proof}
Next, Proposition \ref{propo-bar-mu-bounded} ensures that $G$, defined \eqref{c4defini-N-term}, decays exponentially. In particular there holds:
\begin{lemma}[Size of $G$] \label{size-G}
There exists $K_7 \geq 1 $ such that  for all  $K_0 \geq K_{7},  \delta_{0 } > 0,  $ there exists  $\alpha_{7} (K_0, \delta_{0}) > 0$  such that for all  $\alpha_0 \leq   \alpha_{7}, M_0>0$ there exists       $\epsilon_{7} (K_0, \delta_{0}, \alpha_0,M_0) > 0$ such that   for all   $\epsilon_0  \leq \epsilon_7$  and  $A \geq 1, C_0 > 0, \eta_0 > 0$, there exists $T_{7}   > 0$ such that  for  all  $T  \leq T_7$ the following holds:  Assuming   $U$ is a non negative  solution  of   equation \eqref{equa-U-theta-'-theta}    on $[0, t_1],$ for some $t_1 < T$ and  $U \in S(K_0, \epsilon_0, \alpha_0, A, \delta_0, C_0,\eta_0,t)= S(t) $ for all  $t \in [0, t_1]$ with initial data $U_0$ introduced as in \eqref{c4defini-initial-data} for $|d_1|, |d_2| \leq 2$. Then,  $G$, defined as in \eqref{c4defini-N-term} satisfies 
$$ \|G\|_{L^\infty (\R^N)}  \leq e^{-\eta s}, \forall s \in [-\ln T, \ln(T-t_1)],$$
for some $\eta >0$ and small.
\end{lemma}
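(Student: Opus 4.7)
\medskip
\noindent\textbf{Proof proposal for Lemma \ref{size-G}.}

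I would split $G = G_1 + G_2$ where $G_1 := \bigl(\frac{1}{p-1}\frac{\bar\theta'(s)}{\bar\theta(s)} - e^{-s}\bigr)(q+\varphi)$ and $G_2 := F(w,W)$, and bound each summand by an exponentially small quantity in $s$. For $G_1$, the key observation is the chain rule: since $t = T - e^{-s}$, we have $\bar\theta'(s) = \theta'(t)\cdot e^{-s} = \theta'(t)(T-t)$. Applying estimate \eqref{bound-bar-theta-'} from Proposition \ref{propo-bar-mu-bounded}, $|\bar\theta'(s)| \le (T-t)^{\varepsilon} = e^{-\varepsilon s}$, and combined with the lower bound \eqref{bound-bar-theta} on $\bar\theta(s)$ this gives $|\bar\theta'(s)/\bar\theta(s)| \le C\, e^{-\varepsilon s}$. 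The term $e^{-s} = T-t$ is trivially exponentially small. Since $\varphi$ is uniformly bounded by its explicit expression \eqref{defi-varphi} and $q$ is uniformly bounded by Lemma \ref{lemma-properties-V-A-s} (in fact $\|q(\cdot,s)\|_{L^\infty} \le C(K_0)A^2/\sqrt{s}$), the factor $q+\varphi$ is bounded by a constant. Hence $\|G_1\|_{L^\infty} \le C\,e^{-\varepsilon s}$.

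For $G_2 = F(w,W)$, I would exploit the support structure of the cutoff $\psi_{M_0}(y,s) = \chi_0(M_0 y e^{-s/2})$. The derivatives $\partial_s \psi_{M_0}$, $\nabla \psi_{M_0}$, $\Delta \psi_{M_0}$ are all supported in $\{1 \le M_0 |y| e^{-s/2} \le 2\}$, which in the original $x$-variable corresponds to $1/M_0 \le |x| \le 2/M_0$. By choosing $\epsilon_7 = \epsilon_7(M_0)$ small enough so that $\epsilon_0 \le 1/(4M_0)$, this support is contained in the regular region $P_3(t)$. On $P_3(t)$, Definition \ref{defini-shrinking-set-S-t}$(iii)$ together with the boundedness of initial data $U_{d_0,d_1}$ from Proposition \ref{c4proposiiton-initial-data} yields a uniform bound $\|U(\cdot,t)\|_{L^\infty(P_3(t))} \le C$. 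Consequently on this support $W(y,s) = (T-t)^{1/(p-1)} U(x,t) \le C e^{-s/(p-1)}$. Each linear term of $F$ (namely $W\partial_s\psi_{M_0}$, $W \Delta \psi_{M_0}$, $W (y\cdot\nabla\psi_{M_0})$, $\nabla \psi_{M_0} \cdot \nabla W$) is therefore bounded by $C(M_0) e^{-s/(p-1)}$, noting that on the support of $\nabla\psi_{M_0}$ the relation $y \cdot \nabla\psi_{M_0}$ is uniformly bounded (the product $M_0 |y| e^{-s/2}$ is bounded by $2$). For the nonlinear part $\psi_{M_0} W^p - w^p = W^p(\psi_{M_0} - \psi_{M_0}^p)$, we similarly get $W^p \le C e^{-ps/(p-1)}$, which is even smaller. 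The gradient term $\nabla \psi_{M_0}\cdot\nabla W$ is handled using the parabolic regularity estimate \eqref{parabolic-estimate-fornabla-i-U} to control $|\nabla W|$, still leading to an exponentially decaying bound.

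Combining the two bounds yields $\|G\|_{L^\infty(\R^N)} \le C(e^{-\varepsilon s} + e^{-s/(p-1)})$. Choosing $\eta \in (0, \min(\varepsilon, 1/(p-1)))$ and absorbing the constant into the exponent for $s$ large (which can be arranged by taking $T \le T_7$ small enough so $s \ge -\ln T$ is large), we obtain the desired bound $\|G\|_{L^\infty} \le e^{-\eta s}$.

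The main obstacle, in my view, is not any single estimate but rather the careful bookkeeping of the parameter hierarchy: one must choose $M_0$, then $\epsilon_7(M_0)$ small enough to push the supports of the derivatives of $\psi_{M_0}$ into $P_3(t)$, then $T_7$ small enough for the exponential decay to absorb the constant prefactors. The genuine novelty with respect to the classical literature \cite{BKnon94,MZdm97,MZnon97} lies in the term $\frac{\bar\theta'(s)}{\bar\theta(s)}(q+\varphi)$, whose smallness is entirely inherited from the sharp estimate \eqref{bound-bar-theta-'} on $\theta'$ established earlier.
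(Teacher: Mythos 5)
Your proof is correct and follows essentially the same route as the paper: both bound the $\bar\theta'/\bar\theta$ term via \eqref{bound-bar-theta-'} from Proposition \ref{propo-bar-mu-bounded} together with the chain rule $\bar\theta'(s)=\theta'(t)(T-t)$ and the uniform boundedness of $w=q+\varphi$, and both control $F(w,W)$ by observing it is supported where the derivatives of the cut-off $\psi_{M_0}$ live, i.e.\ in the regular region. The only difference is that the paper delegates the estimate on $F$ to \cite{DZM3AS19} while you work it out explicitly (correctly, modulo minor imprecision in the exact exponent for $\nabla\psi_{M_0}\cdot\nabla W$, which is immaterial since $\eta$ is taken as the minimum of all exponents appearing).
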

\begin{proof}
We note that  the constants in the hypothesis mainly satisfy the assumptions of Proposition \ref{propo-bar-mu-bounded}, thus 
$$ |\theta'_t(t)| \leq   C (T-t)^{-1 + \epsilon }, $$
and so we obtain
$$ |\bar \theta'_s (s)| \leq e^{-\epsilon s}.$$
Now, we rewrite $G$ as follows
$$ G = \left(\frac{1}{p-1} \frac{\bar\theta'(s)}{\bar \theta(s)} - e^{s} \right) w + F(w,W),$$
and thus it directly  follows 
$$ \left|   \left(\frac{1}{p-1} \frac{\bar\theta'(s)}{\bar \theta(s)} - e^{s} \right) w\right| \leq C e^{-\min(1,\epsilon) s}.$$
In particular, the nonlinear term $F$ is similar to the one treated in \cite{DZM3AS19} and thus one derives
$$ \|F\|_{L^\infty} \leq C e^{- \epsilon_1 s}. $$
Finally we arrive at the desired estimate for $G$ for $ \eta = \min(1,\epsilon,\epsilon_1)$. \end{proof}

\bibliographystyle{amsalpha}
\bibliography{main}

\end{document}